\newcommand{\EM}[1]{\textcolor{Green}{#1}}
\newcommand{\R}{\mathbb{R}}
\newcommand{\X}{\mathcal{X}}
\newcommand{\y}{\boldsymbol{y}}
\newcommand{\x}{\boldsymbol{x}}
\newcommand{\z}{\boldsymbol{z}}
\newcommand{\e}{\boldsymbol{e}}
\newcommand{\w}{\boldsymbol{w}}
\newcommand{\K}{\boldsymbol{K}}
\newcommand{\D}{\boldsymbol{D}}
\newcommand{\M}{\boldsymbol{M}}
\newcommand{\I}{\boldsymbol{I}}
\newtheorem{theorem}{Theorem}
\newtheorem{proposition}{Proposition}
\newtheorem{lemma}{Lemma}
\newtheorem{corollary}{Corollary}
\numberwithin{equation}{section}
\begin{document}
\title{Adaptive Weighted Total Variation boosted by learning techniques in few-view tomographic imaging}

\author{Elena Morotti \\
Department of Political and Social Sciences, \\ University of Bologna,\\ Bologna, 40126, Italy. \\ \texttt{elena.morotti4@unibo.it}. \\
\And Davide Evangelista \\
Department of Computer Science and Engineering, \\ University of Bologna,\\ Bologna, 40126, Italy. \\
\texttt{davide.evangelista5@unibo.it} \\
\And Andrea Sebastiani \\
Department of Physics, Informatics and Mathematics, \\ University of
Modena and Reggio Emilia, \\ Modena, 41125, Italy. \\ \texttt{asebastiani@unimore.it}. \\
\And Elena Loli Piccolomini \\
Department of Computer Science and Engineering, \\ University of Bologna,\\ Bologna, 40126, Italy. \\ \texttt{elena.loli@unibo.it} \\}

\maketitle

\begin{abstract}
This study presents the development of a spatially adaptive weighting strategy for Total Variation regularization, aimed at addressing under-determined linear inverse problems. The method leverages the rapid computation of an accurate approximation of the true image (or its gradient magnitude) through a neural network. Our approach operates without requiring prior knowledge of the noise intensity in the data and avoids the iterative recomputation of weights.\\
Additionally, the paper includes a theoretical analysis of the proposed method, establishing its validity as a regularization approach.  
This framework integrates advanced neural network capabilities within a regularization context, thereby making the results of the networks interpretable. The results are promising as they enable high-quality reconstructions from limited-view tomographic measurements.
\end{abstract}
\vspace{2pc}
\noindent{\it Keywords\/}: Weighted Total Variation, Spatially Adaptive Regularization, Few-view  Tomography, Neural Networks.


\section{Introduction}

In this work, we investigate an imaging application aimed at reconstructing medical images from a limited number of tomographic measurements. This application is of particular interest due to the need to minimize radiation exposure, which poses health risks to patients, as well as to reduce data acquisition times.  \\
In a discrete framework, this problem can be formulated as a linear inverse problem of the form:
	\begin{equation}\label{eq:inverse_problem}
		\y^\delta = \K \x^{GT} + \e, \quad || \e ||_2 \leq \delta,
	\end{equation}
where $\x^{GT} \in \X \subseteq \R^n$ is the (unknown) ground truth image (reshaped into a one-dimensional array), $\K \in \R^{m \times n}$ is an under-sampled, i.e.  $m \leq n$, linear projecting operator,  $\y^\delta \in \R^m$ is the acquired noisy sinogram, and $\e \in \R^m$ is the noise, whose norm is bounded by $\delta$. 
Note that, since $m \leq n$, the kernel of $\K$ is generally non-trivial, which implies that \eqref{eq:inverse_problem} admits infinite solutions.  Our investigation is specifically focused on the context of X-ray fan beam Computed Tomography (CT). In this setting, the operator $\K$ is derived from the discretization of the two-dimensional 
fan-beam Radon integral transform. Consequently, the inverse problem \eqref{eq:inverse_problem} necessitates the application of regularization techniques to guarantee a stable and reliable solution.
Since medical images often contain critical information in the form of low-contrast structures and boundaries between different anatomical tissues,  a widely used  regularization function is the popular isotropic Total Variation  ($TV$) operator \cite{rudin1992nonlinear}, defined as:
\begin{align}\label{eq:TV_definition}
		TV(\x) = \sum_{i=1}^n \sqrt{\left( \D_h \x \right)_i^2 + \left( \D_v \x \right)_i^2}.
\end{align}
where $\D_h, \D_v \in \R^{n \times n}$ are  discrete  difference operators associated with the horizontal and vertical derivatives, respectively.
TV is particularly effective in preserving edges, as it penalizes pixel intensity variations by promoting sparsity in the gradient domain, thereby maintaining sharp transitions \cite{sidky2014cttpv, piccolomini2021model, friot2022iterative,chan2020two}.
By employing TV  regularization, the reconstructed image is obtained as the solution of the following minimization problem over a suitable domain $\X \subseteq \R^n$:
\begin{align}\label{eq:minimization}
    \min_{\x \in \X} \mathcal{J}_\delta(\x)  :=  \|\K\x-\y^\delta\|_2^2+ \lambda TV(\x)
\end{align}
where the scalar $\lambda>0$ controls the contribution of the regularization term over the least-squares fidelity one, in the overall objective function $\mathcal{J}_\delta(\x)$.

In traditional regularized approaches, regularization is applied uniformly to the whole image, i.e. its formulation equally weights the contributions across the pixels. We refer to this case as {\em global regularization}. 
Global regularization is simple to implement and computationally low demanding; 
on the other hand, it treats the entire image as a homogeneous entity, making the setting of the optimal weight $\lambda$ not trivial conceptually because a value may not adapt well to varying characteristics within the image.
For instance, a considerably high value of $\lambda$ may effectively denoise the resulting image within flat regions, yet it could excessively smooth it, potentially leading to the loss or blurring of crucial details in certain areas. Conversely, a slightly lower value may effectively preserve fine details but might be insufficient in adequately suppressing noise in homogeneous regions.
A well-known drawback of considering the global TV regularization is the potential introduction of piecewise constant artifacts, which could impact the subsequent medical interpretation. It is particularly noticeable in regions with smooth intensity variations, as TV regularization tends to oversimplify these areas, resulting in blocky or stairstep artifacts. 

To improve the flexibility of regularization and achieve a more effective trade-off between denoising and preserving fine details, a promising approach is {\em space-variant regularization}, which involves weighting the regularization function pixel-wise. For this reason, this strategy is also referred to as {\em adaptive weighted regularization}. This approach, which adjusts the regularization strength according to local image properties, enhances the preservation of fine details and edges while sustaining effective noise reduction. However, a major challenge in the implementation of space-variant regularization is the selection of suitable local regularization parameters (also referred to as adaptive weights or space-variant parameters). 
The concept of local regularization parameters has been extensively explored in the literature. Here, we highlight a few key references, directing readers to them for further insights. 
A possible approach involves estimating the parameters based on the noise variance in different regions of the input image.
In a recent review by \cite{pragliola2023and}, the authors demonstrate the efficacy of space-variant TV-based regularization with weights derived from a Bayesian interpretation of the model.
In other studies, these weights are determined through various approaches, including estimating the noise variance \cite{dong2011automated, hintermuller2017optimalII, bortolotti2016uniform, cascarano2023constrained,kan2021pnkh}, analyzing image scaling properties \cite{grasmair2009locally, chen2010adaptive}.
More recently, several data-driven strategies have been proposed to estimate these parameters for a specific space-variant regularizer, or jointly with the regularizer itself, by training the models in different modalities \cite{bubba2022bilevel, cuomo2023speckle, kofler2023learning, pourya2024iteratively}.

All the previous references employ regularization techniques to address denoising and deblurring problems, wherein the observed image (datum) and the reconstructed image (solution) belong to the same space. However, our application differs in that the data (sinogram) resides in a lower-dimensional space of the solution; thus, the noise present in the sinogram differs significantly from the noise inherent in the reconstructed image. Indeed, streaking artifacts also affect the reconstructions, originating from the sparsity of the views.
 In this context, space-variant parameters are usually updated at each iteration of the iterative method employed to solve the problem \cite{sidky2014cttpv,huang2018scale,xi2023adaptive,luo2018adaptive}. This technique stems from the {\it iterative reweighted}  approach \cite{candes2008enhancing, daubechies2010iteratively}, commonly employed to tackle problems involving non-convex regularizers approximated by convex functions. However, to theoretically ensure global convergence, the iterative reweighting strategy necessitates computing the weights at each iteration via an additional nested minimization problem \cite{lazzaro2019nonconvex}. In practical applications, particularly in tomography, this step is often omitted to reduce computational time, thereby violating the assumptions needed for the convergence results.

\paragraph{Contributions.}
In this study, we propose an adaptive weighted total variation algorithm for tomographic image reconstruction from sparsely sampled views.
Unlike the reweighted approaches discussed in the aforementioned works, we do not update the coefficients throughout the iterative process, but we determine and fix them from the outset. This strategy enables the analysis of the convergence and of the regularization properties of the proposed model.  Indeed, we perform a theoretical analysis demonstrating that the proposed method qualifies as a well-posed regularization approach, according to \cite{scherzer2009variational}. 
\\
In addition, as it is not possible to leverage knowledge of noise in the data to estimate the adaptivity of the regularization because image and data lie in different domains in CT, the space-variant coefficients are not computed based on a noise estimation. Instead, they are determined using an intermediate image computed as a coarse and fast reconstruction.
The core of our proposal is a framework based on a neural network to compute this intermediate image. The network can be trained employing two alternative loss functions, to match either the image directly or the magnitude of its gradient.
Specifically, we demonstrate that if the intermediate image (or the magnitude of its gradient) accurately approximates the exact image (or the magnitude of its gradient, respectively), then the resulting adaptive weights lead to a highly accurate final reconstruction.\\
Finally, extensive numerical simulations confirm the effectiveness of the proposed method, even when compared to state-of-the-art approaches.

\paragraph {Structure of the paper.}

The structure of the paper is as follows: Section \ref{sec:model} introduces the proposed weighted Total Variation model, details the selection of adaptive weights, presents a preliminary experiment for validation and describes the proposed data driven approach to compute the weights. Section \ref{sec:wellposedness} provides a theoretical analysis of the proposed method. Section \ref{sec:numres} presents experimental results on both synthetic and real datasets and the conclusions are summarized in Section \ref{sec:concl}.
At last, Appendix \ref{app:proofs} derives preliminary results for the proofs of the theorems discussed in Section \ref{sec:wellposedness}.

\section{The proposed adaptive weighted TV model \label{sec:model}}

In this section, we define the adaptive weighted TV  model that we propose to investigate in this paper.
It is expressed as:
\begin{align}\label{eq:minimization_w}
    \min_{\x \in \X}  \|\K\x-\y^\delta\|_2^2+ \lambda TV_{\w}(\x)
\end{align}
where we consider  $\X$ as the  non-negative subspace of $\R^n$, and the weighted TV function $TV_{\w}(\x)$  is defined as:
	\begin{equation}\label{eq:TVweighted}
		TV_{\w}(\x) := \sum_{i=1}^n \w_i \sqrt{\left( \D_h \x \right)_i^2 + \left( \D_v \x \right)_i^2} = || \w \odot | \D \x | ||_1,
	\end{equation}
with $\w = (\w_1, \dots, \w_n) \in \R^n$  the vector of weights, and $\odot$  the element-wise product.
Recalling the notation already introduced in equation \eqref{eq:TV_definition}, we denote as
$| \D \x | \in \R^n$ is the gradient-magnitude image of $\x$, defined as:
	\begin{align}\label{eq:gradient_magnitude}
		\left( | \D \x | \right)_i =\sqrt{\left( \D_h \x \right)_i^2 + \left( \D_v \x \right)_i^2},
	\end{align}
whereas the 2-dimensional discrete gradient operator $\D: \R^n \to \R^{2n}$ is set in the following as: 
\begin{align}\label{eq:Dx_definition}
    \D\x = \begin{bmatrix}
        \D_h \x \\ \D_v \x
    \end{bmatrix} \in \R^{2n}.
\end{align}
With a slight abuse of notation, from \eqref{eq:Dx_definition}, it holds that $(\D\x)_i = (\D_h\x)_i$ for $i = 1, \dots, n$, while $(\D\x)_i = (\D_v\x)_{i-n}$ for $i=n+1, \dots, 2n$.



The selection of the weights $\w$ in the space-variant TV model \eqref{eq:TVweighted}  is a highly delicate and critical process.
Ideally, $\w_i$ should be small if the corresponding pixel $\x_i$ of the image to be restored lies either close to an edge of a low-contrast region or on a small detail, so that the regularization strength on that pixel is moderate and the fidelity term recovers the information provided in the data $\y^\delta$. 
Similarly, to maximize the regularization effect, $\w_i$ should be high on large and uniform regions of $\x$, to suppress the streaking artifacts and to avoid the noise propagation.\\
To appropriately select the weights, we build upon the iterative reweighting 
\(\ell_1\)-norm strategy \cite{candes2008enhancing,daubechies2010iteratively},  which is widely employed for solving Total \(p\)-norm Variation (TpV) problems, with $0<p<1$.
As specified in the Introduction, in iterative reweighting techniques, the parameters change at each iteration of the optimization solver. For instance, in \cite{sidky2014cttpv}, the weights are chosen as:
 \begin{equation}\label{eq:weights_ir}
   \w({\x^{(k)}}) :=  \left( \frac{\eta}{\sqrt{\eta^2 + | \D \x^{(k)}} |^2} \right)^{1 - p},
 \end{equation}
where the non-negative parameter $\eta$ prevents numerical instabilities, and $k$ refers to the current iteration number.
Inspired by this rule, we suppose to have an image $\tilde{\x}$ obtained as a coarse reconstruction from the data $\y^{\delta}$ and we construct the weight matrix before running the iterative process as:
	\begin{equation}\label{eq:OurWeights}
		\w(\tilde{\x}) :=  \left( \frac{\eta}{\sqrt{\eta^2 + | \D \tilde{\x} |^2}} \right)^{1 - p},
	\end{equation}
with a suitable parameter $\eta>0$ and a fixed value of $0<p<1$. \\
The following statements serve to elucidate the conceptual underpinnings of our weight parameters, formally.

\begin{proposition}\label{prop:w_is_a_scale_term}
    For any $\eta > 0$, any $p\in(0,1)$, and any $\tilde{\x} \in \R^n$, 
    $\left(w(\tilde{\x})\right)_i \in (0, 1]$,  $\forall i = 1, \dots, n$. Moreover, $\left(\w(\tilde{\x})\right)_i = 1$ on a pixel $i \in 1, \dots, n$ if and only if $(| \D \tilde{\x} |)_i = 0$.
\end{proposition}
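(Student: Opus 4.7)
The plan is to work pixel by pixel, since the weight formula \eqref{eq:OurWeights} is defined componentwise. First I would fix an index $i \in \{1, \dots, n\}$ and set $t_i := (|\D\tilde{\x}|)_i$. By the definition in \eqref{eq:gradient_magnitude}, $t_i \geq 0$, so the quantity inside the parentheses in \eqref{eq:OurWeights} is the ratio $\eta / \sqrt{\eta^2 + t_i^2}$ with $\eta > 0$.

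Next I would establish the two-sided bound on this ratio. Since $t_i^2 \geq 0$ and $\eta > 0$, we have $\eta^2 \leq \eta^2 + t_i^2$, which on taking square roots (both sides strictly positive) yields $0 < \eta \leq \sqrt{\eta^2 + t_i^2}$, and hence $0 < \eta / \sqrt{\eta^2 + t_i^2} \leq 1$. Because $p \in (0,1)$ the exponent $1 - p$ lies in $(0,1)$ and in particular is positive, so the map $s \mapsto s^{1-p}$ is strictly increasing on $(0, \infty)$ and fixes the endpoints $0$ and $1$. Applying it to the ratio preserves both the strict positivity and the upper bound by $1$, giving $(\w(\tilde{\x}))_i \in (0, 1]$.

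For the second claim, I would chase the equality case backwards through the same chain. Strict monotonicity of $s \mapsto s^{1-p}$ means $(\w(\tilde{\x}))_i = 1$ if and only if $\eta / \sqrt{\eta^2 + t_i^2} = 1$, equivalently $\sqrt{\eta^2 + t_i^2} = \eta$, i.e. $t_i^2 = 0$, i.e. $t_i = (|\D\tilde{\x}|)_i = 0$. There is no genuine obstacle here: the proposition is essentially a direct consequence of the algebraic form of \eqref{eq:OurWeights}, and the only care needed is to note that $\eta > 0$ ensures the denominator never vanishes (so $(\w(\tilde{\x}))_i$ is well-defined and strictly positive) and that $1 - p > 0$ preserves the direction of the inequalities rather than reversing them.
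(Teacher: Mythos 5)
Your proof is correct and follows essentially the same elementary argument as the paper: positivity of the ratio from $\eta > 0$, the bound $\eta \leq \sqrt{\eta^2 + (|\D\tilde{\x}|)_i^2}$, and the fact that the exponent $1-p$ is positive so the bounds and the equality case carry through. If anything, your explicit strict-monotonicity chase of the equality case makes the ``if and only if'' slightly more transparent than the paper's phrasing, but the substance is identical.
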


\begin{proof}
    Note that $\left(\w(\tilde{\x})\right)_i>0$ since it is the ratio of strictly positive quantities. In addition,  it can be observed that $\left(\w(\tilde{\x})\right)_i$ is a decreasing function of $(| \D \tilde{\x} |)_i$ since $p\in(0,1)$ and its exponent is $1 - p > 0 $. This ensures that its maximum is $1$ and it is attained for $(| \D \tilde{\x} |)_i = 0$ .
\end{proof}
We observe that when $(| \D \tilde{\x} |)_i = 0$ (i.e., when $\tilde{\x}_i$ lies within a flat region), it follows that $ \left( \w(\tilde{\x}) \right)_i = 1 $, resulting in a local component of the TV being precisely weighted by $\lambda$. Conversely, when $(| \D \tilde{\x} |)_i \gg 0 $ (indicating that pixel $i$ is located at an edge or corresponds to a fine detail), $\left( \w(\tilde{\x}) \right)_i < 1$. This implies that the local contribute in the TV regularization is reduced, thereby allowing the preservation of fine details. Figure \ref{fig:eta_plot} depicts  the value of $\left(\w(\tilde{\x})\right)_i$  as a function of $(| \D \tilde{\x} |)_i$, for different values of $\eta > 0$. 
It is evident that for very small values of $\eta$, $\left(\w(\tilde{\x})\right)_i$ rapidly decreases to zero (i.e., the solution of the problem is not regularized around pixel $i$). On the other side, for large values of $\eta$, $\left(\w(\tilde{\x})\right)_i$ decreases very slowly as a function of $ (| \D \tilde{\x} |)_i$, which implies that small details will be flattened out.

\begin{figure}[ht]
    \centering
    \includegraphics[width=0.6\linewidth]{./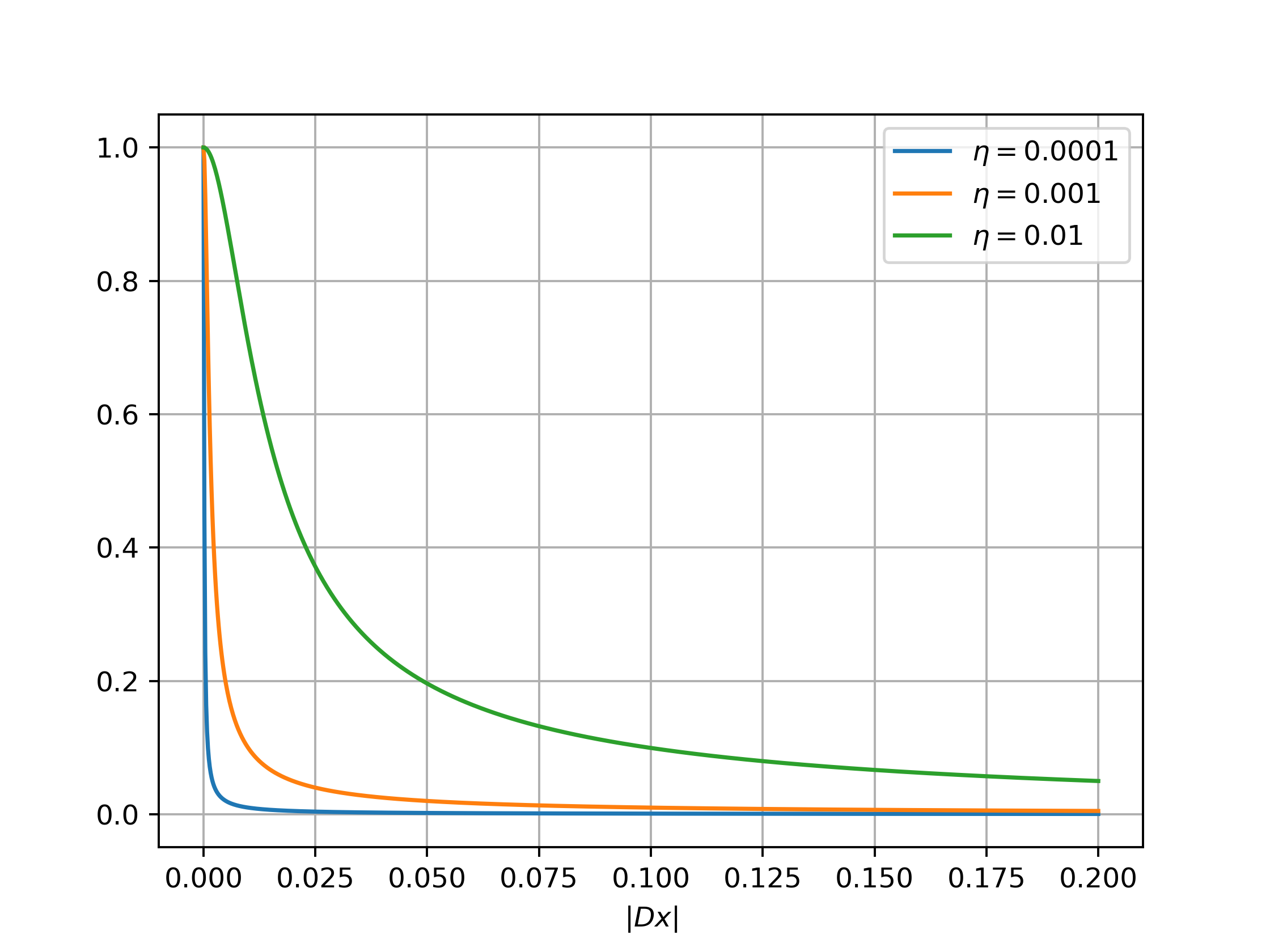}
    \caption{A plot of $\left(\w(\tilde{\x})\right)_i$ for different values of $\eta$, over $ (| \D \tilde{\x} |)_i$, \EM{for $p=0.3$}.}
    \label{fig:eta_plot}
\end{figure}

To provide a more precise specification on how to derive $\Tilde \x$ from $\y^{\delta}$  we consider $\Psi: \mathbb{R}^m \rightarrow \mathbb{R}^n$ as a Lipschitz-continuous function that maps $\y^{\delta}$ to an approximate reconstruction $\tilde{\x} = \Psi(\y^{\delta})$ of $\x^{GT}$. In accordance with the nomenclature introduced in \cite{evangelista2022or}, we denote such mappings as {\it reconstructors}, and we name as $Rec_{m, n}$ the set of such reconstructors.\\
In the following, we introduce a specific variant of \eqref{eq:minimization_w} where the weights are chosen according to \eqref{eq:OurWeights} relying on the reconstruction $\tilde{\x}=\Psi(\y^{\delta})$. For this reason,  we refer to this model as $\Psi$-W$\ell_1$ to remark its reliance on $\Psi$ in the weighted $\ell_1$-norm regularization. 
Formally, it reads:

\begin{equation}\label{eq:Psi_Wl1_formulation}
    \x^*_{\Psi, \delta} \in \arg\min_{\x \in \X} \mathcal{J}_{\Psi, \delta}(\x) := || \K \x - \y^\delta ||_2^2 + \lambda || \w(\Psi(\y^\delta)) \odot | \D \x | \> ||_1.
\end{equation}
In the following we will denote the regularization term of \eqref{eq:Psi_Wl1_formulation} as
$\mathcal{R}_{\Psi, \delta}(\x)$.\\
To address the optimization problem \eqref{eq:minimization_w}, we employ the Chambolle-Pock (CP) algorithm \cite{chambolle2011first}, a widely used iterative method. This approach is particularly popular for solving $\text{TV}$-regularized inverse problems, including applications in few-view CT reconstruction \cite{sidky2014cttpv, piccolomini2021model}. 
In its original formulation, the Chambolle-Pock method \cite{chambolle2011first}  was introduced to minimize an objective function of the form:
\begin{equation}\label{eq:CP_general_formulation}
    \min_{\x \in \R^n} F(\M\x) + G(\x),
\end{equation}
where both $F$ and $G$ are real-valued, proper, convex, lower semi-continuous functions and $\M$ is a linear operator from $\R^n$ to $\R^s$. 
Note that there are no constraints on the smoothness of either $F$ and $G$; therefore, the method can be applied to our problem by setting:
\begin{align}\label{eq:ourproblem}
    \begin{cases}
        G(\x) = \iota_{\X}(\x), \\
        F(\M\x) = \mathcal{J}_{\Psi, \delta}(\x) = \frac{1}{2} || \K\x - \y^\delta ||_2^2 + \lambda || \w(\Psi(\y^\delta)) \odot | \D \x | ||_1,
    \end{cases}
\end{align}
where $\iota_{\X}(\x)$ is the indicator function of the feasible set $\X$. As already stated, we consider $\X$ as the non-negative subspace of $\R^n$.\\
To apply the CP method to our problem, we define the linear operator $\M \in \R^{s \times n}$  by concatenating row-wise $\K$ and $\D$, namely $\M = \left[ \K; \D \right]$ and $s=(m + 2n)$. The CP algorithm considers the primal-dual formulation of \eqref{eq:CP_general_formulation}, which reads:
\begin{align}
    \min_{\x \in \R^n} \max_{\z \in \R^{m+2n} } \z^T \M \x + G(\x) - F^*(\z),
\end{align}
where $F^*$ is the convex conjugate of $F$ \cite{bauschke2017correction}, defined as:
\begin{align}
    F^*(\z^*) := \sup_{\z \in \R^{m+2n} } \left\{ \z^T \z^* - F(\z) \right\}.
\end{align} 
More details on the CP implementation can be found in \cite{sidky2014cttpv}. 

We consider here different possible choices to define the reconstructor $\Psi(\y^\delta)$ and subsequently compute  $\tilde{\x}$ from the undersampled sinogram $\y^\delta$.
Dealing with tomographic imaging from  X-ray measurements, a simple yet fast reconstructor is the Filtered Back Projection (FBP) solver \cite{kak2001principles}. As an analytical algorithm, it operates by applying a filter to the back-projected data, disregarding the inverse problem formulation of the imaging task. Even if FBP may not address all reconstruction challenges, its speed and simplicity make it a pragmatic choice for many medical imaging scenarios, particularly when balancing computational resources and reconstruction quality is essential. We denote the resulting method as  FBP-$W\ell_1$.
A further option is setting $\Psi(\y^{\delta})$ as an early-stopped iterative scheme which minimizes a variational problem.  We still consider the global TV regularization \eqref{eq:TV_definition} and stop the iterative  CP algorithm for the solution  of the corresponding problem \eqref{eq:minimization} after very few iterations. We denote the resulting method as  TV-$W\ell_1$.
Figure \ref{sfig:GrAbstract1} illustrates the proposed framework, highlighting that the weights are computed based on an intermediate image, $\tilde{\x}$.

\begin{figure}[ht]
\centering
\includegraphics[trim=0 45mm 0 0,clip, width=0.8\textwidth]{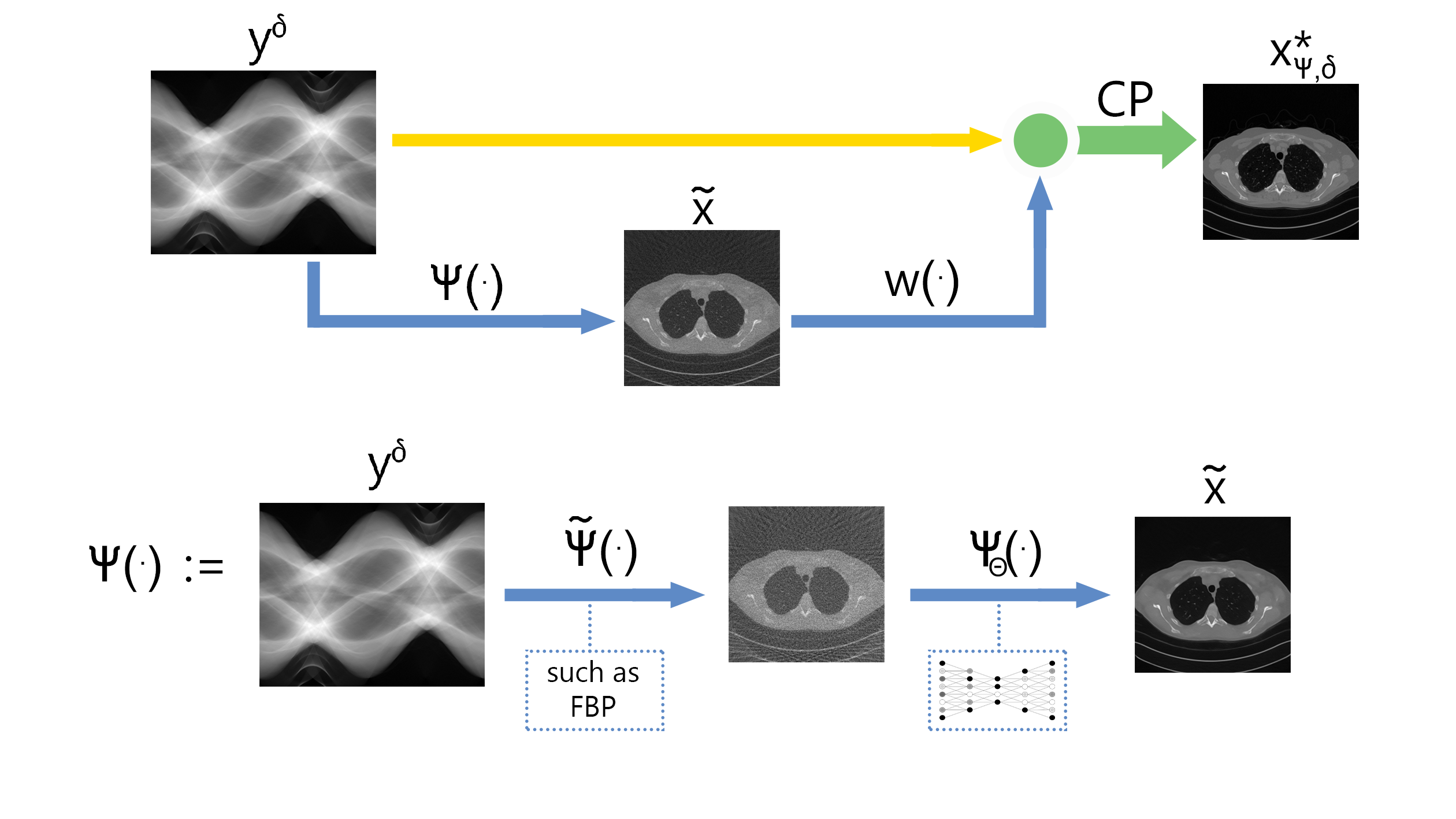}
\caption{ Workflow of the considered scheme, where the reconstructor $\Psi$ is used to achieve a useful image $\tilde{\x} = \Psi(\y^\delta)$ for adaptive $\ell_1$ regularization.} \label{sfig:GrAbstract1}
\end{figure}

\subsection{Experiments on a synthetic image}\label{ssec:Sintetica}

To analyze the behavior of the proposed weighting scheme for different choices of $\tilde{\x}$, we conduct a simple experiment involving sparse CT reconstruction using a synthetic image, shown as the first image in Figure \ref{fig:results_Sint02}. We denote as $\x^{GT}$ this synthetic ground truth  which incorporates structures commonly encountered in tomographic imaging: homogeneous regions with regular shapes simulating tumoral masses, high-density areas representing bones or metallic implants, and objects with extremely thin edges.\\
The corrupted measurement $\y^{\delta}$ is then computed by simulating projections from the ground truth image $\x^{GT}$, according to the image formation model in Equation \eqref{eq:inverse_problem} using a fan-beam scanning geometry with only 45 projections uniformly distributed over the angular range $[0, 180]$. The noise $\e$ is obtained by sampling $\z \sim \mathcal{N}(\boldsymbol{0}, \I)$ and computing:
\begin{align*}
    \e = \nu \frac{|| \y ||_2}{|| \z ||_2}\z,
\end{align*}
where we indicated with $\y$ the noiseless sinogram. Note that this formulation is equivalent to \eqref{eq:inverse_problem}, with $\delta = \nu || \y ||_2$.\\
We analyze the reconstructions by means of metrics such as the Relative Error (RE), the Peak Signal to Noise Ratio (PSNR) and the Structural Similarity Index (SSIM) defined in \cite{wang2004image},  with respect to the ground truth image.\\ 
We conducted two simulations with different noise intensity, setting $\nu=0.005$ and $\nu=0.02$. For all the experiments, we have chosen \(\eta = 2 \cdot 10^{-5}\).
Regarding the regularization parameter $\lambda$, we have assigned consistent heuristic values across all weighted methods.
To validate the selections of the space-variant parameters, we compare the results obtained using the proposed adaptive weighted TV model \eqref{eq:TVweighted} method with those derived from global TV regularization \eqref{eq:minimization} (the value of the regularization parameter $\lambda$  has been adequately set {\em ad hoc} for the global TV). 
We briefly present the results of this simulation, which firstly demonstrate the effectiveness of the proposed weights and subsequently confirm that setting \(\tilde{\x} = \x^{GT}\) yields a highly accurate reconstructed image. \\
Table \ref{tab:Sintetica} presents the metrics for the intermediate image $\tilde{\x}$ and the final reconstructed image $\x^*_{\Psi,\delta}$. We denote as GT-$W\ell_1$ the adaptive weighted TV  method where $\tilde{\x} = \x^{GT}$. When compared to the global TV, the results underscore that the proposed adaptive regularization method consistently produces high-quality reconstructions, as indicated by SSIM values exceeding 0.99. Furthermore, the analysis of the relative errors and PSNR values demonstrates superior performance when the intermediate reconstruction is set as $\tilde{\x} = \x^{GT}$.

\begin{table}[h]
\caption{Performance results on the synthetic image with different noise levels on the sinogram ($\nu=0.005$ and $\nu=0.02$). In the first three columns, the metrics are relative to the image $\tilde{\x}$, and in the last three columns the metrics are relative to the output image $\x^*_{\Psi,\delta}$.}\label{tab:Sintetica}%
\resizebox{\textwidth}{!}{%
\begin{tabular*}{\textwidth}{@{\extracolsep\fill}ll rrr rrr}
\toprule
 &   & \multicolumn{3}{@{}c@{}}{ $\tilde{\x} $} & \multicolumn{3}{@{}c@{}}{$\x^*_{\Psi,\delta}$}\\
  &         & RE & PSNR   & SSIM          & RE & PSNR   & SSIM\\
\cmidrule{3-5} \cmidrule{6-8}
\midrule
\multirow{ 4}{*}{$\nu=0.005$} 
   & GT-$W\ell_1$  & 0.0000   & 100.00  & 1.0000            & 0.0217 & 46.9881 & 0.9979  \\
   & FBP-$W\ell_1$   & 0.5056   & 19.6281  & 0.1974           & 0.0373 & 42.2662 & 0.9953  \\
   & TV-$W\ell_1$    & 0.2236   & 26.7167  & 0.7335           & 0.0621 & 37.8454 & 0.9920  \\
   & global TV    & -   & -  & -                    & 0.1236 & 31.8657 & 0.9805  \\
\midrule
\multirow{ 4}{*}{$\nu=0.02$}
   & GT-$W\ell_1$   & 0.0000   & 100.00  & 1.0000            & 0.0398 & 41.7159 & 0.9968  \\
   & FBP-$W\ell_1$    & 0.9694   & 13.9748 & 0.0524            & 0.1025 & 33.4928 & 0.9809  \\
   & TV-$W\ell_1$   & 0.2435 & 25.9754 & 0.5968            & 0.0911 & 34.5162 & 0.9797  \\
   & global TV    & -   & -  & -                     & 0.1249 & 31.7715 & 0.9787  \\
\bottomrule
\end{tabular*}
}
\end{table}

These findings are further confirmed by Figure \ref{fig:results_Sint02}, which presents a cropped view of the restored images in the case $\nu=0.02$. Clear differences in the outputs are noticeable, particularly in the reconstruction of finer cross details and contrast. Notably, the global TV method consistently fails to reconstruct the cross, regardless of the parameter settings. The superior reconstruction performance achieved when $\tilde{\x} = {\x}^{GT}$ highlights the effectiveness of adapting regularization to image pixels when a highly accurate approximation of the target image is available.
Figure \ref{fig:results_Sint02} also includes the plot of the relative error, where the relative position of the curves is maintained across all iterations, consistently with the behavior discussed in Table \ref{tab:Mayo005}. The plots clearly demonstrate that the error stabilizes, suggesting that a stationary point has been achieved for each method.
We specify that the graphs display the executions carried out with 10000 iterations of the Chambolle-Pock method to analyze the behavior until convergence, whereas the images visually show no further changes after a few hundred iterations.

\begin{figure}[ht]
\centering
\subfloat[$\x^{GT}$]{
\begin{tikzpicture}
        \node [anchor=south west, inner sep=0] (image) at (0,0) {\includegraphics[width=0.18\linewidth]{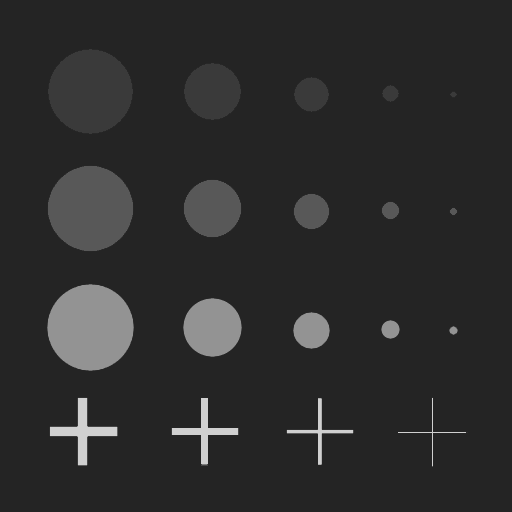}};
        \begin{scope}[x={(image.south east)}, y={(image.north west)}]
            \draw[red, thick] (0.52, 0.03) rectangle (0.97, 0.48);
        \end{scope}
    \end{tikzpicture}
}
\subfloat[GT-$W\ell_1$]{
\includegraphics[trim={65mm 0 0 65mm},clip, width=0.18\linewidth]{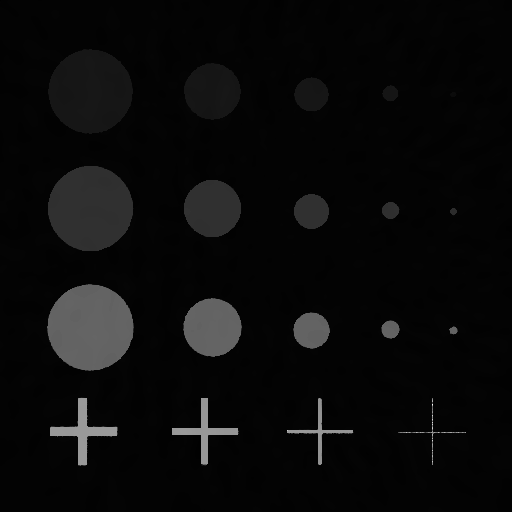}
}
\subfloat[FBP-$W\ell_1$]{
\includegraphics[trim={65mm 0 0 65mm},clip, width=0.18\linewidth]{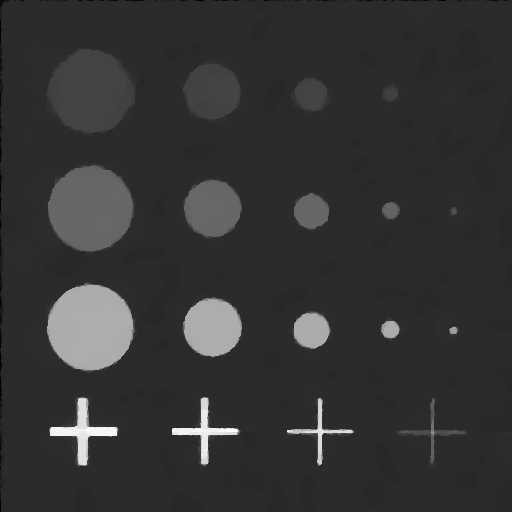}
}
\subfloat[TV-$W\ell_1$]{
\includegraphics[trim={65mm 0 0 65mm},clip, width=0.18\linewidth]{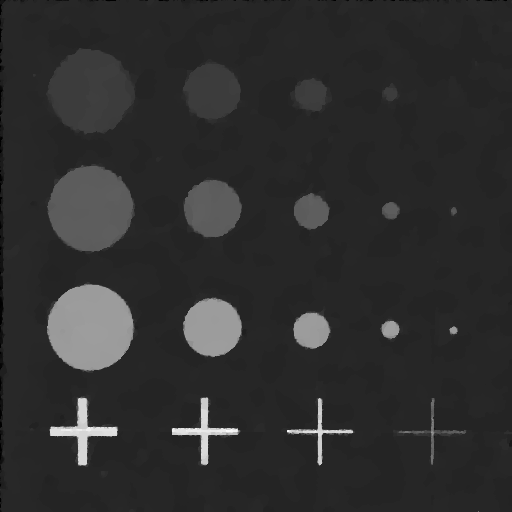}
} 
\subfloat[global TV]{
\includegraphics[trim={65mm 0 0 65mm},clip, width=0.18\linewidth]{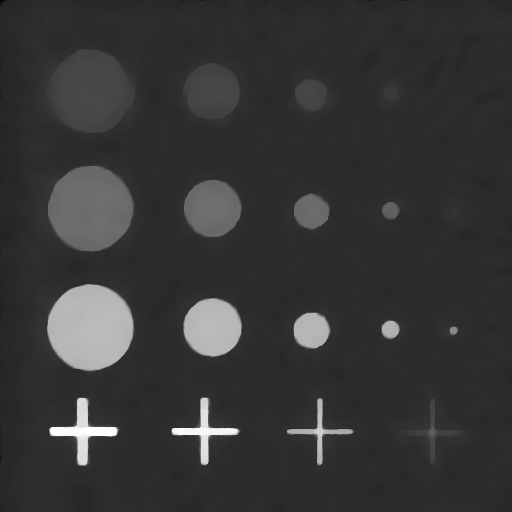} 
}\\
\subfloat[Relative error]{
\includegraphics[width=0.4\linewidth]{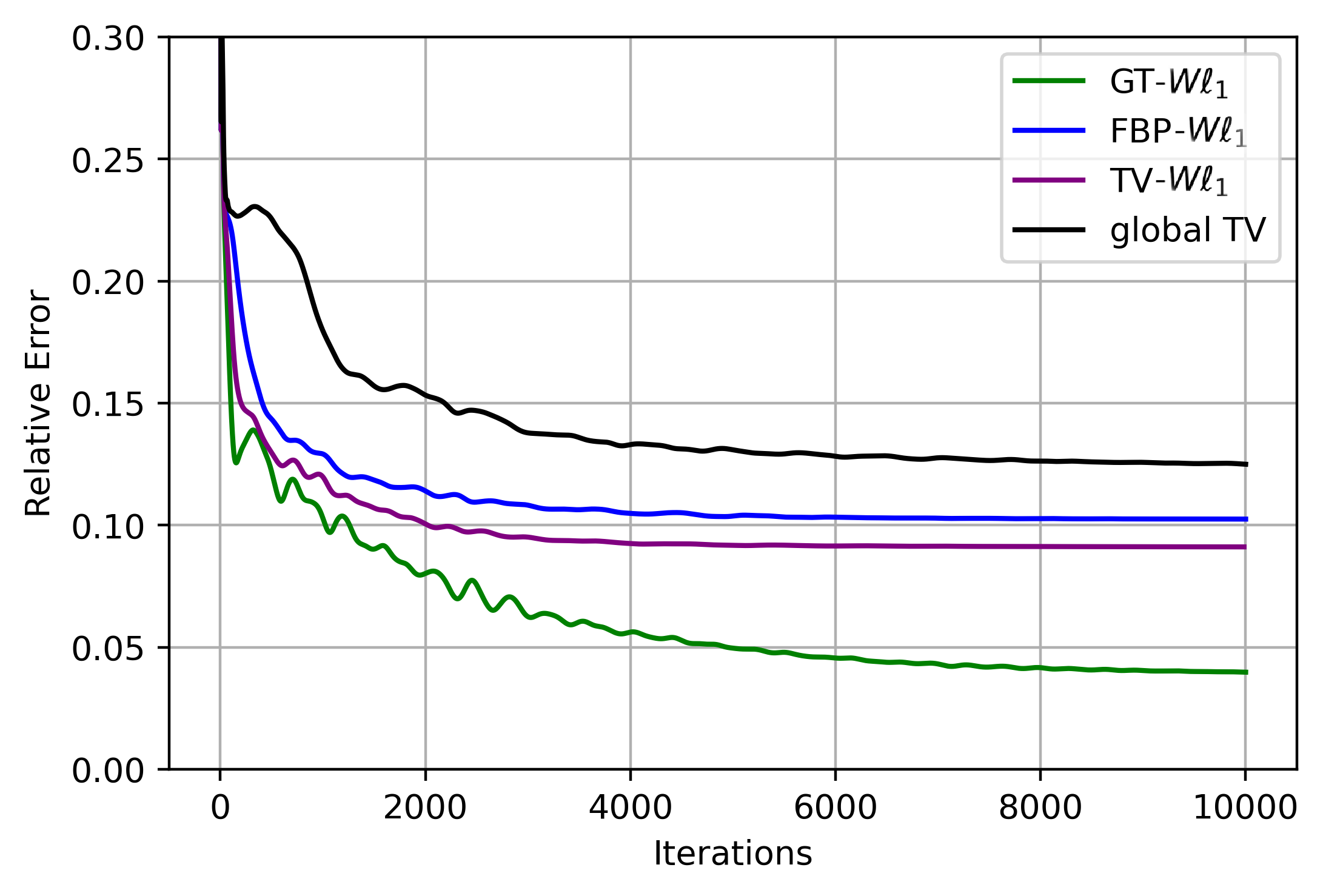}
}
  \caption{Results of the experiment on the synthetic image with higher noise ($\nu=0.02$). 
  In the first row, from left to right: the entire ground truth image with a red square depicting the crop of interest, cropped zooms on the reconstructions by GT-$W\ell_1$, FBP-$W\ell_1$, TV-$W\ell_1$ and by the global TV model.
  In the second row: plot of the Relative Error  over the iterations.}
  \label{fig:results_Sint02}
\end{figure}

\subsection{Weights Computation via a Neural Network \label{ssec:nn}}

We boost now our novel approach for weights computation by exploiting as the intermediate image  $\tilde \x$ the output of a neural network, trained in a supervised learning framework. 
 As the network, we use in our experiments the renowned Residual U-Net architecture as described in \cite{green_post_processing,evangelista2023ambiguity}.
In order to train the neural network, we consider the dataset $\mathcal{D} = \{ (\tilde \Psi(\y_i^{\delta}), \x^{GT}_i) \}_{i=1}^{N_D}$, where $\tilde \Psi(\y_i^{\delta})$ is a fast coarse reconstructor. In our experiments, we will use as $\tilde \Psi$ the FBP algorithm.
Training a neural network results in finding the parameters $\theta^*$ as a result of the minimization of a loss function. 
Motivated by the preliminary experiment in Section \ref{ssec:Sintetica} we consider two distinct loss functions to train our network. The first one, named in the following as image loss, approximates the ground truth images $\x^{GT}_j$, and the parameters $\theta^*$ are computed as:

\begin{equation}\label{eq:mseloss}
        \theta^* \in \arg\min_{\theta} \sum_{j=1}^{N_D} || \x^{GT}_j - \Psi_\theta(\tilde \Psi(\y^\delta_j)) ||_2^2.
    \end{equation}
    
The choice of the second loss function is motivated by the idea of "learning" the image gradients, as these are explicitly used in the computation of the weights, and for this reason it is named gradient loss in the following. In the next section, this choice will also be justified through theoretical results.
In this case the parameters $\theta^*$ are computed as:

    \begin{equation}\label{eq:gradloss}
        \theta^* \in \arg\min_{\theta} \sum_{j=1}^{N_D} || |\D \x^{GT}_j| - |\D \Psi_\theta(\tilde \Psi (\y^\delta_j))| ||_2^2.
    \end{equation}

The resulting scheme considers the reconstructor $\Psi\in Rec_{m, n}$ as the neural network applied to the FBP reconstruction of the data $\y_i^{\delta}, i=1, \ldots N_D$.
We illustrate the proposed framework, that we name as $\Psi$-$W\ell_1$, in Figure \ref{fig:GrAbstract2}.


\begin{figure}
\centering
\includegraphics[trim=0 0 0 45mm,clip, width=0.8\textwidth]{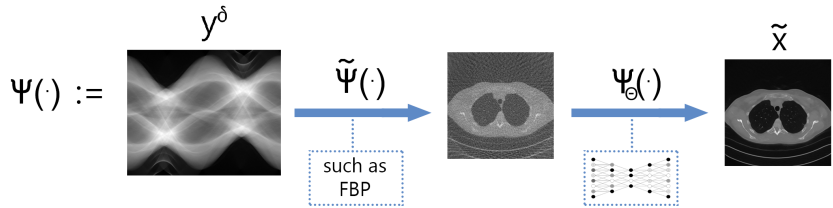}
\caption{The reconstructor $\Psi$ when it is constituted by two steps: a Filtered Back Projection and a neural network.} \label{fig:GrAbstract2}
\end{figure}

\section{On the well-posedness of the proposed regularization method}\label{sec:wellposedness}
In this section, we establish the theoretical foundation of our proposed $\Psi$-$W\ell_1$ approach, as formulated in \eqref{eq:Psi_Wl1_formulation}, by demonstrating that it constitutes a well-posed regularization method. To this end, following the analysis presented in \cite{scherzer2009variational}, we will demonstrate that the $\Psi$-$W\ell_1$ approach satisfies the following properties:

\begin{itemize}
    \item \emph{Existence:} for any  $\lambda > 0$, any reconstructor $\Psi 
    \in Rec_{m, n}$, and any $\delta \geq 0$, the objective function $\mathcal{J}_{\Psi, \delta}(\x)$ defined in \eqref{eq:Psi_Wl1_formulation} admits at least a minimizer $\x^*_{\Psi, \delta}$.
    \item \emph{Uniqueness:} for any   $\lambda > 0$, any $\Psi \in Rec_{m, n}$, and any $\delta \geq 0$, the minimizer $\x^*_{\Psi, \delta}$ of $\mathcal{J}_{\Psi, \delta}(\x)$ is unique,
    \item \emph{Noise Stability:}  given   $\lambda > 0$ and  a reconstructor $\Psi \in Rec_{m, n}$, if $\delta \to 0$, then $\x^*_{\Psi, \delta} \to \x^*_{\Psi, 0}$.
    \item \emph{Reconstructor Stability:}  given  $\lambda > 0$ and $\delta \geq 0$, if $\Psi \approx  \Psi^*$ , then $\x^*_{\Psi, \delta} \to \x^*_{\Psi^*, \delta}$.
\end{itemize}

Our theoretical analysis offers a novel perspective on regularization theory for inverse problems. Unlike traditional approaches, where the regularization term depends solely on the  variable $\x$, we propose a framework where it  also depends on the noisy measurements $\y^\delta$.
Additionally, while the uniqueness of the minimizer under suitable conditions is well-established for anisotropic Total Variation, we extend this result to isotropic Total Variation-regularized inverse problems. This extension arises as a special case of our general uniqueness theorem, where the weights are uniformly set to 1.
Some of the topics discussed are introduced in \cite{scherzer2009variational} in very general settings. They have been reformulated with natural assumptions tailored to the problem considered in this paper.

The main theorems that establish the previously stated four properties are presented and proved in the following.
The supporting lemmas used in the proofs are stated here but their proofs are in Appendix \ref{app:proofs}.\\
We remark that the majority of the theoretical results rely on the following assumption:

 \begin{enumerate}[label=(A.\arabic*), ref=(A.\arabic*)]
    \item \label{assumption:kernels} $\ker(\K) \cap \ker(\D) = \{ \boldsymbol{0} \}$. 
\end{enumerate}
Note that when $\K$ represents the CT projection operator, Assumption \ref{assumption:kernels} is not restrictive. This is because $\ker(\D)$ corresponds to the set of constant images, which cannot lie in the kernel of $\K$, as: 
\begin{align}
    \y_i = (\K \x)_i = \sum_{j \in L_i} \x_j,
\end{align}

where $L_i$ parametrizes the trajectory of the $i$-th X-ray  \cite{bertero2021introduction}. Therefore, $\y_i = 0$ if and only if $\sum_{j \in L_i} \x_j = 0$ which, given that $\x_j \geq 0$ by hypothesis, holds if and only if $\x_j = 0$ for any $j \in L_i$. Moreover, if $\x \in \ker (\D)$ therefore $\x_j = 0$ for any $j = 1, \dots, n$. Consequently, we have demonstrated that $\ker(\K) \cap \ker(\D) = \{ \boldsymbol{0} \}$, satisfying \ref{assumption:kernels}. From now on, we will always assume \ref{assumption:kernels} is satisfied.

\begin{lemma}\label{lemma:kerWD_equal_kerD}
        Let $\boldsymbol{W}_{\Psi, \delta} \in \R^{2n \times 2n}$ be the diagonal matrix having two copies of $\w(\Psi(\y^\delta))$ on the diagonal, so that: 
        \begin{align}
            \w(\Psi(\y^\delta)) \odot | \D \x | = | \boldsymbol{W}_{\Psi, \delta} \D \x |.
        \end{align}
        Then, it is trivial to prove that:
        \begin{align}
            \ker(\boldsymbol{W}_{\Psi, \delta} \D) = \ker(\D).
        \end{align}
    \end{lemma}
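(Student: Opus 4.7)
The plan is to use Proposition \ref{prop:w_is_a_scale_term} to argue that $\boldsymbol{W}_{\Psi,\delta}$ is invertible, and then conclude the kernel equality by a one-line invertibility argument.

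More concretely, I would proceed in two short steps. First, I would establish the trivial inclusion $\ker(\D) \subseteq \ker(\boldsymbol{W}_{\Psi,\delta}\D)$: if $\D\x = \boldsymbol{0}$, then multiplying by the diagonal matrix $\boldsymbol{W}_{\Psi,\delta}$ immediately gives $\boldsymbol{W}_{\Psi,\delta}\D\x = \boldsymbol{0}$. Second, for the reverse inclusion, I would invoke Proposition \ref{prop:w_is_a_scale_term} to note that, for any $i = 1, \dots, n$, the weight $(\w(\Psi(\y^\delta)))_i \in (0, 1]$ is strictly positive. Since $\boldsymbol{W}_{\Psi,\delta}$ is the $2n \times 2n$ diagonal matrix with two copies of $\w(\Psi(\y^\delta))$ on the diagonal, all its diagonal entries are strictly positive and hence $\boldsymbol{W}_{\Psi,\delta}$ is invertible. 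Therefore, if $\boldsymbol{W}_{\Psi,\delta}\D\x = \boldsymbol{0}$, then $\D\x = \boldsymbol{W}_{\Psi,\delta}^{-1}\boldsymbol{0} = \boldsymbol{0}$, giving the inclusion $\ker(\boldsymbol{W}_{\Psi,\delta}\D) \subseteq \ker(\D)$.

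There is no real obstacle here: the only subtlety worth noting explicitly is that the strict positivity of the weights (not just non-negativity) is essential, and this is precisely what Proposition \ref{prop:w_is_a_scale_term} guarantees since $\eta > 0$ keeps the denominator in \eqref{eq:OurWeights} finite and the numerator nonzero. I would also point out that the construction $\boldsymbol{W}_{\Psi,\delta}\D\x = \w(\Psi(\y^\delta)) \odot |\D\x|$ (in the componentwise magnitude sense used in \eqref{eq:TVweighted}) requires the matrix $\boldsymbol{W}_{\Psi,\delta}$ to stack two copies of $\w$ so that horizontal and vertical differences of the same pixel receive the same weight, which is exactly the block structure stated in the lemma.
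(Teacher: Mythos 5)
Your proof is correct and matches the argument the paper implicitly relies on when it declares the result trivial: the weights are strictly positive by Proposition \ref{prop:w_is_a_scale_term}, so $\boldsymbol{W}_{\Psi,\delta}$ is an invertible diagonal matrix and $\boldsymbol{W}_{\Psi,\delta}\D\x = \boldsymbol{0}$ holds if and only if $\D\x = \boldsymbol{0}$. Nothing is missing; your explicit remark that strict positivity (not mere non-negativity) is the essential ingredient is exactly the right point to flag.
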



  This result, together with Assumption \ref{assumption:kernels}, implies that $\ker (\K) \cap \ker(\boldsymbol{W}_{\Psi, \delta} \D) = \{ \boldsymbol{0} \}$. Therefore, we can prove the next Lemma.

    \begin{lemma}\label{lemma:coercivity}
        For any $\delta \geq 0$ and any $\Psi$, the objective function $\mathcal{J}_{\Psi, \delta}$  in \eqref{eq:Psi_Wl1_formulation} is coercive.
    \end{lemma}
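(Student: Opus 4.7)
My plan is to reduce coercivity of $\mathcal{J}_{\Psi,\delta}$ to a statement about the equivalence of norms on the finite-dimensional space $\R^n$. The key observation is that Lemma \ref{lemma:kerWD_equal_kerD} together with Assumption \ref{assumption:kernels} yields $\ker(\K) \cap \ker(\boldsymbol{W}_{\Psi,\delta} \D) = \{\boldsymbol{0}\}$. This means that the map
\begin{equation*}
N(\x) := \|\K \x\|_2 + \|\boldsymbol{W}_{\Psi,\delta} \D \x\|_1
\end{equation*}
is not merely a seminorm but an actual norm on $\R^n$: positive homogeneity and the triangle inequality are inherited from $\|\cdot\|_2$ and $\|\cdot\|_1$, and $N(\x) = 0$ forces $\x \in \ker(\K) \cap \ker(\boldsymbol{W}_{\Psi,\delta}\D) = \{\boldsymbol{0}\}$. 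Since all norms on $\R^n$ are equivalent, there exists $c = c(\Psi, \delta) > 0$ such that $N(\x) \geq c \|\x\|_2$ for every $\x \in \R^n$.

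Next I would decouple the fidelity term from $\y^\delta$ using the elementary inequality $\|\K\x - \y^\delta\|_2^2 \geq \tfrac{1}{2}\|\K\x\|_2^2 - \|\y^\delta\|_2^2$, which follows from expanding the square and applying Young's inequality. This gives
\begin{equation*}
\mathcal{J}_{\Psi,\delta}(\x) \geq \tfrac{1}{2}\|\K\x\|_2^2 + \lambda \|\boldsymbol{W}_{\Psi,\delta} \D \x\|_1 - \|\y^\delta\|_2^2,
\end{equation*}
where the last term is a finite constant independent of $\x$, hence harmless for coercivity.

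To conclude, I would argue by a case split on the two summands of $N(\x)$. Whenever $\|\x\|_2$ is large, the bound $N(\x) \geq c\|\x\|_2$ forces at least one of $\|\K\x\|_2$ or $\|\boldsymbol{W}_{\Psi,\delta}\D\x\|_1$ to be at least $\tfrac{c}{2}\|\x\|_2$. In the first case $\tfrac{1}{2}\|\K\x\|_2^2 \geq \tfrac{c^2}{8}\|\x\|_2^2$, in the second case $\lambda\|\boldsymbol{W}_{\Psi,\delta}\D\x\|_1 \geq \tfrac{\lambda c}{2}\|\x\|_2$, so in either situation the right-hand side of the displayed lower bound tends to $+\infty$ as $\|\x\|_2 \to \infty$. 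This establishes coercivity of $\mathcal{J}_{\Psi,\delta}$.

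The only genuinely delicate point is confirming that $N$ is indeed a norm, but the two preparatory results already do the heavy lifting: Lemma \ref{lemma:kerWD_equal_kerD} ensures that the positivity of $\w(\Psi(\y^\delta))$ (Proposition \ref{prop:w_is_a_scale_term}) does not shrink the kernel of $\D$, and Assumption \ref{assumption:kernels} closes the argument. Once this is in place, norm equivalence on $\R^n$ trivialises the remainder, so I do not anticipate any substantive obstacle.
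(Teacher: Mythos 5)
Your proposal is correct, and it reaches the conclusion by a genuinely more quantitative route than the paper. The paper's own proof is sequence-based: for any $\{\x_k\}$ with $\|\x_k\|_2 \to \infty$ it invokes Assumption \ref{assumption:kernels} (via Lemma \ref{lemma:kerWD_equal_kerD}) to claim that $\x_k$ must leave at least one of $\ker(\K)$, $\ker(\boldsymbol{W}_{\Psi,\delta}\D)$, and then bounds $\mathcal{J}_{\Psi,\delta}(\x_k)$ below by the minimum of two unspecified coercive functions $f_1, f_2$ of $\|\x_k\|_2$. You instead build the auxiliary norm $N(\x) = \|\K\x\|_2 + \|\boldsymbol{W}_{\Psi,\delta}\D\x\|_1$, use trivial kernel intersection to certify definiteness, and use equivalence of norms on $\R^n$ to obtain the explicit bound $N(\x) \geq c\|\x\|_2$, decoupling $\y^\delta$ with $\|\K\x - \y^\delta\|_2^2 \geq \tfrac12\|\K\x\|_2^2 - \|\y^\delta\|_2^2$. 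What your version buys is rigor precisely where the paper is hand-wavy: merely having $\x_k \notin \ker(\K)$ does not by itself give a lower bound on $\|\K\x_k\|_2$ growing with $\|\x_k\|_2$ (the sequence could diverge almost along $\ker(\K)$ with a bounded complementary component), and your norm-equivalence argument is exactly the compactness fact needed to make the dichotomy quantitative; the paper's argument is shorter but leaves $f_1, f_2$ unjustified. One small point of bookkeeping: the regularizer is the isotropic quantity $\mathcal{R}_{\Psi,\delta}(\x) = \|\,|\boldsymbol{W}_{\Psi,\delta}\D\x|\,\|_1$ (Lemma \ref{lemma:kerWD_equal_kerD}), not the plain $\ell_1$ norm of the $2n$-vector $\boldsymbol{W}_{\Psi,\delta}\D\x$; either read your $\|\boldsymbol{W}_{\Psi,\delta}\D\x\|_1$ as the former, or insert the factor $\tfrac{1}{\sqrt{2}}$ from $\sqrt{a^2+b^2} \geq \tfrac{1}{\sqrt{2}}(|a|+|b|)$ — the constant is immaterial to the argument, and both choices vanish exactly on $\ker(\D)$, so the definiteness of $N$ is unaffected.
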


    \begin{proof}
        The proof is in Appendix \ref{app:proofs}
    \end{proof}

    We can now state and prove the theorem on the existence of a minimizer of \eqref{eq:Psi_Wl1_formulation}.
    
\begin{theorem}[Existence] \label{prop:existence}
    For any $\delta \geq 0$ and any $\Psi \in Rec_{m, n}$, $\mathcal{J}_{\Psi, \delta}(\x)$ admits at least a minimizer $\x^*_{\Psi, \delta}$.
\end{theorem}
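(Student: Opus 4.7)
The plan is to apply the direct method of the calculus of variations in finite dimensions. Let $J^* := \inf_{\x \in \X} \mathcal{J}_{\Psi, \delta}(\x)$. Since $\mathcal{J}_{\Psi, \delta}$ is bounded below by $0$ (it is a sum of a squared norm and a non-negative weighted $\ell_1$ term), we have $J^* \geq 0$, and we can select a minimizing sequence $\{ \x_k \}_{k \in \N} \subset \X$ such that $\mathcal{J}_{\Psi, \delta}(\x_k) \to J^*$ as $k \to \infty$.

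The first step is to use Lemma \ref{lemma:coercivity} to guarantee that $\{ \x_k \}$ is bounded in $\R^n$. Indeed, if $\| \x_k \|_2 \to \infty$, coercivity would force $\mathcal{J}_{\Psi, \delta}(\x_k) \to \infty$, contradicting the fact that $\mathcal{J}_{\Psi, \delta}(\x_k)$ converges to the finite value $J^*$. Hence the sequence is contained in a bounded subset of $\R^n$, and by the Bolzano--Weierstrass theorem we can extract a subsequence, still denoted $\{ \x_k \}$, converging to some $\x^*_{\Psi, \delta} \in \R^n$. Because $\X$ is the non-negative orthant, which is closed in $\R^n$, and each $\x_k \in \X$, the limit satisfies $\x^*_{\Psi, \delta} \in \X$.

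The second step is to pass to the limit inside $\mathcal{J}_{\Psi, \delta}$. The fidelity term $\| \K\x - \y^\delta \|_2^2$ is continuous in $\x$, and the weighted regularizer $\mathcal{R}_{\Psi, \delta}(\x) = \| \w(\Psi(\y^\delta)) \odot | \D \x | \> \|_1$ is a finite sum of compositions of continuous functions (linear maps composed with the Euclidean norm on $\R^2$), hence continuous on $\R^n$. Note that the weight vector $\w(\Psi(\y^\delta))$ does not depend on $\x$, so no subtlety arises there. Consequently $\mathcal{J}_{\Psi, \delta}$ is continuous on $\X$, and
\begin{equation*}
\mathcal{J}_{\Psi, \delta}(\x^*_{\Psi, \delta}) = \lim_{k \to \infty} \mathcal{J}_{\Psi, \delta}(\x_k) = J^* = \inf_{\x \in \X} \mathcal{J}_{\Psi, \delta}(\x),
\end{equation*}
so $\x^*_{\Psi, \delta}$ is a minimizer.

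The only nontrivial ingredient is the coercivity of $\mathcal{J}_{\Psi, \delta}$, which is precisely what Lemma \ref{lemma:coercivity} provides via Lemma \ref{lemma:kerWD_equal_kerD} and Assumption \ref{assumption:kernels}; without the trivial-intersection of kernels, the fidelity term could vanish along a direction of $\ker(\K)$ that is simultaneously flat for $\D$, and a minimizing sequence could escape to infinity. Since all the heavy lifting has been done upstream, the existence argument itself reduces to the standard compactness plus continuity routine outlined above, and I do not anticipate any further obstacle.
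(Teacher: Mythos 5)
Your proof is correct and follows essentially the same route as the paper: the only substantive ingredient is the coercivity supplied by Lemma \ref{lemma:coercivity} (via Assumption \ref{assumption:kernels} and Lemma \ref{lemma:kerWD_equal_kerD}), after which existence follows from a standard finite-dimensional argument. The paper invokes convexity plus coercivity as a one-line citation, whereas you unpack the same standard fact via a minimizing sequence, Bolzano--Weierstrass, closedness of the non-negative orthant and continuity of $\mathcal{J}_{\Psi,\delta}$; this is just a more explicit (and equally valid) rendering of the same argument.
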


\begin{proof}
    Note that $\mathcal{J}_{\Psi, \delta}(\x)$ is convex as a function of $\x$, as it is the sum of two convex functions $|| \K \x - \y^\delta||_2^2$ and $\mathcal{R}_{\Psi, \delta}(\x)$. Moreover, by Lemma \ref{lemma:coercivity}, $\mathcal{J}_{\Psi, \delta}(\x)$ is coercive. Therefore, it has at least a minimizer $\x^*_{\Psi, \delta}$.
\end{proof}

We define the following set:
\begin{align}
    \mathcal{M} := \{ \x^*_{\Psi, \delta} \in \X \> | \> \x^*_{\Psi, \delta} \mbox{ is a minimizer of } \mathcal{J}_{\Psi, \delta}(\x) \}
\end{align}
and, for any $\x \in \mathcal{M}$, we define  $\bar{I}(\x) = \{ i \in \{1, \dots, n\} \ | \ \left(| \D \x |\right)_i = 0 \}$  as the set of indices where $| \D \x |$ is zero, and   $S(\x):= \{ \boldsymbol{v} \in \R^n \: | \:  \ \left(| \D \boldsymbol{v}|\right)_i = 0\ \forall\ i\in\bar{I}(\x) \}$ as the set of vectors whose gradient is zero wherever the gradient of $\x$ is zero. 

To prove the uniqueness, we 
 need to derive the optimality conditions for $\mathcal{J}_{\Psi, \delta}(\x)$. Let $\partial \mathcal{J}_{\Psi, \delta}(\x)$ be the subdifferential of $\mathcal{J}_{\Psi, \delta}(\x)$, computed on $\x$. By the Fermat Theorem, if $\x^*_{\Psi, \delta}$ is a minimizer of $\mathcal{J}_{\Psi, \delta}(\x)$, then necessarily $\boldsymbol{0} \in \partial \mathcal{J}_{\Psi, \delta}(\x^*_{\Psi, \delta})$. By the linearity of the subgradient, it is not hard to show that:

\begin{align}
\begin{split}
    \partial \mathcal{J}_{\Psi, \delta}(\x) &= \left\{
        2 \K^T(\K \x - \y^\delta) + \lambda \boldsymbol{v}\> | \>\boldsymbol{v}\in\partial \mathcal{R}_{\Psi, \delta}(\x)
    \right\}.
\end{split}
\end{align}

To compute  $\partial \mathcal{R}_{\Psi, \delta}(\x)$ we first prove the following lemma:

\begin{lemma}\label{lemma:TV_gradient}
        Let $TV(\x) = \sum_{i=1}^n \sqrt{\left( \D_h \x \right)_i^2 + \left( \D_v \x \right)_i^2}$, where $\D_h, \D_v$ are defined as in \eqref{eq:TV_definition}.
    Moreover, define the vector $\boldsymbol{g} \in \R^{2n}$ such that:
    \begin{align}\label{eq:g_definition}
        \boldsymbol{g}_i = \begin{cases}
            \frac{(\D_h \x)_i}{(| \D \x |_i)} & \mbox{if } i \in \{1, \dots, n\} \cap \left( \bar{I}(\x) \right)^c, \\
            \frac{(\D_v \x)_i}{(| \D \x |_i)} & \mbox{if } i \in \{n+1, \dots, 2n\} \cap \left( \bar{I}(\x) \right)^c, \\
            c & \mbox{if } i \in \bar{I}(\x). \\
        \end{cases}
    \end{align}
    Then:
    \begin{align}
        \partial TV(\x) = \{ \D^T \boldsymbol{g} \text{ with } \boldsymbol{g}\in\R^{2n} \mbox{ defined as in  \eqref{eq:g_definition} with } c \in [-1, 1]\}. 
    \end{align}
\end{lemma}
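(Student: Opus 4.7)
The plan is to interpret the isotropic TV as the composition of a separable sum of Euclidean norms on $\R^2$ with the linear operator $\D$, and then apply the standard chain rule for the subdifferential of a convex function composed with a linear map. Concretely, I would define $\phi:\R^{2n}\to\R$ by
\begin{equation*}
\phi(\y) = \sum_{i=1}^n \sqrt{y_i^2 + y_{n+i}^2},
\end{equation*}
so that $TV(\x) = \phi(\D\x)$. Since $\phi$ is convex and finite-valued (hence continuous on all of $\R^{2n}$), the chain rule for subdifferentials yields $\partial TV(\x) = \D^T \partial \phi(\D\x)$. This immediately gives the outer form $\D^T \boldsymbol{g}$ that appears in the statement; the remaining task is to characterize $\partial \phi(\D \x)$.

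Next I would exploit the separable structure of $\phi$. Writing $\phi(\y) = \sum_{i=1}^n \psi(y_i, y_{n+i})$ with $\psi:\R^2\to\R$, $\psi(u,v) = \sqrt{u^2+v^2}$, the subdifferential decomposes into the corresponding pairs: a vector $\boldsymbol{g}\in\R^{2n}$ belongs to $\partial \phi(\y)$ if and only if $(g_i, g_{n+i}) \in \partial \psi(y_i, y_{n+i})$ for every $i=1,\ldots,n$. It then remains to recall the well-known subdifferential of the Euclidean norm on $\R^2$:
\begin{equation*}
\partial \psi(u,v) =
\begin{cases}
\left\{ \tfrac{(u,v)}{\sqrt{u^2+v^2}} \right\} & \text{if } (u,v)\neq(0,0),\\
\{(a,b)\in\R^2 : a^2+b^2 \leq 1\} & \text{if } (u,v)=(0,0).
\end{cases}
\end{equation*}
Applying this pointwise with $(y_i,y_{n+i}) = ((\D_h\x)_i,(\D_v\x)_i)$ produces precisely the formulas in \eqref{eq:g_definition}: on indices $i \notin \bar{I}(\x)$ the components are forced to the normalized gradient, while on indices $i \in \bar{I}(\x)$ the pair $(g_i,g_{n+i})$ is any element of the closed unit ball of $\R^2$, which is encoded in the statement by the free parameter $c$.

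The main (essentially only) obstacle is a clean application of the composition rule $\partial (\phi\circ \D)(\x) = \D^T \partial \phi(\D\x)$; I would justify it by invoking Theorem~23.9 of Rockafellar (or the analogous result in \cite{bauschke2017correction}), whose qualification condition $\operatorname{ri}(\operatorname{dom}\phi) \cap \operatorname{range}(\D) \neq \emptyset$ is trivially satisfied here because $\operatorname{dom}\phi = \R^{2n}$. Everything else is bookkeeping: the separable decomposition of $\partial \phi$ and the two-case formula for $\partial \psi$ at zero and nonzero arguments are standard, and together they reproduce the stated characterization of $\partial TV(\x)$.
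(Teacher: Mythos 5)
Your argument is correct and follows essentially the same route as the paper's proof: both rewrite the isotropic TV as a sum of Euclidean norms of the pairs $((\D_h\x)_i,(\D_v\x)_i)$ and combine the chain rule for a convex function composed with a linear map with the known subdifferential of $\|\cdot\|_2$ (the paper does this term by term via selection operators $\boldsymbol{U}_i$ with $\boldsymbol{U}_i\D\x=((\D_h\x)_i,(\D_v\x)_i)$, you do it once through $\D$ and then use separability of $\phi$). The one caveat concerns the zero-gradient indices: your unit-ball description $\{(a,b)\in\R^2: a^2+b^2\le 1\}$ is the correct characterization there, and it does not coincide with the parametrization by a single scalar $c$ in \eqref{eq:g_definition} (the paper's own proof takes pairs $(c,c)$ with $c\in[-1,1]$, which neither exhaust the unit ball nor remain inside it when $|c|>1/\sqrt{2}$), so your closing claim that the ball ``is encoded in the statement by the free parameter $c$'' glosses over an imprecision that actually resides in the lemma's formulation rather than in your argument.
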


\begin{proof}
    The proof is in  Appendix \ref{app:TV_gradient}.
\end{proof}

Exploiting the characterization of $\partial TV(\x)$ reported in Lemma \ref{lemma:TV_gradient} and recalling the definition of our weighted regularization term:
\begin{equation*}
     \mathcal{R}_{\Psi, \delta}(\x) = || \w(\Psi(\y^\delta)) \odot | \D \x | \> ||_1 = \sum_{i=1}^{n}(\w(\Psi(\y^\delta)))_{i} \sqrt{(\D_h \x)^2_{i} + (\D_v \x)^2_{i}}, 
\end{equation*}
we can easily prove that $\partial\mathcal{R}_{\Psi, \delta} (\x) = \left\{\w(\Psi(\y^\delta)) \odot \boldsymbol{v}\> | \>\boldsymbol{v}\in \partial TV(\x)\right\}$ or equivalently
\begin{equation}
    \partial\mathcal{R}_{\Psi, \delta}(\x) = \{ \w(\Psi(\y^\delta)) \odot \D^T \boldsymbol{g}\> |\> \boldsymbol{g} \mbox{ defined as in  \eqref{eq:g_definition}}\}.
\end{equation}

Therefore, we can derive the optimality conditions for the variational problem \eqref{eq:Psi_Wl1_formulation}.

\begin{lemma}\label{lemma:optimality_condition}
    If $\x^*_{\Psi, \delta}$ is a minimizer of $\mathcal{J}_{\Psi, \delta}$, then there exists $\bar{\boldsymbol{g}}\in\R^{2n}$ such that $\D^T \bar{\boldsymbol{g}} \in \partial TV(\x^*_{\Psi, \delta})$ and satisfies the following condition:
    \begin{align}
        \w(\Psi(\y^\delta)) \odot \D^T\bar{\boldsymbol{g}} = - \frac{2}{\lambda} \K^T (\K \x^*_{\Psi, \delta} - \y^\delta).
    \end{align}
\end{lemma}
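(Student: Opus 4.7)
The plan is to combine Fermat's stationarity condition with the subdifferential calculus that has already been assembled immediately before the lemma, so the proof is essentially bookkeeping. Since $\mathcal{J}_{\Psi, \delta}$ is convex (as noted in the proof of Theorem \ref{prop:existence}), any minimizer $\x^*_{\Psi, \delta}$ satisfies $\boldsymbol{0} \in \partial \mathcal{J}_{\Psi, \delta}(\x^*_{\Psi, \delta})$.

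Using the decomposition $\partial \mathcal{J}_{\Psi, \delta}(\x) = \{ 2\K^T(\K\x - \y^\delta) + \lambda \boldsymbol{v}\,|\, \boldsymbol{v} \in \partial \mathcal{R}_{\Psi, \delta}(\x)\}$ already displayed in the text, Fermat's rule yields the existence of some $\boldsymbol{v} \in \partial \mathcal{R}_{\Psi, \delta}(\x^*_{\Psi, \delta})$ with $2 \K^T(\K \x^*_{\Psi, \delta} - \y^\delta) + \lambda \boldsymbol{v} = \boldsymbol{0}$. Note that the splitting is unconditional because the data-fidelity term is smooth with gradient $2\K^T(\K\x - \y^\delta)$, so no subdifferential qualification condition is required.

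Next I would invoke the characterization $\partial \mathcal{R}_{\Psi, \delta}(\x) = \{\w(\Psi(\y^\delta)) \odot \D^T \boldsymbol{g}\,|\, \boldsymbol{g} \text{ as in \eqref{eq:g_definition}}\}$ derived just above the lemma, together with Lemma \ref{lemma:TV_gradient}, which guarantees that every such $\D^T \boldsymbol{g}$ lies in $\partial TV(\x^*_{\Psi, \delta})$. Taking $\bar{\boldsymbol{g}}$ to be the specific $\boldsymbol{g}$ whose image under $\w(\Psi(\y^\delta)) \odot \D^T(\cdot)$ equals the $\boldsymbol{v}$ selected above, and dividing the stationarity identity by $-\lambda$, gives exactly $\w(\Psi(\y^\delta)) \odot \D^T \bar{\boldsymbol{g}} = -\frac{2}{\lambda}\K^T(\K\x^*_{\Psi, \delta} - \y^\delta)$, as required.

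The argument is essentially immediate given what has been assembled, so I do not expect any real obstacle. The only subtle point I would flag is the presence of the constraint $\x \in \X$ in the formulation \eqref{eq:Psi_Wl1_formulation}: strictly speaking, Fermat's condition on a constrained convex program should add a normal cone term $N_{\X}(\x^*_{\Psi, \delta})$, which the statement suppresses. The lemma is therefore to be read either as the unconstrained optimality condition or under the tacit assumption that $\x^*_{\Psi, \delta}$ lies in the interior of the non-negative orthant so that $N_{\X}(\x^*_{\Psi, \delta}) = \{\boldsymbol{0}\}$; with this caveat understood, the proof proceeds as above.
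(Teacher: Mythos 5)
Your proof is correct and takes essentially the same route as the paper, whose own argument is just the one-line observation that the claim follows from $\boldsymbol{0}\in\partial \mathcal{J}_{\Psi, \delta}(\x^*_{\Psi, \delta})$ together with the subdifferential decomposition of $\mathcal{J}_{\Psi, \delta}$ and the characterization of $\partial \mathcal{R}_{\Psi, \delta}$ displayed immediately before the lemma. Your caveat about the missing normal cone term $N_{\X}(\x^*_{\Psi, \delta})$ is a fair observation; the paper silently suppresses it as well, so reading the lemma as the unconstrained stationarity condition is consistent with the paper's intent.
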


\begin{proof}
    The proof trivially follows observing that $\boldsymbol{0}\in\partial \mathcal{J}_{\Psi, \delta} (\x^*_{\Psi, \delta})$.
\end{proof}

The optimality conditions represent one of the two ingredients necessary to prove the uniqueness of $\x^*_{\Psi, \delta}$. The next step is to derive some properties of the objective function $\mathcal{J}_{\Psi, \delta}(\x)$ on elements of $\mathcal{M}$. Their proof is again obtained through a sequence of lemmas, some of which are similar to results presented e.g. in \cite{jorgensen2015testable}.

\begin{lemma}\label{lemma:J_properties_on_M}
    For any $\x_1, \x_2 \in \mathcal{M}$, the following holds:
    \begin{enumerate}
        \item $\x_1-\x_2 \in \ker(\K)$,
        \item $\mathcal{R}(\x_1) =\mathcal{R}(\x_2)$.
    \end{enumerate}
\end{lemma}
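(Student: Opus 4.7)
The plan is to exploit convexity of $\mathcal{M}$ together with the strict convexity of the squared-norm fidelity term to force equality in Jensen's inequality, which will directly give both conclusions.

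First, I would note that $\mathcal{J}_{\Psi, \delta}(\x) = \|\K\x - \y^\delta\|_2^2 + \lambda \mathcal{R}_{\Psi, \delta}(\x)$ is convex on $\X$, so the minimizing set $\mathcal{M}$ is itself convex. In particular, given any $\x_1, \x_2 \in \mathcal{M}$, their midpoint $\x_{1/2} := \tfrac{1}{2}(\x_1 + \x_2)$ lies in $\mathcal{M}$, so $\mathcal{J}_{\Psi, \delta}(\x_{1/2}) = \mathcal{J}_{\Psi, \delta}(\x_1) = \mathcal{J}_{\Psi, \delta}(\x_2)$. Applying convexity termwise,
\begin{align*}
\|\K\x_{1/2} - \y^\delta\|_2^2 &\leq \tfrac{1}{2}\|\K\x_1 - \y^\delta\|_2^2 + \tfrac{1}{2}\|\K\x_2 - \y^\delta\|_2^2,\\
\mathcal{R}_{\Psi, \delta}(\x_{1/2}) &\leq \tfrac{1}{2}\mathcal{R}_{\Psi, \delta}(\x_1) + \tfrac{1}{2}\mathcal{R}_{\Psi, \delta}(\x_2),
\end{align*}
and summing gives $\mathcal{J}_{\Psi, \delta}(\x_{1/2}) \leq \tfrac{1}{2}\mathcal{J}_{\Psi, \delta}(\x_1) + \tfrac{1}{2}\mathcal{J}_{\Psi, \delta}(\x_2)$. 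Because all three objective values coincide, equality must hold in both of the above inequalities simultaneously.

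Next I would use the strict convexity of the map $\z \mapsto \|\z - \y^\delta\|_2^2$ on $\R^m$: equality in the first inequality forces $\K\x_1 = \K\x_2$, which establishes $\x_1 - \x_2 \in \ker(\K)$ and proves claim (1). Then, from $\K\x_1 = \K\x_2$ the two fidelity values $\|\K\x_1 - \y^\delta\|_2^2$ and $\|\K\x_2 - \y^\delta\|_2^2$ are equal, and combined with $\mathcal{J}_{\Psi, \delta}(\x_1) = \mathcal{J}_{\Psi, \delta}(\x_2)$ this yields $\lambda \mathcal{R}_{\Psi, \delta}(\x_1) = \lambda \mathcal{R}_{\Psi, \delta}(\x_2)$, i.e., claim (2).

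I do not anticipate a real obstacle here: the argument is the standard midpoint-convexity trick, and the only subtlety is that the fidelity term is strictly convex in $\K\x$ rather than in $\x$, which is exactly why we obtain $\x_1 - \x_2 \in \ker(\K)$ instead of the stronger $\x_1 = \x_2$. Uniqueness itself will require the additional optimality information from Lemma \ref{lemma:optimality_condition}, but that is outside the scope of the present lemma.
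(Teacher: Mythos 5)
Your proof is correct and takes essentially the same route as the paper: both arguments pass to the midpoint of two minimizers, use convexity of $\mathcal{J}_{\Psi,\delta}$ and of $\X$ to force equal objective values there, and then exploit the strict convexity of the quadratic fidelity in $\K\x$ to conclude $\K\x_1=\K\x_2$, after which the equality of the regularization values is immediate. The only cosmetic difference is that the paper makes the strictness quantitative through the identity $\mathcal{J}_{\Psi,\delta}\bigl(\tfrac{\x_1+\x_2}{2}\bigr)\le\tfrac12\bigl(\mathcal{J}_{\Psi,\delta}(\x_1)+\mathcal{J}_{\Psi,\delta}(\x_2)\bigr)-\tfrac18\|\K\x_1-\K\x_2\|_2^2$, whereas you invoke the equality case in Jensen's inequality term by term.
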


\begin{proof}
    The proof is in Appendix \ref{app:J_properties_on_M}.
\end{proof}

We finally need this property of the set $S(\x)$ to prove the theorem.

\begin{lemma}\label{lemma:S1_vectorial_space}
    For any $\x \in \X$, the set $S(\x)$ is a vectorial space and $\x \in S(\x)$.
\end{lemma}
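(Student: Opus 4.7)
The claim has two parts: (i) $S(\x)$ is a vector subspace of $\R^n$, and (ii) $\x$ itself lies in $S(\x)$. Both follow directly from the definitions, and the key ingredient is the linearity of the discrete gradient operator $\D$ together with the pointwise characterization of $|\D \cdot|$.

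For part (ii), I would simply unwind the definition: the set $\bar{I}(\x)$ is precisely the set of indices where $(|\D\x|)_i = 0$, so the defining condition of $S(\x)$ is satisfied by $\x$ by construction.

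For part (i), the plan is to check the three vector-subspace axioms. First, $\boldsymbol{0} \in S(\x)$ because $(|\D \boldsymbol{0}|)_i = 0$ for every $i$, in particular for every $i \in \bar{I}(\x)$. Next, take $\boldsymbol{v}_1, \boldsymbol{v}_2 \in S(\x)$ and $\alpha, \beta \in \R$; for any $i \in \bar{I}(\x)$ the condition $(|\D \boldsymbol{v}_j|)_i = 0$ is equivalent to the pair of scalar equalities $(\D_h \boldsymbol{v}_j)_i = 0$ and $(\D_v \boldsymbol{v}_j)_i = 0$ (each square under the square root is non-negative). By linearity of $\D_h$ and $\D_v$,
\begin{equation*}
    (\D_h (\alpha \boldsymbol{v}_1 + \beta \boldsymbol{v}_2))_i = \alpha (\D_h \boldsymbol{v}_1)_i + \beta (\D_h \boldsymbol{v}_2)_i = 0,
\end{equation*}
and analogously for $\D_v$, which gives $(|\D(\alpha \boldsymbol{v}_1 + \beta \boldsymbol{v}_2)|)_i = 0$ for every $i \in \bar{I}(\x)$. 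Hence $\alpha \boldsymbol{v}_1 + \beta \boldsymbol{v}_2 \in S(\x)$, establishing closure under linear combinations.

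There is no real obstacle here: the statement is essentially bookkeeping, since $S(\x)$ is by construction the kernel of the linear map obtained by restricting $\D$ to the index set $\bar{I}(\x)$, and kernels of linear maps are always vector spaces. The only mild care needed is to observe that the nonlinear quantity $|\D \boldsymbol{v}|_i = \sqrt{(\D_h \boldsymbol{v})_i^2 + (\D_v \boldsymbol{v})_i^2}$ vanishes if and only if both components $(\D_h \boldsymbol{v})_i$ and $(\D_v \boldsymbol{v})_i$ vanish, so that the defining constraint of $S(\x)$ is equivalent to a linear one and the subspace property follows.
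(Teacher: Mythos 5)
Your proof is correct and follows exactly the intended argument: the paper dismisses this as trivial, and your write-up simply fills in the bookkeeping, observing that $(|\D \boldsymbol{v}|)_i = 0$ is equivalent to the two linear constraints $(\D_h \boldsymbol{v})_i = 0$ and $(\D_v \boldsymbol{v})_i = 0$, so $S(\x)$ is an intersection of kernels of linear maps, while $\x \in S(\x)$ holds by the very definition of $\bar{I}(\x)$. Nothing further is needed.
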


\begin{proof}
The proof is trivial.
\end{proof}

The following theorem states the condition to have a unique solution of the considered minimization problem.

\begin{theorem}[Uniqueness] \label{prop:unicity}
    For any $\delta \geq 0$ and any $\Psi \in Rec_{m, n}$, let $\x^*_{\Psi, \delta} \in \mathcal{M}$ be a minimizer of $\mathcal{J}_{\Psi, \delta}(\x)$. If:
    \begin{enumerate}
        \item $\ker(\K) \cap S(\x^*_{\Psi, \delta}) = \{ \boldsymbol{0} \}$,
        \item $\bar{\boldsymbol{g}}$, defined as in Lemma \ref{lemma:optimality_condition}, has $c \in (-1, 1)$,
    \end{enumerate} 
    then $\mathcal{M} = \{ \x^*_{\Psi, \delta} \}$, i.e. $\x^*_{\Psi, \delta}$ is the only minimizer of $\mathcal{J}_{\Psi, \delta}$.
\end{theorem}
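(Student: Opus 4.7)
The plan is to fix an arbitrary $\x_2 \in \mathcal{M}$ and show $\x_2 = \x^*_{\Psi, \delta}$ by combining the common-subgradient furnished by Lemma \ref{lemma:optimality_condition} with the structural consequences of Lemma \ref{lemma:J_properties_on_M}. Setting $\boldsymbol{h} := \x_2 - \x^*_{\Psi, \delta}$, Lemma \ref{lemma:J_properties_on_M} immediately yields $\boldsymbol{h}\in\ker(\K)$ together with $\mathcal{R}_{\Psi, \delta}(\x_2) = \mathcal{R}_{\Psi, \delta}(\x^*_{\Psi, \delta})$. Since hypothesis (1) enforces $\ker(\K) \cap S(\x^*_{\Psi, \delta}) = \{\boldsymbol{0}\}$, the entire argument reduces to proving $\boldsymbol{h} \in S(\x^*_{\Psi, \delta})$, i.e.\ that $(|\D \x_2|)_i = 0$ at every index $i \in \bar{I}(\x^*_{\Psi, \delta})$, since $(|\D\x^*_{\Psi, \delta}|)_i = 0$ there.

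Next I would upgrade the subgradient provided by Lemma \ref{lemma:optimality_condition} into one that is simultaneously valid at $\x^*_{\Psi, \delta}$ and $\x_2$. Let $\boldsymbol{v} := \w(\Psi(\y^\delta)) \odot \D^T\bar{\boldsymbol{g}} \in \partial \mathcal{R}_{\Psi, \delta}(\x^*_{\Psi, \delta})$, so that $\boldsymbol{v} = -\tfrac{2}{\lambda}\K^T(\K\x^*_{\Psi, \delta} - \y^\delta)$. The identity $\K \boldsymbol{h} = \boldsymbol{0}$ implies $\boldsymbol{v} = -\tfrac{2}{\lambda}\K^T(\K\x_2 - \y^\delta)$, and Fermat at the minimizer $\x_2$ gives $\boldsymbol{v} \in \partial \mathcal{R}_{\Psi, \delta}(\x_2)$ as well. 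Writing the two subgradient inequalities around $\x^*_{\Psi, \delta}$ and $\x_2$ against this common $\boldsymbol{v}$ and using $\mathcal{R}_{\Psi, \delta}(\x^*_{\Psi, \delta}) = \mathcal{R}_{\Psi, \delta}(\x_2)$ yields the sharp equality
\begin{equation*}
    \mathcal{R}_{\Psi, \delta}(\x_2) = \mathcal{R}_{\Psi, \delta}(\x^*_{\Psi, \delta}) + \langle \boldsymbol{v}, \boldsymbol{h}\rangle .
\end{equation*}

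Since $\mathcal{R}_{\Psi, \delta}$ decomposes as a sum of pixelwise 2-norm contributions, this aggregate equality forces termwise equality in the local subgradient inequality at every index. For $i \in \bar{I}(\x^*_{\Psi, \delta})$ it reads
\begin{equation*}
    \sqrt{(\D_h \x_2)_i^2 + (\D_v \x_2)_i^2} = \bigl\langle (\bar{\boldsymbol{g}}_i, \bar{\boldsymbol{g}}_{n+i}),\, ((\D_h \x_2)_i, (\D_v \x_2)_i)\bigr\rangle,
\end{equation*}
and hypothesis (2) --- interpreted as placing the pair $(\bar{\boldsymbol{g}}_i, \bar{\boldsymbol{g}}_{n+i})$ strictly inside the Euclidean unit ball on $\bar{I}(\x^*_{\Psi, \delta})$ --- makes Cauchy--Schwarz strict unless $(\D_h\x_2)_i = (\D_v\x_2)_i = 0$. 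Hence $(|\D\x_2|)_i = 0$ on all of $\bar{I}(\x^*_{\Psi, \delta})$, so $\boldsymbol{h} \in S(\x^*_{\Psi, \delta})$; combined with $\boldsymbol{h} \in \ker(\K)$ and hypothesis (1), this gives $\boldsymbol{h} = \boldsymbol{0}$ and therefore $\mathcal{M} = \{\x^*_{\Psi, \delta}\}$.

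The main obstacle will be the descent from the single aggregate equality to a strict pointwise Cauchy--Schwarz bound on $\bar{I}(\x^*_{\Psi, \delta})$: it requires both the separability of $\mathcal{R}_{\Psi, \delta}$ (so that a sum of nonnegative slack terms collapses only when each local slack does) and a careful reading of the free slot ``$c$'' in Lemma \ref{lemma:TV_gradient} as a \emph{strict} norm constraint on the two-dimensional pair indexed by $i$ and $n+i$, which is exactly what prevents the local inequality from being saturated by any nonzero direction.
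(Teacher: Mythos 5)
Your proposal is correct and follows essentially the same route as the paper's proof: both rest on Lemma \ref{lemma:J_properties_on_M} (common $\K$-image and equal regularizer values), the optimality condition of Lemma \ref{lemma:optimality_condition} giving a shared subgradient, equality in the subgradient inequality (the paper phrases this as a vanishing Bregman distance), and a pointwise Cauchy--Schwarz argument made strict on $\bar{I}(\x^*_{\Psi,\delta})$ by hypothesis (2), before invoking hypothesis (1) with $\ker(\K)$. The only difference is presentational: you localize the equality termwise and conclude $|\D\x_2|$ vanishes on $\bar{I}(\x^*_{\Psi,\delta})$ directly, whereas the paper runs a contradiction via a single index $\hat{i}$; your strict-unit-ball reading of the constant $c$ is also the cleaner way to make that step rigorous.
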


\begin{proof}
    By contradiction, let $\x^*_{\Psi, \delta}, \x' \in \mathcal{M}$ such that $\x^*_{\Psi, \delta} \neq \x'$, and consider the set $S(\x^*_{\Psi, \delta})$ defined above.
    
    If $\x' \in S(\x^*_{\Psi, \delta})$, then $\x^*_{\Psi, \delta} - \x' \in S(\x^*_{\Psi, \delta})$ as well, since $S(\x^*_{\Psi, \delta})$ is a vectorial space as shown in Lemma \ref{lemma:S1_vectorial_space}. Moreover, by Lemma \ref{lemma:J_properties_on_M}, $\x^*_{\Psi, \delta} - \x' \in \ker(\K)$. Consequently, since $\ker(\K) \cap S(\x^*_{\Psi, \delta}) = \{ \boldsymbol{0} \}$ by hypothesis, then $\x^*_{\Psi, \delta} = \x'$, contraddicting the assumption.

    If instead $\x' \notin S(\x^*_{\Psi, \delta})$, consider the vector $\bar{\boldsymbol{g}}\in \R^{2n}$, defined as in Lemma \ref{lemma:optimality_condition}. Note that, by hypothesis, it has $c \in [-1, 1]$, and $\D^T \bar{\boldsymbol{g}} \in \partial TV(\x^*_{\Psi, \delta})$ as shown in Lemma \ref{lemma:TV_gradient}. Defined $\boldsymbol{W}_{\Psi, \delta}$ as the $2n \times 2n$ diagonal matrix with two copies of $\w(\Psi(\y^\delta))$ as diagonal as in Lemma \ref{lemma:kerWD_equal_kerD}, it is not hard to show that $\D^T \boldsymbol{W}_{\Psi, \delta}^T \bar{\boldsymbol{g}} = \w(\Psi(\y^\delta)) \odot \D^T \bar{\boldsymbol{g}} \in \partial \mathcal{R}_{\Psi, \delta}(\x^*_{\Psi, \delta})$, the subdifferential of $\mathcal{R}_{\Psi, \delta}(\x)$ in $\x^*_{\Psi, \delta}$. Therefore, let $\mathcal{B}_{\mathcal{R}_{\Psi, \delta}}(\x', \x^*_{\Psi, \delta})$ be the Bregman distance between the two vectors $\x^*_{\Psi, \delta}, \x'$ associated with $\mathcal{R}_{\Psi, \delta}(\x)$ at $\D^T \boldsymbol{W}_{\Psi, \delta}^T \bar{\boldsymbol{g}}$. By Lemma \ref{lemma:J_properties_on_M},
    \begin{align}
    \begin{split}
        \mathcal{B}_{\mathcal{R}_{\Psi, \delta}}(\x', \x^*_{\Psi, \delta}) &= \underbrace{\mathcal{R}_{\Psi, \delta}(\x') - \mathcal{R}_{\Psi, \delta}(\x^*_{\Psi, \delta})}_{= 0} - \langle \D^T \boldsymbol{W}_{\Psi, \delta}^T \bar{\boldsymbol{g}}, \x' - \x^*_{\Psi, \delta} \rangle \\ &=  \langle \D^T \boldsymbol{W}_{\Psi, \delta}^T \bar{\boldsymbol{g}}, \x^*_{\Psi, \delta} - \x' \rangle.
    \end{split}
    \end{align} 
    
    By Lemma \ref{lemma:optimality_condition}, it holds:

    \begin{align}
        \D^T \boldsymbol{W}_{\Psi, \delta}^T\bar{\boldsymbol{g}} = - \frac{2}{\lambda}\K^T (\K\x^*_{\Psi, \delta} - \y^\delta),
    \end{align}

    therefore:

    \begin{align}
    \begin{split}
        \mathcal{B}_{\mathcal{R}_{\Psi, \delta}}(\x', \x^*_{\Psi, \delta}) &=  \frac{2}{\lambda} \langle \K^T (\K\x^*_{\Psi, \delta} - \y^\delta), \x^*_{\Psi, \delta} - \x' \rangle \\&= \frac{2}{\lambda} \langle \K\x^*_{\Psi, \delta} - \y^\delta, \K (\x^*_{\Psi, \delta} - \x') \rangle = 0,
    \end{split}
    \end{align}

    where the last equality follows since $\x^*_{\Psi, \delta} - \x' \in \ker(\K)$ by Lemma \ref{lemma:J_properties_on_M}. Consequently,

    \begin{align}
    \begin{split}
        &0 = \langle \D^T \boldsymbol{W}_{\Psi, \delta}^T \bar{\boldsymbol{g}}, \x^*_{\Psi, \delta} - \x' \rangle = \langle \bar{\boldsymbol{g}}, \boldsymbol{W}_{\Psi, \delta} \D \x^*_{\Psi, \delta}  \rangle - \langle \bar{\boldsymbol{g}}, \boldsymbol{W}_{\Psi, \delta} \D \x' \rangle \\&\iff \mathcal{R}_{\Psi, \delta}(\x') = \langle \bar{\boldsymbol{g}}, \boldsymbol{W}_{\Psi, \delta} \D \x' \rangle,
    \end{split}
    \end{align}

    where we used that:

    \begin{align}
    \begin{split}
        \langle \bar{\boldsymbol{g}}, \boldsymbol{W}_{\Psi, \delta} \D \x^*_{\Psi, \delta} \rangle &= \sum_{i=1}^{2n} (\w(\Psi(\y^\delta)))_i (\bar{\boldsymbol{g}})_i (\D \x^*_{\Psi, \delta})_i \\ 
        &=  \sum_{i \in \{1, \dots, 2n\} \cap \left( \bar{I}(\x^*_{\Psi, \delta}) \right)^c} (\w(\Psi(\y^\delta)))_i \bar{\boldsymbol{g}})_i (\D \x^*_{\Psi, \delta})_i 
        + \sum_{i \in \bar{I}(\x^*_{\Psi, \delta})} (\w(\Psi(\y^\delta)))_i (\bar{\boldsymbol{g}})_i (\D \x^*_{\Psi, \delta})_i \\ 
        &= \sum_{i \in \{1, \dots, n\} \cap \left( \bar{I}(\x^*_{\Psi, \delta}) \right)^c} 
        (\w(\Psi(\y^\delta)))_i \frac{(\D_h \x^*_{\Psi, \delta})_i^2}{(|\D \x^*_{\Psi, \delta}|)_i} + \sum_{i \in \{n+1, \dots, 2n\} \cap \left( \bar{I}(\x^*_{\Psi, \delta}) \right)^c} 
        (\w(\Psi(\y^\delta)))_i \frac{(\D_v \x^*_{\Psi, \delta})_i^2}{(|\D \x^*_{\Psi, \delta}|)_i} \\
        &= \sum_{i \in \{1, \dots, n\}} (\w(\Psi(\y^\delta)))_i 
        \frac{(\D_h \x^*_{\Psi, \delta})_i^2 + (\D_v \x^*_{\Psi, \delta})_i^2}{(|\D \x^*_{\Psi, \delta}|)_i} \\ 
        &= \sum_{i \in \{1, \dots, n\}} (\w(\Psi(\y^\delta)))_i(|\D \x^*_{\Psi, \delta}|)_i \\
        &= \mathcal{R}_{\Psi, \delta}(\x^*_{\Psi, \delta}) = \mathcal{R}_{\Psi, \delta}(\x'),
    \end{split}
    \end{align}

    where the last equality follows by Lemma \ref{lemma:J_properties_on_M}. 

    Since by hypothesis $\x' \notin S(\x^*_{\Psi, \delta})$, there exists at least an index $\hat{i}$ such that $(| \D \x^*_{\Psi, \delta} |)_{\hat{i}} = 0$ but $(| \D \x' |)_{\hat{i}} \neq 0$. Therefore, defining $\bar{\boldsymbol{g}}_h = (\bar{\boldsymbol{g}}_1, \bar{\boldsymbol{g}}_2, \dots, \bar{\boldsymbol{g}}_n) \in \R^n$, $\bar{\boldsymbol{g}}_v = (\bar{\boldsymbol{g}}_{n+1}, \bar{\boldsymbol{g}}_{n+2}, \dots, \bar{\boldsymbol{g}}_{2n}) \in \R^n$, and $| \bar{\boldsymbol{g}} | \in \R^n$ as $(| \bar{\boldsymbol{g}} |)_i = \sqrt{(\bar{\boldsymbol{g}}_h)_i^2 + (\bar{\boldsymbol{g}}_v)_i^2}$, it holds:
    \begin{align}
    \begin{split}
        \mathcal{R}_{\Psi, \delta}(\x') &= \langle \bar{\boldsymbol{g}}, \boldsymbol{W}_{\Psi, \delta} \D \x' \rangle = \sum_{i=1}^{2n} (\w(\Psi(\y^\delta)))_i (\bar{\boldsymbol{g}})_i (\D \x')_i = \langle \begin{bmatrix}\bar{\boldsymbol{g}}_h \\ \bar{\boldsymbol{g}}_v\end{bmatrix}, \boldsymbol{W}_{\Psi, \delta} \begin{bmatrix}\D_h \x' \\ \D_v \x'\end{bmatrix}\rangle \\& \leq \langle | \bar{\boldsymbol{g}} |, | \boldsymbol{W}_{\Psi, \delta} \D \x' | \rangle = \sum_{i=1}^{n} (\w(\Psi(\y^\delta)))_i (| \bar{\boldsymbol{g}} |)_i (| \D \x' |)_i \\ &= \sum_{\substack{i = 1 \\ i \neq \hat{i}}}^{n} (\w(\Psi(\y^\delta)))_i (| \bar{\boldsymbol{g}} |)_i (| \D \x' |)_i + (\w(\Psi(\y^\delta)))_{\hat{i}}|c| (| \D \x' |)_{\hat{i}} \\ & < \sum_{\substack{i = 1 \\ i \neq \hat{i}}}^{n} (\w(\Psi(\y^\delta)))_i(| \D \x' |)_i + (\w(\Psi(\y^\delta)))_{\hat{i}}(| \D \x' |)_{\hat{i}} = \mathcal{R}_{\Psi, \delta}(\x'),
    \end{split}
    \end{align}

    where the three inequalities follow respectively by the Cauchy-Schwartz inequality applied to the vectors $\begin{bmatrix}\bar{\boldsymbol{g}}_h \\ \bar{\boldsymbol{g}}_v\end{bmatrix}$ and $\boldsymbol{W}_{\Psi, \delta} \begin{bmatrix}\D_h \x' \\ \D_v \x'\end{bmatrix}$, the observation that $(| \bar{\boldsymbol{g}} |)_i \leq 1$ for any $i \notin \bar{I}(\x^*_{\Psi, \delta})$, and the assumption that $|c| < 1$. Note that the above observation leads to a contradiction as it shows that $\mathcal{R}_{\Psi, \delta}(\x') < \mathcal{R}_{\Psi, \delta}(\x')$, which implies that $\x'$ has to lie in $S(\x^*_{\Psi, \delta})$. Therefore $\x' = \x^*_{\Psi, \delta}$, proving the uniqueness of the solution.
\end{proof}

The subsequent step involves analyzing the stability of the solution in relation to the noise present in the data.
Indeed, differently from \cite{scherzer2009variational}, where the regularizer is independent of the noise parameter, in our $\Psi$-$W\ell_1$ approach, the regularizer $\mathcal{R}_{\Psi, \delta}$ also depends on $\delta$. To the best of our knowledge, this approach has only been theoretically analyzed in \cite{bianchi2023data}, which, however, addresses a slightly different problem. We provide a full proof following the scheme of \cite[Theorem 3.2]{scherzer2009variational}. 

\begin{lemma}\label{lemma:delta_1_2_inequality}
    For any $\delta_1, \delta_2 \geq 0$, any $\Psi$ and any $\x \in \X$, it holds:
    \begin{align}
        \mathcal{J}_{\Psi, \delta_1}(\x) \leq 2 \mathcal{J}_{\Psi, \delta_2}(\x) + 2 || \y^{\delta_1} - \y^{\delta_2} ||_2^2 + \lambda || \left(\w(\Psi(\y^{\delta_1})) - \w(\Psi(\y^{\delta_2})) \right) ||_1 || \> | \D \x | \> ||_1
    \end{align}
\end{lemma}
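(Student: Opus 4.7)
The plan is to split $\mathcal{J}_{\Psi,\delta_1}(\x)$ into its two constituents (fidelity and weighted regularization), bound each in terms of the corresponding piece with subscript $\delta_2$ by inserting the ``pivot'' quantities $\y^{\delta_2}$ and $\w(\Psi(\y^{\delta_2}))$, and finally reassemble the estimates into $2\mathcal{J}_{\Psi,\delta_2}(\x)$ plus the two error terms displayed in the statement.

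First, for the fidelity term, I would write $\K\x - \y^{\delta_1} = (\K\x - \y^{\delta_2}) + (\y^{\delta_2} - \y^{\delta_1})$ and apply the standard quadratic inequality $\|\boldsymbol{a}+\boldsymbol{b}\|_2^2 \leq 2\|\boldsymbol{a}\|_2^2 + 2\|\boldsymbol{b}\|_2^2$ (a consequence of the parallelogram identity). This yields
\begin{equation*}
\|\K\x - \y^{\delta_1}\|_2^2 \leq 2\|\K\x - \y^{\delta_2}\|_2^2 + 2\|\y^{\delta_1} - \y^{\delta_2}\|_2^2,
\end{equation*}
which gives exactly the $2\|\y^{\delta_1}-\y^{\delta_2}\|_2^2$ term and a $2$ in front of the fidelity part of $\mathcal{J}_{\Psi,\delta_2}(\x)$.

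Second, for the regularization term, the idea is to add and subtract $\w(\Psi(\y^{\delta_2}))$ inside the weighted $\ell_1$-norm and use the triangle inequality:
\begin{equation*}
\|\w(\Psi(\y^{\delta_1})) \odot |\D\x|\|_1 \leq \|\w(\Psi(\y^{\delta_2})) \odot |\D\x|\|_1 + \|(\w(\Psi(\y^{\delta_1})) - \w(\Psi(\y^{\delta_2}))) \odot |\D\x|\|_1.
\end{equation*}
For the residual term I would expand the $\ell_1$-norm componentwise and use $|a_i b_i| \leq \|\boldsymbol{b}\|_\infty |a_i| \leq \|\boldsymbol{b}\|_1 |a_i|$ to obtain
\begin{equation*}
\|(\w(\Psi(\y^{\delta_1})) - \w(\Psi(\y^{\delta_2}))) \odot |\D\x|\|_1 \leq \|\w(\Psi(\y^{\delta_1})) - \w(\Psi(\y^{\delta_2}))\|_1 \,\| \,|\D\x|\, \|_1,
\end{equation*}
which provides the third summand in the claim.

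Finally, I would sum the two bounded pieces. The resulting estimate contains $2\|\K\x-\y^{\delta_2}\|_2^2 + \lambda \|\w(\Psi(\y^{\delta_2})) \odot |\D\x|\|_1$, which is dominated by $2\mathcal{J}_{\Psi,\delta_2}(\x) = 2\|\K\x-\y^{\delta_2}\|_2^2 + 2\lambda \|\w(\Psi(\y^{\delta_2})) \odot |\D\x|\|_1$ because the weighted TV term is non-negative. This completes the bound. There is no real obstacle here; the only thing to keep straight is the bookkeeping around the constant $2$ produced by the quadratic splitting, which forces the (slightly lossy) replacement of $\lambda\|\w(\Psi(\y^{\delta_2})) \odot |\D\x|\|_1$ by $2\lambda\|\w(\Psi(\y^{\delta_2})) \odot |\D\x|\|_1$ so that both pieces can be absorbed into $2\mathcal{J}_{\Psi,\delta_2}(\x)$.
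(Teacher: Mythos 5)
Your proposal is correct and follows essentially the same route as the paper's proof: split the fidelity term with $\|\boldsymbol{a}+\boldsymbol{b}\|_2^2 \leq 2\|\boldsymbol{a}\|_2^2+2\|\boldsymbol{b}\|_2^2$, apply the triangle inequality to the weighted $\ell_1$ term after inserting $\w(\Psi(\y^{\delta_2}))$, bound the residual by $\|\w(\Psi(\y^{\delta_1}))-\w(\Psi(\y^{\delta_2}))\|_1\,\|\,|\D\x|\,\|_1$, and absorb the leftover regularization term into $2\mathcal{J}_{\Psi,\delta_2}(\x)$ using its non-negativity. Your $\ell_\infty\!\leq\!\ell_1$ handling of the residual factor is in fact slightly cleaner than the paper's componentwise sum bound, since it does not implicitly assume non-negativity of the weight difference, but it is the same argument in substance.
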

\begin{proof}
    The proof is in Appendix \ref{app:delta_1_2_inequality}.
\end{proof}

The following lemma establishes the boundedness of the sequence $\{ \x^*_{\Psi, \delta_k} \}_{k \in \mathbb{N}}$ which consists of the minimizers of 
 $\mathcal{J}_{\Psi, \delta_k}(\x)$.

\begin{lemma}\label{lemma:minimizers_bounded}
    Let $\{ \delta_k \}_{k \in \mathbb{N}}$ be any sequence of noise levels such that $\delta_k \to 0$ as $k \to \infty$. For any $k \in \mathbb{N}$, let $\x^*_{\Psi, \delta_k}$ be the unique minimizer of $\mathcal{J}_{\Psi, \delta_k}(\x)$. Then, the sequence $\{ \x^*_{\Psi, \delta_k} \}_{k \in \mathbb{N}}$ is bounded.
\end{lemma}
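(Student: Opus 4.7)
The strategy is to exploit the minimizing property of each $\x^*_{\Psi, \delta_k}$ by comparing with the simple test point $\boldsymbol{0} \in \X$, derive from the resulting functional bound separate uniform controls on $\|\K \x^*_{\Psi, \delta_k}\|_2$ and $\|\D \x^*_{\Psi, \delta_k}\|_2$, and finally convert these into a bound on $\|\x^*_{\Psi, \delta_k}\|_2$ via Assumption \ref{assumption:kernels}. The main technical ingredient that makes everything uniform in $k$ is the Lipschitz continuity of the reconstructor $\Psi$, which forces the weight vector $\w(\Psi(\y^{\delta_k}))$ to stay uniformly bounded away from $\boldsymbol{0}$.

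Since $\boldsymbol{0} \in \X$ and $\mathcal{R}_{\Psi, \delta_k}(\boldsymbol{0}) = 0$, one has $\mathcal{J}_{\Psi, \delta_k}(\boldsymbol{0}) = \|\y^{\delta_k}\|_2^2 \leq (\|\K \x^{GT}\|_2 + \delta_k)^2 \leq C^2$ for some constant $C$ independent of $k$. Minimality of $\x^*_{\Psi, \delta_k}$ then gives $\|\K \x^*_{\Psi, \delta_k} - \y^{\delta_k}\|_2^2 \leq C^2$ and $\lambda \mathcal{R}_{\Psi, \delta_k}(\x^*_{\Psi, \delta_k}) \leq C^2$ simultaneously; the triangle inequality applied to the first estimate yields $\|\K \x^*_{\Psi, \delta_k}\|_2 \leq 2C$.

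To transfer the bound on $\mathcal{R}_{\Psi, \delta_k}$ into a bound on $\|\D \x^*_{\Psi, \delta_k}\|_2$, I would extract a uniform positive lower bound on the weights. Lipschitz continuity of $\Psi$, together with $\|\y^{\delta_k} - \K\x^{GT}\|_2 \leq \delta_k \to 0$, implies that the sequence $\{\Psi(\y^{\delta_k})\}_k$ is bounded in $\R^n$, hence $(|\D \Psi(\y^{\delta_k})|)_i \leq B$ uniformly in $k$ and $i$ for some $B > 0$. Because the scalar map $t \mapsto (\eta/\sqrt{\eta^2 + t^2})^{1-p}$ is decreasing on $[0, \infty)$, formula \eqref{eq:OurWeights} gives the uniform estimate $(\w(\Psi(\y^{\delta_k})))_i \geq w_{\min} := (\eta/\sqrt{\eta^2 + B^2})^{1-p} > 0$. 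Combining with the elementary inequality $\|v\|_1 \geq \|v\|_2$ applied to the non-negative vector $|\D \x^*_{\Psi, \delta_k}|$, one obtains $\mathcal{R}_{\Psi, \delta_k}(\x^*_{\Psi, \delta_k}) \geq w_{\min} \, \||\D \x^*_{\Psi, \delta_k}|\|_1 \geq w_{\min} \, \|\D \x^*_{\Psi, \delta_k}\|_2$, so $\|\D \x^*_{\Psi, \delta_k}\|_2$ is uniformly bounded as well.

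Finally, set $\boldsymbol{M} := [\K; \D]$: Assumption \ref{assumption:kernels} ensures $\ker(\boldsymbol{M}) = \ker(\K) \cap \ker(\D) = \{\boldsymbol{0}\}$, so $\boldsymbol{M}^T \boldsymbol{M}$ is positive definite and there exists $\sigma > 0$ such that $\sigma \|\x\|_2 \leq \sqrt{\|\K\x\|_2^2 + \|\D\x\|_2^2}$ for every $\x \in \R^n$. Applying this inequality to $\x^*_{\Psi, \delta_k}$ and using the bounds from the previous two paragraphs yields the desired uniform bound on $\|\x^*_{\Psi, \delta_k}\|_2$. The one non-routine step is securing the lower bound $w_{\min}$: without the uniform control on $\Psi(\y^{\delta_k})$ provided by Lipschitz continuity, the weights could a priori degenerate as $k \to \infty$, and the regularizer would lose its coercive strength on the $\D$-seminorm; everything else amounts to routine manipulation of the minimality inequality $\mathcal{J}_{\Psi, \delta_k}(\x^*_{\Psi, \delta_k}) \leq \mathcal{J}_{\Psi, \delta_k}(\boldsymbol{0})$.
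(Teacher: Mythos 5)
Your proof is correct, but it takes a genuinely different route from the paper's. The paper proceeds by contradiction: if $\{\x^*_{\Psi,\delta_k}\}_k$ were unbounded, the coercivity of $\mathcal{J}_{\Psi,\delta_k}$ (Lemma \ref{lemma:coercivity}) would force $\mathcal{J}_{\Psi,\delta_k}(\x^*_{\Psi,\delta_k})\to\infty$, while evaluating the objective at a fixed $\bar{\x}\in\ker(\D)\cap\X$ and using the splitting of Lemma \ref{lemma:delta_1_2_inequality} shows the optimal values remain bounded as $\delta_k\to 0$, a contradiction. You instead give a direct quantitative argument: comparison with the admissible test point $\boldsymbol{0}\in\X$ (itself an element of $\ker(\D)$, so conceptually the same test-point idea) yields $k$-uniform bounds on $\|\K\x^*_{\Psi,\delta_k}\|_2$ and on $\mathcal{R}_{\Psi,\delta_k}(\x^*_{\Psi,\delta_k})$; the Lipschitz continuity of $\Psi$ together with $\|\y^{\delta_k}-\y^0\|_2\le\delta_k\to 0$ gives a uniform upper bound on $|\D\Psi(\y^{\delta_k})|$ and hence, by monotonicity of the weight formula \eqref{eq:OurWeights}, a uniform lower bound $w_{\min}>0$ on the weights, which converts the bound on $\mathcal{R}_{\Psi,\delta_k}$ into a bound on $\|\D\x^*_{\Psi,\delta_k}\|_2$; finally Assumption \ref{assumption:kernels}, through the smallest singular value of $\M=[\K;\D]$, turns the two bounds into an explicit bound on $\|\x^*_{\Psi,\delta_k}\|_2$. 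What your approach buys is precisely this explicitness: the paper's contradiction step tacitly needs the coercivity to be uniform in $k$ (the functionals vary with $k$ through $\y^{\delta_k}$ and the weights), and your lower bound $w_{\min}$ is exactly the ingredient that makes that uniformity transparent, at the price of a somewhat longer argument; the paper's version is shorter and recycles lemmas already proved, but leaves that uniformity implicit.
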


\begin{proof}
    The proof is in Appendix \ref{app:minimizers_bounded}.
\end{proof}

\begin{theorem}[Noise Stability]\label{prop:noise_stability}
    Let $\{ \delta_k \}_{k \in \mathbb{N}}$ be any sequence of positive noise levels such that $\delta_k \to 0$ as $k \to \infty$. For any $k \in \mathbb{N}$, let $\x^*_{\Psi, \delta_k}$ be the unique minimizer of $\mathcal{J}_{\Psi, \delta_k}(\x)$. Then $\{ \x^*_{\Psi, \delta_k} \}_{k \in \mathbb{N}}$ has a convergent subsequence, whose limit point corresponds to $\x^*_{\Psi, 0}$, i.e. the unique minimizer of $\mathcal{J}_{\Psi, 0}(\x)$ (i.e $\mathcal{J}_{\Psi, \delta}(\x)$ with $\delta = 0$).
\end{theorem}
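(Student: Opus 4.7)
The plan is to follow the classical Scherzer-type stability scheme, but carefully account for the fact that, unlike in the standard theory, the regularizer $\mathcal{R}_{\Psi,\delta}$ itself depends on the noisy data through $\w(\Psi(\y^\delta))$. The argument proceeds in three broad steps: extract a convergent subsequence using compactness, pass to the limit in the minimality inequality using continuity in both $\x$ and $\delta$, and finally invoke uniqueness.

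First, by Lemma \ref{lemma:minimizers_bounded}, the sequence $\{\x^*_{\Psi,\delta_k}\}_{k\in\N}$ is bounded in $\R^n$. Since $\X$ is closed (the non-negative orthant), Bolzano--Weierstrass yields a subsequence $\{\x^*_{\Psi,\delta_{k_j}}\}_{j\in\N}$ converging to some $\bar{\x}\in\X$. The goal becomes identifying $\bar{\x}$ with $\x^*_{\Psi,0}$, whose existence and uniqueness are guaranteed by Theorems \ref{prop:existence} and \ref{prop:unicity} applied with $\delta=0$.

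Second, I would establish joint continuity of $(\x,\delta)\mapsto \mathcal{J}_{\Psi,\delta}(\x)$ along the relevant sequence. Because $\y^{\delta_{k_j}}\to\y^0$ and $\Psi$ is Lipschitz continuous, $\Psi(\y^{\delta_{k_j}})\to\Psi(\y^0)$. The map $\tilde{\x}\mapsto \w(\tilde{\x})$ is continuous from its explicit formula \eqref{eq:OurWeights} (here the role of $\eta>0$ is crucial to guarantee continuity at points where the gradient magnitude vanishes), so $\w(\Psi(\y^{\delta_{k_j}}))\to \w(\Psi(\y^0))$. Combining this with $\x^*_{\Psi,\delta_{k_j}}\to\bar{\x}$ and continuity of the fidelity and weighted TV terms, one obtains
\begin{equation*}
\mathcal{J}_{\Psi,\delta_{k_j}}(\x^*_{\Psi,\delta_{k_j}}) \;\longrightarrow\; \|\K\bar{\x}-\y^0\|_2^2 + \lambda\,\|\w(\Psi(\y^0))\odot|\D\bar{\x}|\|_1 \;=\; \mathcal{J}_{\Psi,0}(\bar{\x}).
\end{equation*}
By the same continuity, for any fixed $\x\in\X$, $\mathcal{J}_{\Psi,\delta_{k_j}}(\x)\to\mathcal{J}_{\Psi,0}(\x)$. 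If the direct joint continuity argument proves delicate, one can instead bound the difference using Lemma \ref{lemma:delta_1_2_inequality} applied with $\delta_1=\delta_{k_j}$ and $\delta_2=0$ (and vice versa), since $\|\y^{\delta_{k_j}}-\y^0\|_2^2\to 0$ and $\|\w(\Psi(\y^{\delta_{k_j}}))-\w(\Psi(\y^0))\|_1\to 0$.

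Third, I would invoke minimality: for each $j$, $\mathcal{J}_{\Psi,\delta_{k_j}}(\x^*_{\Psi,\delta_{k_j}})\leq \mathcal{J}_{\Psi,\delta_{k_j}}(\x^*_{\Psi,0})$. Passing to the limit on both sides using the convergences above gives $\mathcal{J}_{\Psi,0}(\bar{\x})\leq \mathcal{J}_{\Psi,0}(\x^*_{\Psi,0})$. Since $\bar{\x}\in\X$ and $\x^*_{\Psi,0}$ is the unique minimizer (Theorem \ref{prop:unicity}), we conclude $\bar{\x}=\x^*_{\Psi,0}$, completing the proof.

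The main obstacle I anticipate is the joint passage to the limit on the weighted TV term, precisely because the weights vary with $\delta_{k_j}$ in lockstep with the argument $\x^*_{\Psi,\delta_{k_j}}$. The clean way to handle this is to split the weighted TV via the triangle-type bound provided by Lemma \ref{lemma:delta_1_2_inequality}, so that the $\delta$-dependence of $\w$ is separated from the $\x$-dependence; then continuity in each factor suffices. Verifying the continuity of $\w(\cdot)$ (relying on $\eta>0$) and the Lipschitz property of $\Psi$, both already built into our assumptions, ensures that no additional regularity hypothesis is needed.
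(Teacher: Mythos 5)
Your proposal is correct, and its endgame (pass to the limit in the minimality inequality, then invoke uniqueness of $\x^*_{\Psi,0}$) coincides with the paper's. Where you genuinely diverge is in how the convergent subsequence is produced and how the $\delta$-dependence of the regularizer is handled. You apply Lemma \ref{lemma:minimizers_bounded} directly: the minimizers form a bounded sequence in $\R^n$, $\X$ is closed, so Bolzano--Weierstrass gives a subsequence converging to some $\bar{\x}\in\X$; you then pass to the limit using joint continuity of $(\x,\y^\delta)\mapsto \|\K\x-\y^\delta\|_2^2+\lambda\|\w(\Psi(\y^\delta))\odot|\D\x|\|_1$, which indeed holds since $\|\y^{\delta_k}-\y^0\|_2\le\delta_k\to 0$, $\Psi$ is Lipschitz and $\w(\cdot)$ is continuous thanks to $\eta>0$. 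The paper instead follows the Scherzer scheme more literally: it applies Lemma \ref{lemma:delta_1_2_inequality} twice to bound $\mathcal{J}_{\Psi,0}(\x^*_{\Psi,\delta_k})$ by $4\mathcal{J}_{\Psi,0}(\bar{\x})+4\|\y^{\delta_k}-\y^0\|_2^2+\lambda g(\delta_k)\bigl(\| \, |\D\x^*_{\Psi,\delta_k}| \, \|_1+2\| \, |\D\bar{\x}| \, \|_1\bigr)$ (using Lemma \ref{lemma:minimizers_bounded} only to control the factor $\| \, |\D\x^*_{\Psi,\delta_k}| \, \|_1$), concludes that the minimizers eventually lie in a fixed level set of $\mathcal{J}_{\Psi,0}$, and extracts the subsequence from compactness of that level set. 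Your route is more elementary and perfectly adequate here because the problem is finite-dimensional, so boundedness already implies pre-compactness; the paper's level-set argument is the one that would survive in an infinite-dimensional setting where that implication fails, and it also isolates the data-dependence of the weights through an explicit inequality rather than through a joint-continuity claim. Your own remark that Lemma \ref{lemma:delta_1_2_inequality} can be used as a fallback to decouple the weight perturbation from the argument $\x$ is exactly the mechanism the paper uses, so nothing essential is missing from your argument.
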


\begin{proof}
    Note that, because $\x^*_{\Psi, \delta_k}$ is a minimizer of $\mathcal{J}_{\Psi, \delta_k}(\x)$, for any $\x \in \X$, it holds that $\mathcal{J}_{\Psi, \delta_k}(\x^*_{\Psi, \delta_k}) \leq \mathcal{J}_{\Psi, \delta_k}(\x)$. Let $\bar{\x}$ be any element in $\X$. By applying Lemma \ref{lemma:delta_1_2_inequality} twice, once with $\delta_1 = 0$ and $\delta_2 = \delta_k$, and then with $\delta_1 = \delta_k$ and $\delta_2 = 0$, it follows that
    \begin{align}
    \begin{split}
        \mathcal{J}_{\Psi, 0}(\x^*_{\Psi, \delta_k}) &\leq 2 \mathcal{J}_{\Psi, \delta_k}(\x^*_{\Psi, \delta_k}) + 2 || \y^{\delta_k} - \y^{0} ||_2^2 + \lambda || \w(\Psi(\y^{\delta_k})) - \w(\Psi(\y^{0}))  ||_1 || \> | \D \x^*_{\Psi, \delta_k} | \> ||_1 \\ & \leq  2 \mathcal{J}_{\Psi, \delta_k}(\bar{\x}) + 2 || \y^{\delta_k} - \y^{0} ||_2^2 + \lambda || \w(\Psi(\y^{\delta_k})) - \w(\Psi(\y^{0}))  ||_1 || \> | \D \x^*_{\Psi, \delta_k} | \> ||_1 \\ &\leq 4 \mathcal{J}_{\Psi, 0}(\bar{\x}) + 4 || \y^{\delta_k} - \y^{0} ||_2^2 + \lambda || \w(\Psi(\y^{\delta_k})) - \w(\Psi(\y^{0}))  ||_1 \left(|| \> | \D \x^*_{\Psi, \delta_k} | \> ||_1 + 2|| \> | \D \bar{\x} | \> ||_1 \right).
    \end{split}
    \end{align}

    By the continuity of $\w(\tilde{\x})$ with respect to $\tilde{\x}$ and the continuity of $\Psi(\y^\delta)$ with respect to $\y^\delta$, there exists a function $g(\delta)$ such that $g(\delta) \to 0$ as $\delta \to 0$, such that:
    \begin{align}
        || \w(\Psi(\y^{\delta})) - \w(\Psi(\y^{0}))  ||_1 \leq g(\delta),
    \end{align}
    for any $\delta \geq 0$ sufficiently close to $0$.  Therefore:
    \begin{align}
        \mathcal{J}_{\Psi, 0}(\x^*_{\Psi, \delta_k}) \leq 4 \mathcal{J}_{\Psi, 0}(\bar{\x}) + 4 \delta_k^2 + \lambda g(\delta_k) \left(|| \> | \D \x^*_{\Psi, \delta_k} | \> ||_1 + 2|| \> | \D \bar{\x} | \> ||_1 \right).
    \end{align}

    Note that, by Lemma \ref{lemma:minimizers_bounded}, the sequence $\{ \x^*_{\Psi, \delta_k} \}_{k \in \mathbb{N}}$ is bounded, and in particular, $|| \> | \D \x^*_{\Psi, \delta_k} | \> ||_1 \leq C$ for a constant $C > 0$. Consequently, for any sufficiently small value of $\delta_k$, the sequence $\mathcal{J}_{\Psi, 0}(\x^*_{\Psi, \delta_k})$ is bounded by:
    \begin{align}
        M := 4\mathcal{J}_{\Psi, 0}(\bar{\x}) + 1,
    \end{align}
    which implies that $\{ \x^*_{\Psi, \delta_k} \}_{k \in \mathbb{N}}$ is contained in the $M$-th level set of $\mathcal{J}_{\Psi, 0}$ for any $k \geq k_0$. By compactness of the level sets of $\mathcal{J}_{\Psi, 0}$, it follows that $\{ \x^*_{\Psi, \delta_k} \}_{k \in \mathbb{N}}$ has a convergent subsequence $\{ \x^*_{\Psi, \delta_{k_j}} \}_{j \in \mathbb{N}}$, converging to $\x^* \in \X$. To conclude, note that for any $\x \in \X$:

    \begin{align}
    \begin{split}
        \mathcal{J}_{\Psi, 0}(\x^*) &= || \K \x^* - \y^0 ||_2^2 + \lambda || \w(\Psi(\y^0)) \odot | \D \x^* |\> ||_1 \\ &= \lim_{j\to\infty} || \K \x^*_{\Psi, \delta_{k_j}} - \y^{\delta_{k_j}} ||_2^2 + \lambda || \w(\Psi(\y^{\delta_{k_j}})) \odot | \D \x^*_{\Psi, \delta_{k_j}} |\> ||_1 \\ &\leq \lim_{j\to\infty} || \K \x - \y^{\delta_{k_j}} ||_2^2 + \lambda || \w(\Psi(\y^{\delta_{k_j}})) \odot | \D \x |\> ||_1 \\ &= || \K \x - \y^0 ||_2^2 + \lambda || \w(\Psi(\y^0)) \odot | \D \x |\> ||_1 = \mathcal{J}_{\Psi, 0}(\x),
    \end{split}
    \end{align}

    which implies that $\x^*$ is a minimizer of $\mathcal{J}_{\Psi, 0}(\x^*)$ and $\x^* = \x^*_{\Psi, 0}$ for the uniqueness property, proved in Theorem \ref{prop:unicity}. 
    
\end{proof}

The following result constitutes the primary theoretical contribution of this paper. It demonstrates that if the chosen reconstructor $\Psi$, esponsible for computing the initial guess for the weight, is \emph{sufficiently accurate} (in the sense that it closely approximates a target reconstructor $\Psi^*$), then the solution $\x^*_{\Psi, \delta}$ of the $\Psi-W\ell_1$ will closely approximate the solution obtained using the \emph{ideal} reconstructor 
 $\Psi^*$. This concept will be formalized and further discussed in subsequent sections.

We need two more lemmas to prove the central theorem.
\begin{lemma}\label{lemma:psi_1_2_inequality}
    For any $\Psi_1, \Psi_2$, any $\delta\geq 0$, and any $\x \in \X$, it holds:
    \begin{align}
        \mathcal{J}_{\Psi_1, \delta}(\x) \leq \mathcal{J}_{\Psi_2, \delta}(\x) + \lambda || \left(\w(\Psi_1(\y^{\delta})) - \w(\Psi_2(\y^{\delta})) \right) ||_1 || \> | \D \x | \> ||_1
    \end{align}
\end{lemma}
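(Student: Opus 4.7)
The plan is to observe that the data-fidelity term $\|\K\x-\y^\delta\|_2^2$ in $\mathcal{J}_{\Psi,\delta}(\x)$ does not depend on the reconstructor $\Psi$, so it cancels identically when forming the difference $\mathcal{J}_{\Psi_1,\delta}(\x) - \mathcal{J}_{\Psi_2,\delta}(\x)$. The claim therefore reduces to the pointwise inequality
\begin{equation*}
    \|\w(\Psi_1(\y^\delta)) \odot |\D\x|\|_1 - \|\w(\Psi_2(\y^\delta)) \odot |\D\x|\|_1 \;\leq\; \|\w(\Psi_1(\y^\delta)) - \w(\Psi_2(\y^\delta))\|_1 \cdot \|\,|\D\x|\,\|_1,
\end{equation*}
after which adding $\mathcal{J}_{\Psi_2,\delta}(\x)$ on both sides recovers the statement.

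To establish this reduced inequality, I will first use Proposition \ref{prop:w_is_a_scale_term}, which guarantees that both $\w(\Psi_1(\y^\delta))$ and $\w(\Psi_2(\y^\delta))$ have strictly positive entries, together with the trivial fact that $(|\D\x|)_i \geq 0$. This lets me drop the outer absolute values in the two weighted $\ell_1$ norms and write the difference as a single sum
$\sum_{i=1}^n \bigl(\w(\Psi_1(\y^\delta))_i - \w(\Psi_2(\y^\delta))_i\bigr)\,(|\D\x|)_i$. Then I would apply the elementary chain
$\sum_i a_i b_i \leq \sum_i |a_i|\,b_i \leq \|a\|_1 \max_j b_j \leq \|a\|_1 \|b\|_1$, which is valid whenever $b_j \geq 0$, with $a_i = \w(\Psi_1(\y^\delta))_i - \w(\Psi_2(\y^\delta))_i$ and $b_i = (|\D\x|)_i$. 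This yields exactly the required bound.

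There is no substantive obstacle: the result is a direct Hölder-type estimate, and unlike Lemma \ref{lemma:delta_1_2_inequality} no factor of $2$ appears because the fidelity terms coincide exactly rather than being compared through a triangle inequality on $\|\y^{\delta_1}-\y^{\delta_2}\|_2$. The only points requiring attention are confirming that the fidelity term is genuinely $\Psi$-independent and that the non-negativity of the weights lets us insert absolute values at will; both have already been recorded earlier in the paper.
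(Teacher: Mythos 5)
Your proposal is correct and follows essentially the same route as the paper: after the $\Psi$-independent fidelity term cancels, everything reduces to the Hölder-type bound $\|(\w(\Psi_1(\y^\delta))-\w(\Psi_2(\y^\delta)))\odot|\D\x|\,\|_1 \leq \|\w(\Psi_1(\y^\delta))-\w(\Psi_2(\y^\delta))\|_1\,\|\,|\D\x|\,\|_1$, which is exactly the paper's key step. The only cosmetic difference is that the paper first applies the $\ell_1$ triangle inequality to the splitting $\w(\Psi_1)\odot|\D\x| = \w(\Psi_2)\odot|\D\x| + (\w(\Psi_1)-\w(\Psi_2))\odot|\D\x|$, whereas you expand the difference of the two weighted sums directly using the positivity of the weights; both arguments are equally valid.
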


\begin{proof}
    The proof is in Appendix \ref{app:psi_1_2_inequality}.
\end{proof}

\begin{lemma}\label{lemma:minimizers_bounded_2}
    Let $\{ \Psi_k \}_{k \in \mathbb{N}}$ be any sequence of reconstructors such that $\Psi_k \to \Psi$ as $k \to \infty$ in the sense of Theorem \ref{prop:reconstructor_stability}. For any $k \in \mathbb{N}$, let $\x^*_{\Psi_k, \delta}$ be the unique minimizer of $\mathcal{J}_{\Psi_k, \delta}(\x)$. Then, the sequence $\{ \x^*_{\Psi_k, \delta} \}_{k \in \mathbb{N}}$ is bounded.
\end{lemma}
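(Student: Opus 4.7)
The plan is to mimic the strategy used for Lemma \ref{lemma:minimizers_bounded}, but with the reconstructor index $k$ playing the role of the noise level: exploit that each $\x^*_{\Psi_k, \delta}$ minimizes $\mathcal{J}_{\Psi_k, \delta}$, use Lemma \ref{lemma:psi_1_2_inequality} to transfer information from the ``moving'' functional $\mathcal{J}_{\Psi_k, \delta}$ onto the fixed limit functional $\mathcal{J}_{\Psi, \delta}$, and then invoke the coercivity of $\mathcal{J}_{\Psi, \delta}$ (Lemma \ref{lemma:coercivity}) to turn the resulting bound into boundedness of $\x^*_{\Psi_k, \delta}$.

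Concretely, I would pick the convenient test point $\bar{\x} = \boldsymbol{0} \in \X$, for which $|\D \bar{\x}| = \boldsymbol{0}$, so that the minimization property gives the uniform bound
\begin{align*}
\mathcal{J}_{\Psi_k, \delta}(\x^*_{\Psi_k, \delta}) \leq \mathcal{J}_{\Psi_k, \delta}(\boldsymbol{0}) = \|\y^\delta\|_2^2.
\end{align*}
Applying Lemma \ref{lemma:psi_1_2_inequality} with $\Psi_1 = \Psi$ and $\Psi_2 = \Psi_k$, evaluated at $\x = \x^*_{\Psi_k, \delta}$, then yields
\begin{align*}
\mathcal{J}_{\Psi, \delta}(\x^*_{\Psi_k, \delta}) \leq \|\y^\delta\|_2^2 + \lambda \varepsilon_k \, \| \, |\D \x^*_{\Psi_k, \delta}| \, \|_1,
\end{align*}
where $\varepsilon_k := \| \w(\Psi(\y^\delta)) - \w(\Psi_k(\y^\delta)) \|_1$. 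By continuity of the weighting map $\tilde{\x} \mapsto \w(\tilde{\x})$ (read off directly from \eqref{eq:OurWeights}) and the convergence $\Psi_k \to \Psi$ in the sense of Theorem \ref{prop:reconstructor_stability}, one has $\varepsilon_k \to 0$.

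The next step is to remove the unwanted $\| \, |\D \x^*_{\Psi_k, \delta}| \, \|_1$ from the right-hand side by absorption. Since $\eta > 0$ and $\Psi(\y^\delta)$ is a fixed vector in $\R^n$, there exists a constant $w_{\min} > 0$ such that $(\w(\Psi(\y^\delta)))_i \geq w_{\min}$ for every $i$, which implies
\begin{align*}
\mathcal{J}_{\Psi, \delta}(\x^*_{\Psi_k, \delta}) \geq \lambda \| \w(\Psi(\y^\delta)) \odot | \D \x^*_{\Psi_k, \delta} | \|_1 \geq \lambda w_{\min} \| \, | \D \x^*_{\Psi_k, \delta} | \, \|_1.
\end{align*}
For all $k$ large enough to have $\varepsilon_k \leq w_{\min}/2$, the bad term can be absorbed into the left side, yielding a uniform bound on both $\|\K\x^*_{\Psi_k,\delta} - \y^\delta\|_2^2$ and $\| \, |\D \x^*_{\Psi_k, \delta}| \, \|_1$. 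Coercivity of $\mathcal{J}_{\Psi, \delta}$ (Lemma \ref{lemma:coercivity}, which uses Assumption \ref{assumption:kernels}) then forces $\|\x^*_{\Psi_k, \delta}\|_2$ to stay bounded; the finitely many initial indices excluded by the smallness condition on $\varepsilon_k$ contribute only individually finite vectors, so the whole sequence is bounded.

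The main obstacle is precisely this absorption step, since Lemma \ref{lemma:psi_1_2_inequality} bounds $\mathcal{J}_{\Psi, \delta}(\x^*_{\Psi_k, \delta})$ in terms of $\| \, |\D \x^*_{\Psi_k, \delta}| \, \|_1$, which is one of the quantities we are trying to control. Resolving this requires two ingredients to cooperate: a uniform lower bound $w_{\min} > 0$ on $\w(\Psi(\y^\delta))$ (furnished by $\eta > 0$) and the convergence $\varepsilon_k \to 0$ (which requires the notion of convergence $\Psi_k \to \Psi$ in Theorem \ref{prop:reconstructor_stability} to be strong enough, e.g.\ pointwise at $\y^\delta$, which is natural given the Lipschitz-continuous structure imposed on reconstructors). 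Once both are in place, the absorption closes and the argument matches the structure of Lemma \ref{lemma:minimizers_bounded}.
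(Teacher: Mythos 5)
Your argument is correct, but it follows a genuinely different route from the paper. The paper proves Lemma \ref{lemma:minimizers_bounded_2} by contradiction, exactly as it does Lemma \ref{lemma:minimizers_bounded}: if $\|\x^*_{\Psi_k,\delta}\|_2 \to \infty$, coercivity (Lemma \ref{lemma:coercivity}) is invoked to claim $\mathcal{J}_{\Psi_k,\delta}(\x^*_{\Psi_k,\delta}) \to \infty$, while any fixed $\bar{\x} \in \X \cap \ker(\D)$ gives $\mathcal{J}_{\Psi_k,\delta}(\bar{\x}) = \|\K\bar{\x} - \y^\delta\|_2^2$, a finite constant independent of $k$ (the regularizer vanishes there, so the changing weights never enter), contradicting minimality. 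You instead argue directly: test against $\boldsymbol{0} \in \X \cap \ker(\D)$ to get the uniform bound $\mathcal{J}_{\Psi_k,\delta}(\x^*_{\Psi_k,\delta}) \leq \|\y^\delta\|_2^2$, transfer to the \emph{fixed} functional $\mathcal{J}_{\Psi,\delta}$ via Lemma \ref{lemma:psi_1_2_inequality}, absorb the $\|\,|\D\x^*_{\Psi_k,\delta}|\,\|_1$ term using the uniform lower bound $w_{\min}>0$ on the limit weights together with $\varepsilon_k \to 0$, and conclude from boundedness of the sublevel sets of the single coercive functional $\mathcal{J}_{\Psi,\delta}$. What your version buys is precision at the delicate point: coercivity of each $\mathcal{J}_{\Psi_k,\delta}$ individually does not by itself imply $\mathcal{J}_{\Psi_k,\delta}(\x^*_{\Psi_k,\delta}) \to \infty$ along a sequence where the functional changes with $k$ — one needs some uniformity in $k$ (e.g.\ a $k$-independent lower bound on the weights for large $k$), which the paper leaves implicit; your absorption step supplies exactly this uniformity and in addition yields explicit bounds on $\|\K\x^*_{\Psi_k,\delta}-\y^\delta\|_2$ and $\|\,|\D\x^*_{\Psi_k,\delta}|\,\|_1$. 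What the paper's route buys is brevity and symmetry with Lemma \ref{lemma:minimizers_bounded}, avoiding Lemma \ref{lemma:psi_1_2_inequality} and the weight lower bound altogether. Both arguments are valid for this finite-dimensional setting; your only cosmetic slip is announcing that you will "mimic" the paper-style strategy of Lemma \ref{lemma:minimizers_bounded} while actually executing a different (direct) one.
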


\begin{proof}
    The proof is in Appendix \ref{app:minimizers_bounded_2}.
\end{proof}

\begin{theorem}[Reconstructor Stability]\label{prop:reconstructor_stability}
    Let $\{ \Psi_k \}_{k \in \mathbb{N}}$ be a sequence of reconstructors such that $\Psi_k \to \Psi^*$ as $k \to \infty$, meaning that $\sup_{\y^\delta \in \mathcal{Y}^\delta} || \Psi_k(\y^\delta) - \Psi^*(\y^\delta) ||_1 \to 0$ as $k \to \infty$, where $\mathcal{Y}^\delta = \{ \y^\delta \in \R^m; \inf_{\x \in \X} || \K\x - \y^\delta ||_2 \leq \delta \}$. Let $\x^*_{\Psi_k, \delta}$ be the unique minimizer of $\mathcal{J}_{\Psi_k, \delta}(\x)$, for a given $\delta \geq 0$. Then $\{ \x^*_{\Psi_k, \delta} \}_{k \in \mathbb{N}}$ has a convergent subsequence, whose limit point is $\x^*_{\Psi^*, \delta}$, i.e. the unique minimizer of $\mathcal{J}_{\Psi^*, \delta}(\x)$. 
\end{theorem}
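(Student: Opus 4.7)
The plan is to mirror the structure of the noise stability proof (Theorem \ref{prop:noise_stability}), replacing the variation in $\delta$ by a variation in the reconstructor and swapping Lemma \ref{lemma:delta_1_2_inequality} for Lemma \ref{lemma:psi_1_2_inequality}. The workflow is: (i) produce a uniform upper bound on $\mathcal{J}_{\Psi^*, \delta}(\x^*_{\Psi_k, \delta})$ via a ``sandwich'' argument that uses Lemma \ref{lemma:psi_1_2_inequality} twice, (ii) deduce that the sequence lies in a common sublevel set of $\mathcal{J}_{\Psi^*, \delta}$, (iii) extract a convergent subsequence by coercivity of $\mathcal{J}_{\Psi^*, \delta}$ (Lemma \ref{lemma:coercivity}), and (iv) identify its limit with $\x^*_{\Psi^*, \delta}$ by the minimality of $\x^*_{\Psi_k, \delta}$ passed to the limit, invoking the uniqueness Theorem \ref{prop:unicity}.

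More concretely, fix an arbitrary $\bar{\x} \in \X$. Applying Lemma \ref{lemma:psi_1_2_inequality} with $\Psi_1 = \Psi^*$ and $\Psi_2 = \Psi_k$, then using that $\x^*_{\Psi_k, \delta}$ minimizes $\mathcal{J}_{\Psi_k, \delta}$, and finally applying Lemma \ref{lemma:psi_1_2_inequality} a second time with $\Psi_1 = \Psi_k$ and $\Psi_2 = \Psi^*$ evaluated at $\bar{\x}$, I obtain
\begin{align}
    \mathcal{J}_{\Psi^*, \delta}(\x^*_{\Psi_k, \delta}) \leq \mathcal{J}_{\Psi^*, \delta}(\bar{\x}) + \lambda \, \|\w(\Psi_k(\y^\delta)) - \w(\Psi^*(\y^\delta))\|_1 \bigl( \|\,|\D \x^*_{\Psi_k, \delta}|\,\|_1 + \|\,|\D \bar{\x}|\,\|_1 \bigr).
\end{align}
By Lemma \ref{lemma:minimizers_bounded_2} the quantity $\|\,|\D \x^*_{\Psi_k, \delta}|\,\|_1$ is bounded uniformly in $k$, and by continuity of the map $\tilde{\x} \mapsto \w(\tilde{\x})$ together with the uniform convergence $\Psi_k \to \Psi^*$ on $\mathcal{Y}^\delta$, the factor $\|\w(\Psi_k(\y^\delta)) - \w(\Psi^*(\y^\delta))\|_1$ vanishes as $k \to \infty$. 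Hence, for $k$ large enough, $\mathcal{J}_{\Psi^*, \delta}(\x^*_{\Psi_k, \delta}) \leq M := \mathcal{J}_{\Psi^*, \delta}(\bar{\x}) + 1$.

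The tail of the sequence $\{\x^*_{\Psi_k, \delta}\}_{k \in \mathbb{N}}$ therefore lies in the $M$-th sublevel set of $\mathcal{J}_{\Psi^*, \delta}$, which is compact since $\mathcal{J}_{\Psi^*, \delta}$ is continuous and coercive by Lemma \ref{lemma:coercivity}. Extracting a convergent subsequence $\x^*_{\Psi_{k_j}, \delta} \to \x^*$ and passing to the limit in the minimality inequality $\mathcal{J}_{\Psi_{k_j}, \delta}(\x^*_{\Psi_{k_j}, \delta}) \leq \mathcal{J}_{\Psi_{k_j}, \delta}(\x)$ (for arbitrary $\x \in \X$), combined with continuity of $\w \circ \Psi_k$ and the uniform convergence $\Psi_k \to \Psi^*$, yields $\mathcal{J}_{\Psi^*, \delta}(\x^*) \leq \mathcal{J}_{\Psi^*, \delta}(\x)$ for every $\x \in \X$. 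Hence $\x^*$ is a minimizer of $\mathcal{J}_{\Psi^*, \delta}$, and by Theorem \ref{prop:unicity} we conclude $\x^* = \x^*_{\Psi^*, \delta}$.

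The main obstacle I expect is handling the limit passage cleanly on both sides: one must verify that $\mathcal{J}_{\Psi_{k_j}, \delta}(\x)$ converges to $\mathcal{J}_{\Psi^*, \delta}(\x)$ at every fixed $\x$ (this is essentially continuity of $\w$ composed with the Lipschitz map $\Psi_k$, together with uniform convergence of $\Psi_k$) and simultaneously that the left-hand side converges to $\mathcal{J}_{\Psi^*, \delta}(\x^*)$ along the subsequence (here we additionally need continuity of $\x \mapsto \mathcal{J}_{\Psi^*, \delta}(\x)$, which is immediate). A secondary subtlety is that the uniqueness conclusion requires the hypotheses of Theorem \ref{prop:unicity} to hold at $\x^*_{\Psi^*, \delta}$, so as in the noise-stability statement this assumption is implicit; otherwise only convergence to \emph{some} minimizer is obtained.
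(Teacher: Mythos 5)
Your proposal is correct and follows essentially the same route as the paper's proof: the same double application of Lemma \ref{lemma:psi_1_2_inequality} sandwiched around the minimality of $\x^*_{\Psi_k,\delta}$, boundedness via Lemma \ref{lemma:minimizers_bounded_2}, compact sublevel sets of $\mathcal{J}_{\Psi^*,\delta}$ to extract a convergent subsequence, and the same limit passage plus Theorem \ref{prop:unicity} to identify the limit. The only differences are cosmetic (the paper bounds $\|\w(\Psi_k(\y^\delta)) - \w(\Psi^*(\y^\delta))\|_1$ via the Lipschitz constant $L_{\w}$ rather than plain continuity, and uses a slightly different constant $M$), and your closing remark about the uniqueness hypotheses being implicitly assumed is an accurate observation that applies equally to the paper's own statement.
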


\begin{proof}
    The proof proceeds as in Theorem \ref{prop:noise_stability}. By applying Lemma \ref{lemma:psi_1_2_inequality} twice we get:
    \begin{align}
    \begin{split}
        \mathcal{J}_{\Psi^*, \delta}(\x^*_{\Psi_k, \delta}) &\leq \mathcal{J}_{\Psi_k, \delta}(\x^*_{\Psi_k, \delta}) + \lambda|| \w(\Psi_k(\y^\delta)) - \w(\Psi^*(\y^\delta) ||_1 || \> | \D\x^*_{\Psi_k, \delta} \ |\> ||_1 \\ &\leq \mathcal{J}_{\Psi_k, \delta}(\bar{\x}) + \lambda|| \w(\Psi_k(\y^\delta)) - \w(\Psi^*(\y^\delta)) ||_1 || \> | \D\x^*_{\Psi_k, \delta} \ |\> ||_1 \\ &\leq \mathcal{J}_{\Psi^*, \delta}(\bar{\x}) + \lambda|| \w(\Psi_k(\y^\delta)) - \w(\Psi^*(\y^\delta)) ||_1 \left(|| \> | \D\bar{\x} \ |\> ||_1 + || \> | \D\x^*_{\Psi_k, \delta} \ |\> ||_1 \right) \\ &\leq \mathcal{J}_{\Psi^*, \delta}(\bar{\x}) + \lambda L_{\w}|| \Psi_k(\y^\delta) - \Psi^*(\y^\delta) ||_1 \left(|| \> | \D\bar{\x} \ |\> ||_1 + || \> | \D\x^*_{\Psi_k, \delta} \ |\> ||_1 \right),
    \end{split}
    \end{align}
    where $L_{\w}$ is the Lipschitz constant of $\w$. Since $\{ \x^*_{\Psi^*, \delta}\}_{k \in \mathbb{N}}$ is bounded due to Lemma \ref{lemma:minimizers_bounded_2}, and since $|| \Psi_k(\y^\delta) - \Psi^*(\y^\delta) ||_1 \to 0$ as $k \to \infty$ by hypothesis, then for $k \geq k_0$, the sequence $\mathcal{J}_{\Psi^*, \delta}(\x^*_{\Psi_k, \delta})$ is bounded by $M$, defined as:
    \begin{align}
        M := 4\mathcal{J}_{\Psi^*, \delta}(\bar{\x}) + 1,
    \end{align}
    which implies that $\{ \x^*_{\Psi_k, \delta} \}_{k \in \mathbb{N}}$ is contained in the $M$-th level set of $\mathcal{J}_{\Psi^*, \delta}$, and in particular it has a convergent subsequence $\{ \x^*_{\Psi_{k_j}, \delta} \}_{j \in \mathbb{N}}$, converging to $\x^* \in \X$. 

    Since, for any $\x \in \X$,
    \begin{align}
    \begin{split}
        \mathcal{J}_{\Psi^*, \delta}(\x^*) &= || \K \x^* - \y^\delta ||_2^2 + \lambda || \w(\Psi^*(\y^\delta)) \odot | \D \x^* |\> ||_1 \\ &= \lim_{j\to\infty} || \K \x^*_{\Psi_{k_j}, \delta} - \y^{\delta} ||_2^2 + \lambda || \w(\Psi_{k_j}(\y^{\delta})) \odot | \D \x^*_{\Psi_{k_j}, \delta} |\> ||_1 \\ &\leq \lim_{j\to\infty} || \K \x - \y^{\delta} ||_2^2 + \lambda || \w(\Psi_{k_j}(\y^{\delta})) \odot | \D \x | \> ||_1 \\ &= || \K \x - \y^\delta ||_2^2 + \lambda || \w(\Psi^*(\y^\delta)) \odot | \D \x |\> ||_1 = \mathcal{J}_{\Psi^*, \delta}(\x),
    \end{split}
    \end{align}
    then $\x^*$ is a minimizers of $\mathcal{J}_{\Psi^*, \delta}(\x)$, i.e. $\x^* = \x^*_{\Psi^*, \delta}$.
\end{proof}

Interestingly, it is possible to formulate Theorem \ref{prop:reconstructor_stability} with alternative hypothesis, shading light to a property of our framework which will be explored in the experimental section. Indeed, since our choice of $\w(\Psi(\y^\delta))$ only depends on the gradient of $\Psi(\y^\delta)$, stability holds if we assume that $||\ |\D \Psi_k (\y^\delta) | - | \D \Psi^* (\y^\delta) | \ ||_1 \to 0$ as $k \to \infty$. In particular:

\begin{corollary}\label{corollary:reconstructor_stability_D}
    Let $\{ \Psi_k \}_{k \in \mathbb{N}}$ be a sequence of reconstructors such that $\Psi_k \to \Psi^*$ as $k \to \infty$, meaning that $\sup_{\y^\delta \in \mathcal{Y}^\delta} || \> | \D \Psi_k (\y^\delta) | - | \D \Psi^* (\y^\delta) | \> ||_1 \to 0$ as $k \to \infty$, where $\mathcal{Y}^\delta = \{ \y^\delta \in \R^m; \inf_{\x \in \X} || \K\x - \y^\delta ||_2 \leq \delta \}$. Let $\x^*_{\Psi_k, \delta}$ be the unique minimizer of $\mathcal{J}_{\Psi_k, \delta}(\x)$, for a given $\delta \geq 0$. Then $\{ \x^*_{\Psi_k, \delta} \}_{k \in \mathbb{N}}$ has a convergent subsequence, whose limit point is $\x^*_{\Psi^*, \delta}$, i.e. the unique minimizer of $\mathcal{J}_{\Psi^*, \delta}(\x)$. 
\end{corollary}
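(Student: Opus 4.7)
The plan is to mirror the proof of Theorem \ref{prop:reconstructor_stability} almost verbatim, exploiting the key structural observation that the weight map in \eqref{eq:OurWeights} factors through the gradient magnitude. Concretely, define $\tilde{\w}: \R^n_{\geq 0} \to (0, 1]^n$ by $\tilde{\w}(\boldsymbol{u})_i := \left( \eta / \sqrt{\eta^2 + \boldsymbol{u}_i^2} \right)^{1-p}$, so that $\w(\Psi(\y^\delta)) = \tilde{\w}(|\D \Psi(\y^\delta)|)$. A direct calculation shows that $\tilde{\w}$ is Lipschitz continuous with some constant $L_{\tilde{\w}}$ (the derivative of each coordinate is uniformly bounded since $\eta > 0$). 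This is the substitute, in the gradient-magnitude picture, for the Lipschitz property of $\w \circ \Psi$ used in the proof of Theorem \ref{prop:reconstructor_stability}.

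With this in hand, I would first upgrade Lemma \ref{lemma:psi_1_2_inequality} to the form
\begin{equation*}
    \mathcal{J}_{\Psi_1, \delta}(\x) \leq \mathcal{J}_{\Psi_2, \delta}(\x) + \lambda L_{\tilde{\w}} \, \| \, |\D\Psi_1(\y^\delta)| - |\D\Psi_2(\y^\delta)| \, \|_1 \, \| \, |\D \x| \, \|_1,
\end{equation*}
which follows immediately by substituting the Lipschitz bound into the estimate already present in the proof of Lemma \ref{lemma:psi_1_2_inequality}. Analogously, Lemma \ref{lemma:minimizers_bounded_2} needs to be re-examined: the argument there bounds $\| \x^*_{\Psi_k, \delta} \|$ by comparing $\mathcal{J}_{\Psi_k, \delta}(\x^*_{\Psi_k, \delta})$ with $\mathcal{J}_{\Psi_k, \delta}(\bar{\x})$ for a fixed test point $\bar{\x}$, and controlling the resulting dependence on $\Psi_k$. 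Under the new hypothesis $\sup_{\y^\delta} \| \, |\D \Psi_k(\y^\delta)| - |\D \Psi^*(\y^\delta)| \, \|_1 \to 0$, the same bound goes through with $L_{\tilde{\w}}$ in place of $L_{\w}$, giving boundedness of $\{ \x^*_{\Psi_k, \delta} \}_{k \in \mathbb{N}}$.

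The rest of the proof then copies that of Theorem \ref{prop:reconstructor_stability} line by line. Applying the upgraded Lemma \ref{lemma:psi_1_2_inequality} twice (once to pass from $\Psi^*$ to $\Psi_k$ on the left-hand side and once to pass from $\Psi_k$ back to $\Psi^*$ on the right-hand side, using optimality of $\x^*_{\Psi_k, \delta}$ for $\mathcal{J}_{\Psi_k, \delta}$ at a fixed $\bar{\x} \in \X$), I obtain
\begin{equation*}
    \mathcal{J}_{\Psi^*, \delta}(\x^*_{\Psi_k, \delta}) \leq \mathcal{J}_{\Psi^*, \delta}(\bar{\x}) + \lambda L_{\tilde{\w}} \, \| \, |\D\Psi_k(\y^\delta)| - |\D\Psi^*(\y^\delta)| \, \|_1 \left( \| \, |\D\bar{\x}| \, \|_1 + \| \, |\D\x^*_{\Psi_k, \delta}| \, \|_1 \right).
\end{equation*}
Using the new boundedness lemma and the gradient-convergence hypothesis, the right-hand side is eventually bounded by $M := 4 \mathcal{J}_{\Psi^*, \delta}(\bar{\x}) + 1$, so $\{ \x^*_{\Psi_k, \delta} \}$ lies in the $M$-sublevel set of $\mathcal{J}_{\Psi^*, \delta}$, which is compact by coercivity (Lemma \ref{lemma:coercivity}). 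Extract a convergent subsequence $\x^*_{\Psi_{k_j}, \delta} \to \x^*$, and pass to the limit in $\mathcal{J}_{\Psi^*, \delta}(\x^*_{\Psi_{k_j}, \delta}) \leq \mathcal{J}_{\Psi_{k_j}, \delta}(\x) + o(1)$ (for any fixed $\x \in \X$), using continuity of the data term and of $\w(\Psi_k(\y^\delta)) = \tilde{\w}(|\D \Psi_k(\y^\delta)|)$ in the gradient-magnitude input, to conclude $\mathcal{J}_{\Psi^*, \delta}(\x^*) \leq \mathcal{J}_{\Psi^*, \delta}(\x)$ for all $\x \in \X$, hence $\x^* = \x^*_{\Psi^*, \delta}$ by Theorem \ref{prop:unicity}.

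The only delicate step is the verification that $\tilde{\w}$ is genuinely Lipschitz as a function of $|\D \Psi(\y^\delta)|$: since $\eta > 0$ the map $u \mapsto (\eta/\sqrt{\eta^2+u^2})^{1-p}$ has bounded derivative on $[0, \infty)$, so this is routine. Once that is established, no genuinely new idea is needed beyond the observation that every appearance of $\| \Psi_k - \Psi^* \|_1$ in the original argument can be replaced by $\| \, |\D \Psi_k| - |\D \Psi^*| \, \|_1$ without touching the structure of the proof.
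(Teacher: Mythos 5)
Your proposal is correct and follows essentially the same route as the paper: the paper's proof likewise reduces to the inequality $\| \w(\Psi_k(\y^\delta)) - \w(\Psi^*(\y^\delta)) \|_1 \leq L'_{\w} \| \, |\D \Psi_k(\y^\delta)| - |\D \Psi^*(\y^\delta)| \, \|_1$, where $L'_{\w}$ is the Lipschitz constant of the weights with respect to the gradient magnitude, and then repeats the argument of Theorem \ref{prop:reconstructor_stability}. Your additional checks (Lipschitz bound for $u \mapsto (\eta/\sqrt{\eta^2+u^2})^{1-p}$ and the re-examination of Lemma \ref{lemma:minimizers_bounded_2}, whose proof in fact goes through unchanged since it does not use the convergence of $\Psi_k$) simply make explicit what the paper leaves implicit.
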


\begin{proof}
    The proof proceeds as in Theorem \ref{prop:reconstructor_stability}, considering the inequality:

    \begin{align}
        || \w(\Psi_k(\y^\delta)) - \w(\Psi^*(\y^\delta)) ||_1 \leq L'_{\w}|| \> | \D \Psi_k(\y^\delta)| - | \D \Psi^*(\y^\delta) | \> ||_1,
    \end{align}
    where $L_{\w}'$ is the Lipschitz constant of $\w(\Psi(\y^\delta))$ with respect to $| \D \Psi(\y^\delta) |$.
\end{proof}

Another useful extension of Theorem \ref{prop:reconstructor_stability} is to consider the situation where $\Psi(\y^\delta) \approx \x^{GT}$ for any $\y^\delta$. Indeed, in Section \ref{ssec:Sintetica} we already observed that the solution $\x^*_{GT, \delta}$, obtained by computing the weights on $\x^{GT}$, approximates well the true solution $\x^{GT}$ even for high noise levels $\delta \geq 0$. Interestingly, Theorem \ref{prop:reconstructor_stability} shows that reconstructor stability holds in this case as well.

\begin{corollary}\label{corollary:reconstructor_stability_gt}
    Let $\y^\delta = \K \x + \e$ be a fixed vector of observations. Let $\{ \Psi_k \}_{k \in \mathbb{N}}$ be a sequence of reconstructors such that one of the following holds:
    \begin{enumerate}
        \item $|| \Psi_k (\y^\delta) - \x^{GT} ||_1 \to 0$ as $k \to \infty$.
        \item $|| \> | \D \Psi_k (\y^\delta) | - | \D \x^{GT} | \> ||_1 \to 0$ as $k \to \infty$.
    \end{enumerate} 
    Let $\x^*_{\Psi_k, \delta}$ be the unique minimizer of $\mathcal{J}_{\Psi_k, \delta}(\x)$, for a given $\delta \geq 0$. Then $\{ \x^*_{\Psi_k, \delta} \}_{k \in \mathbb{N}}$ has a convergent subsequence, whose limit point is $\x^*_{GT, \delta}$, i.e. the unique minimizer of $\mathcal{J}_{GT, \delta}(\x)$. 
\end{corollary}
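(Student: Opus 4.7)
The plan is to reduce both cases to the already-established Theorem \ref{prop:reconstructor_stability} and Corollary \ref{corollary:reconstructor_stability_D} by introducing an auxiliary ``limit reconstructor'' that returns the ground truth on the fixed observation $\y^\delta$. Concretely, I would define $\Psi^*: \R^m \to \R^n$ as the constant map $\Psi^*(\y) := \x^{GT}$ for all $\y \in \R^m$. Being constant, $\Psi^*$ is trivially Lipschitz-continuous (with constant $0$), hence $\Psi^* \in Rec_{m,n}$. At the fixed $\y^\delta$ in the statement one has $\w(\Psi^*(\y^\delta)) = \w(\x^{GT})$, so $\mathcal{J}_{\Psi^*, \delta}(\x) = \mathcal{J}_{GT, \delta}(\x)$ for every $\x \in \X$, and therefore $\x^*_{\Psi^*, \delta} = \x^*_{GT, \delta}$.

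Under hypothesis 1, the identification $\Psi^*(\y^\delta) = \x^{GT}$ translates the assumption $\|\Psi_k(\y^\delta) - \x^{GT}\|_1 \to 0$ into exactly $\|\Psi_k(\y^\delta) - \Psi^*(\y^\delta)\|_1 \to 0$. Since the functional $\mathcal{J}_{\Psi, \delta}(\x)$ depends on $\Psi$ only through its value at the single vector $\y^\delta$, the chain of inequalities in the proof of Theorem \ref{prop:reconstructor_stability} (which uses continuity/Lipschitz of $\w$, together with Lemma \ref{lemma:psi_1_2_inequality} and Lemma \ref{lemma:minimizers_bounded_2}) applies verbatim: only the pointwise convergence at this specific $\y^\delta$ is actually invoked inside the argument, the $\sup_{\y^\delta \in \mathcal{Y}^\delta}$ in the hypothesis being merely a uniform strengthening. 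Repeating the derivation yields a boundedness constant $M := 4 \mathcal{J}_{\Psi^*, \delta}(\bar{\x}) + 1$, so the sequence $\{\x^*_{\Psi_k, \delta}\}$ lies in an $M$-level set of $\mathcal{J}_{\Psi^*, \delta}$; compactness of sublevel sets gives a convergent subsequence with limit $\x^* \in \X$, and the lower-semicontinuity estimate at the end of the proof of Theorem \ref{prop:reconstructor_stability} (which uses continuity of $\w(\Psi_{k_j}(\y^\delta))$ with respect to $k_j$) identifies $\x^*$ as a minimizer of $\mathcal{J}_{\Psi^*, \delta}$. By uniqueness (Theorem \ref{prop:unicity}) this minimizer equals $\x^*_{\Psi^*, \delta} = \x^*_{GT, \delta}$.

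For hypothesis 2 the plan is identical but with Corollary \ref{corollary:reconstructor_stability_D} taking the role of Theorem \ref{prop:reconstructor_stability}. Indeed, $\||\D \Psi_k(\y^\delta)| - |\D \x^{GT}|\|_1 = \||\D \Psi_k(\y^\delta)| - |\D \Psi^*(\y^\delta)|\|_1 \to 0$, which is precisely the pointwise form of the hypothesis in that corollary, and the Lipschitz bound $\|\w(\Psi_k(\y^\delta)) - \w(\Psi^*(\y^\delta))\|_1 \leq L_{\w}' \||\D \Psi_k(\y^\delta)| - |\D \Psi^*(\y^\delta)|\|_1$ (used inside the proof of the corollary) then drives the same convergence argument as in Case 1.

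The only genuinely delicate point, and hence the main obstacle, is the mismatch between the supremum-over-$\mathcal{Y}^\delta$ phrasing of the earlier stability results and the single-$\y^\delta$ phrasing here. I would address this by explicitly noting that the proofs of Theorem \ref{prop:reconstructor_stability} and Corollary \ref{corollary:reconstructor_stability_D} evaluate $\Psi_k$ and $\Psi^*$ only at the specific $\y^\delta$ defining the functional $\mathcal{J}_{\cdot, \delta}$, so the uniform sup-hypothesis is never really exploited, and pointwise convergence at $\y^\delta$ (which is what we have) suffices. Once this observation is in place, the corollary follows immediately from either theorem, with the constant reconstructor $\Psi^*$ serving as the bridge from $\{\Psi_k\}$ to $\x^{GT}$.
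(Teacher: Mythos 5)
Your proposal is correct and takes essentially the same route as the paper, whose proof of this corollary is simply ``proceed as in Theorem~\ref{prop:reconstructor_stability} and Corollary~\ref{corollary:reconstructor_stability_D}'' with $\x^{GT}$ playing the role of $\Psi^*(\y^\delta)$. Your constant reconstructor $\Psi^*(\y)\equiv\x^{GT}$ and the explicit remark that those proofs only ever evaluate $\Psi_k$ and $\Psi^*$ at the single fixed $\y^\delta$ (so pointwise convergence suffices and the sup-hypothesis is never used) are just a clean formalization of exactly that reduction.
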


\begin{proof}
    The proof proceeds as in Theorem \ref{prop:reconstructor_stability} and Corollary \ref{corollary:reconstructor_stability_D}.
\end{proof}

We emphasize that the quantities minimized in assumptions 1 and 2 of the previous corollary are the same, up to the norm, as those minimized in the loss functions \eqref{eq:mseloss} and \ref{eq:gradloss}, respectively.

\section{Numerical Results \label{sec:numres}}

In this section, we validate our proposal with numerical experiments for different tomographic scenarios.

We have built digital simulations based on two different datasets of images, providing ground-truth samples. The first one is the COULE dataset\footnote{\url{https://www.kaggle.com/datasets/loiboresearchgroup/coule-dataset}})
which contains 
256$\times$256 grayscale images, with overlapping geometrical objects having uniform intensities. We used $N_{D} = 400$ images for training, and the test image we used as an exemplary sample is depicted in Figure \ref{fig:GT} with two zoomed crops focusing on objects with very low gray contrasts.
The second dataset comes from the real chest images of the Low Dose CT Grand Challenge, by the Mayo Clinic \cite{mccollough2016tu}. They are downscaled to $256\times 256$ pixel resolution. We used $N_{D} = 3305$ images for training. Figure \ref{fig:GT} also depicts a test image and a close-up of it, in a region containing both high-contrast chest bones, low-contrast soft tissues, and thin details inside the lung.

\begin{figure}[h]
\centering
\captionsetup[subfigure]{}
\subfloat[COULE test image]{
\begin{tikzpicture}
        \node [anchor=south west, inner sep=0] (image) at (0,0) {\includegraphics[width=0.18\linewidth]{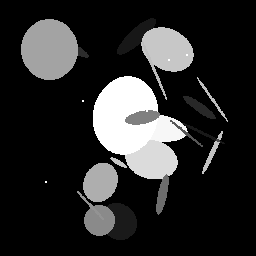}};
        \begin{scope}[x={(image.south east)}, y={(image.north west)}]
            \draw[red, thick] (0.22, 0.58) rectangle (0.52, 0.88);
            \draw[red, thick] (0.54, 0.38) rectangle (0.84, 0.68);
        \end{scope}
    \end{tikzpicture}  
\includegraphics[trim={15mm 38mm 30mm 7mm},clip, width=0.18\linewidth]{imm_Coule/ground_truth.png}
\includegraphics[trim={35mm 25mm 10mm 20mm},clip, width=0.18\linewidth]{imm_Coule/ground_truth.png} 
} \quad
\subfloat[Mayo test image]{
\begin{tikzpicture}
        \node [anchor=south west, inner sep=0] (image) at (0,0) {\includegraphics[width=0.18\linewidth]{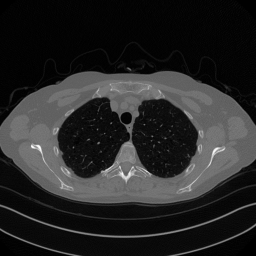}};
        \begin{scope}[x={(image.south east)}, y={(image.north west)}]
            \draw[red, thick] (0.15, 0.39) rectangle (0.45, 0.69);
        \end{scope}
    \end{tikzpicture}  
\includegraphics[trim={10mm 25mm 35mm 20mm},clip, width=0.18\linewidth]{imm_Mayo_005/mayoGT.png}
}
  \caption{Images used as ground truth in the numerical experiments with some zoom-ins remarking regions of interest. On the left, the $\x^{GT}$ coming from the test subset of COULE; on the right, the one from the Mayo Clinic dataset.}
  \label{fig:GT}
\end{figure}

For both datasets, noisy projections, denoted as $ \mathbf{y}_i^\delta $, were generated from the ground truth images $ \mathbf{x}_i^{GT} $, as previously described in Section \ref{ssec:Sintetica} for the synthetic images. In these experiments, a tomographic angular range of 180 degrees was considered for the body scan, along with a limited number $ N_v $ of projection views (usually, an acquisition is considered complete when the interval between the scan angles is less than 0.5 degrees).  The geometric configuration of the CT scanner is subsequently denoted as $ \mathcal{G}_{N_v} $.
All training procedures were conducted over 50 epochs using the Adam optimizer with its default parameters.

In all tests, we applied the methods previously evaluated on the synthetic image in Section \ref{ssec:Sintetica}, namely GT-$W\ell_1$, with $\tilde{\x}$ set to the ground truth image, as well as TV-$W\ell_1$ and FBP-$W\ell_1$, where $\tilde{\x}$ is obtained from the TV and FBP reconstruction methods, respectively.
Depending on the loss function used during network training, in the case of the image loss defined in \eqref{eq:mseloss}, the network output $\tilde{\mathbf{x}} $ is referred to as the \textit{FBP-Net} image, while the final solution is denoted as \textit{FBP-Net-$ W\ell_1 $}. Similarly, when the gradient loss in \eqref{eq:gradloss} is employed, $ \tilde{\mathbf{x}} $ is termed the \textit{FBP-GNet}, and the final solution is referred to as \textit{FBP-GNet-$ W\ell_1 $}.
To compare our weighted total variation (TV) approaches with those in the literature, we consider two well-known methods from the class of Iterative Reweighted strategies for the $ \ell_1 $ norm (IR$ \ell_1 $). 
 In the following, we denote by \textit{IR$\ell_1$-A} the method proposed in \cite{sidky2014cttpv}, based on the updating rule defined in equation \eqref{eq:weights_ir}, with $p=0$ and $\eta=2\cdot 10^{-3}$.
We denote by \textit{IR$\ell_1$-B} a variant of  \textit{IR$\ell_1$-A}, where, inspired by \cite{deng2015ct}, we use the following iterative update rule:
 \begin{equation}\label{eq:weights_ir_B}
   \left(\w(\x^{(k)})\right)_{i} :=  \exp{ \left(- \frac{ \left(  \D_h \x^{(k)} \right)_i^2 +  \left(  \D_v \x^{(k)}  \right)_i^2}{\eta^2} \right)}
 \end{equation}
Here, the superscript $ k $ denotes the $k$-th iteration of the iterative scheme, and $ \eta > 0 $ controls the strength of the diffusive regularization applied during each iteration of the solver. In all experiments, $ \eta $ was set heuristically to $ 6 \times 10^{-3} $.

%
\subsection{Experiments on COULE synthetic dataset}\label{ssec:Coule}

We first look at the results achieved on the synthetic image of the COULE dataset, reported in Figure \ref{fig:GT}. We remark that this image is not part of the training subset.
We have considered different tomographic protocols, given by $N_v=90$ sparse views with medium-high or high noise perturbations on the sinograms ($\nu = 0.03$ and $\nu=0.05$ respectively), or by only $N_v=45$ very sparse acquisitions with moderate noise ($\nu=0.01$).
We remark that the neural networks $\Psi_{\theta}$ have been trained separately, for each tomographic protocol and noise intensity.

All the model and algorithmic parameters have been tuned to minimize the RE metric on the final solutions. 
For the $\mathcal{G}_{90}$ case with $\nu=0.03$, we have set $\lambda=10$, $p=0.3$ and $\eta=2\cdot 10^{-5}$ in our $\Psi$-W$\ell_1$ models, and $\lambda = 2$ for the global TV.
In the iterative reweighted approaches, we set $\lambda=10$ and  $\lambda = 120$ respectively for IR$\ell_1$-A and IR$\ell_1$-B.
In case $\mathcal{G}_{90}$ and $\nu=0.05$, the regularization parameters have been increased to $\lambda=12$ for $\Psi$-W$\ell_1$, $\lambda =3$ for the global TV,  $ \lambda=12$ for IR$\ell_1$-A and $\lambda = 500$ for IR$\ell_1$-B.
For the $\mathcal{G}_{45}$ case with $\nu=0.01$, we have set $\lambda=5$, $p=0.3$ and $\eta=2\cdot 10^{-5}$ in $\Psi$-W$\ell_1$ model, whereas $\lambda = 1.2$ for the global TV,  $\lambda=5$ for IR$\ell_1$-A and $\lambda = 120$ for IR$\ell_1$-B.

Table \ref{tab:Coule_tutto} reports the relative errors computed with respect to the ground truth reference image, for the $\tilde{\x}$ images (when available, in the first column), the corresponding gradient images (when available, in the second column) and for the final solutions $\x^*_{\Psi, \delta}$ (third column). In the last column and only for our $\Psi$-W$\ell_1$ solutions, we report the RE with respect to $\x^*_{GT, \delta}$, to verify that if $\tilde{\x}$ approximates $\x^{GT}$, then $\x^*_{\Psi, \delta} \approx \x^*_{GT,\delta}$  as predicted by Corollary \ref{corollary:reconstructor_stability_gt}.
In this regard, we include the GT-$W\ell_1$ approach in the Table for theoretical reference only, and it should not be conceived as a competitor because it is not feasible for real applications. 
In Figure \ref{fig:Coule_90viste_nl03} we visualize (through zoomed crops) the solutions  obtained by the different methods within the $\mathcal{G}_{90}$ geometry with $\nu=0.03$. \\
First, we look at the RE($\x^*_{\Psi, \delta}, \x^{GT}$) column in the table, and observe that our FBP-Net-$W\ell_1$ and FBP-GNet-$W\ell_1$ always get the minimum errors, sensibly lower than the others. As expected, the global TV regularization is overcome by adaptive models, above all in the case of $\mathcal{G}_{90}$. Interestingly, the IR$\ell_1$ strategies consistently produce errors that do not approach the magnitude of ours, and the images reported exhibit a lower quality in the reconstructions. On the IR$\ell_1$-A image, the low contrast ellipses are slightly visible, whereas the boundaries of the structures are very degraded and noisy on the IR$\ell_1$-B solution. 
Secondly, by examining the values of RE($\tilde{\x}, \x^{GT}$), we observe the high accuracy of the $\tilde{\x}$ images produced by the neural networks, regardless of the loss function used during training. Furthermore, the error relative to the ground truth (GT) image remains low, even in the image gradient domain (see the RE($|\D\tilde{\x}|, |\D\x^{GT}|$) column), suggesting that our adaptive weights effectively account for structures and edges with high precision. Finally, as expected, the GT-$W\ell_1$ image is nearly identical to $\x^{GT}$, and the last column of the table demonstrates that our methods closely approach the solution to which the theoretical model is expected to converge.

\begin{table}[ht]
\caption{Performance results on a COULE image. In the first three columns, the relative error values computed with reference to the exact $\x^{GT}$ image; in the last column the RE values are relative to the output image $\x^*_{GT, \delta}$. }\label{tab:Coule_tutto}
\adjustbox{width=\textwidth}{
\begin{tabular*}{\textwidth}{ll rrrr}
\toprule
& & {\scriptsize RE($\tilde{\x}, \x^{GT}$)} & {\scriptsize  RE($|\D\tilde{\x}|, |\D\x^{GT}|$) } & {\scriptsize  RE($\x^*_{\Psi, \delta}, \x^{GT}$) } & {\scriptsize   RE($\x^*_{\Psi, \delta}, \x^*_{GT, \delta}$) } \\
\midrule
\multirow{ 8}{*}{\shortstack[l]{$\mathcal{G}_{90}$ with \\ $\nu=0.03$}}   
&  global TV &       - & - & 0.0936 & -  \\
& GT-$W\ell_1$ &              0 & 0 & 0.0249 &  0\\
& TV-$W\ell_1$ &          0.1225 & 0.5329 & 0.0742 & 0.0664\\
& FBP-W$\ell_1$ &         0.3993 & 2.2622 & 0.0914 & 0.0847 \\
& FBP-Net-$W\ell_1$ &     0.0807 & 0.3479 & 0.0636 & 0.0552\\
& FBP-GNet-$W\ell_1$  &   0.0958 & 0.2656 & 0.0555 & 0.0463\\
& IR$\ell_1$-A  &             - & - & 0.0723 & -  \\
& IR$\ell_1$-B  &             - & - & 0.1241 & - \\
\midrule
\multirow{ 8}{*}{ \shortstack[l]{$\mathcal{G}_{90}$ with\\ $\nu=0.05$} }  
&  global TV &       - & - & 0.1242 & -  \\
& GT-$W\ell_1$ &              0 & 0 & 0.0359 &  0\\
& TV-$W\ell_1$ &          0.1773 & 0.7777 & 0.1398 & 0.1320\\
& FBP-W$\ell_1$ &         0.6249 & 3.6167 & 0.1834 & 0.1772 \\
& FBP-Net-$W\ell_1$ &     0.1054 & 0.4713 & 0.1043  & 0.0963\\
& FBP-GNet-$W\ell_1$ &   0.1133 & 0.3940 & 0.1001 & 0.0897\\
& IR$\ell_1$-A &             - & - & 0.1139 & -  \\
& IR$\ell_1$-B &             - & - & 0.1523 & - \\
\midrule
\multirow{ 8}{*}{  \shortstack[l]{$\mathcal{G}_{45}$ with\\ $\nu=0.01$} } 
&  global TV &       - & - & 0.0789 & -  \\
& GT-$W\ell_1$ &              0 & 0 & 0.0180 &  0\\
& TV-$W\ell_1$ &          0.1112 & 0.4817 & 0.0847 & 0.0815\\
& FBP-W$\ell_1$ &         0.3486 & 1.5899 & 0.1039 & 0.1012 \\
& FBP-Net-$W\ell_1$ &     0.0904 & 0.3430 & 0.0762 & 0.0727\\
& FBP-GNet-$W\ell_1$ &   0.1309 & 0.3497 & 0.0665 & 0.0626\\
& IR$\ell_1$-A &             - & - & 0.0808 & -  \\
& IR$\ell_1$-B &             - & - & 0.1106 & - \\
\bottomrule
\end{tabular*}
}
\end{table}

\begin{figure}
\centering

\subfloat[\footnotesize	 GT-$W\ell_1$]{
\begin{tabular}{l}  
\includegraphics[trim={15mm 38mm 30mm 7mm},clip, width=0.18\linewidth]{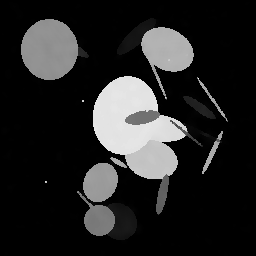}\\
\includegraphics[trim={35mm 25mm 10mm 20mm},clip, width=0.18\linewidth]{imm_Coule/sol_true_sqrt.png}
\end{tabular}
}\hspace{-1.5em}
\subfloat[\footnotesize	 FBP-Net-$W\ell_1$]{
\begin{tabular}{l}
\includegraphics[trim={15mm 38mm 30mm 7mm},clip, width=0.18\linewidth]{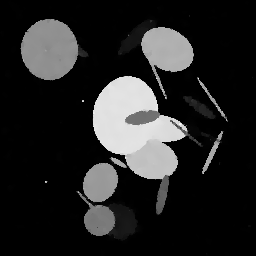}\\
\includegraphics[trim={35mm 25mm 10mm 20mm},clip, width=0.18\linewidth]{imm_Coule/sol_FBP-Net_sqrt.png}
\end{tabular}
}\hspace{-1.5em}
\subfloat[\footnotesize	 FBP-GNet-$W\ell_1$]{
\begin{tabular}{l}
\includegraphics[trim={15mm 38mm 30mm 7mm},clip, width=0.18\linewidth]{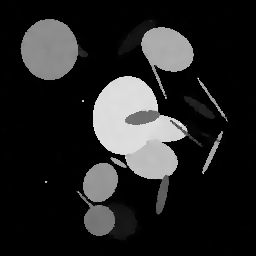}\\
\includegraphics[trim={35mm 25mm 10mm 20mm},clip, width=0.18\linewidth]{imm_Coule/sol_FBP-GNet_sqrt.png}
\end{tabular}
}\hspace{-1.5em}
\subfloat[\footnotesize	 IR$\ell_1$-A]{
\begin{tabular}{l}
\includegraphics[trim={15mm 38mm 30mm 7mm},clip, width=0.18\linewidth]{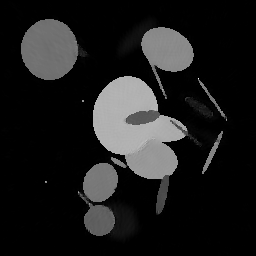}\\
\includegraphics[trim={35mm 25mm 10mm 20mm},clip, width=0.18\linewidth]{imm_Coule/sol_IRCP-TpV_FBP-Net_sqrt.png}
\end{tabular}
}\hspace{-1.5em}
\subfloat[\footnotesize	 IR$\ell_1$-B]{
\begin{tabular}{l}
\includegraphics[trim={15mm 38mm 30mm 7mm},clip, width=0.18\linewidth]{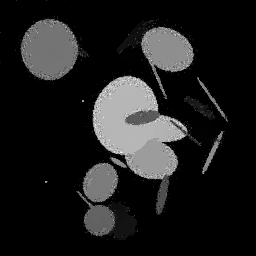}\\ 
\includegraphics[trim={35mm 25mm 10mm 20mm},clip, width=0.18\linewidth]{imm_Coule/sol_IRCP-Exp.png} 
\end{tabular}
}
  \caption{From left to right: crops of the solution images computed (in case $\mathcal{G}_{90}$ and noise with $\nu=0.03$) by our GT-$W\ell_1$, FBP-Net-$W\ell_1$, FBP-GNet-$W\ell_1$ and by the state-of-the-art methods IR$\ell_1$-A and IR$\ell_1$-B. The depicted zooms correspond to those reported in Figure \ref{fig:GT}. }
  \label{fig:Coule_90viste_nl03}
\end{figure}

\subsection{Experiments on Mayo real medical image dataset}\label{ssec:Mayo}

\begin{table}[b]
\caption{Performance results on the real image with $\mathcal{G}_{45}$ geometry and $\nu=0.005$. In the first three columns the metrics refer to the $\tilde{\x}$ image, in the last three columns they refer to the output image $\x^*_{\Psi,\delta}$. All the metrics are relative to the ground truth image.}\label{tab:Mayo005}%
\begin{tabular*}{\textwidth}{@{\extracolsep\fill}l rrr rrr} 
\toprule
 & \multicolumn{3}{@{}c@{}}{ $\tilde{\x} $} & \multicolumn{3}{@{}c@{}}{$\x^*_{\Psi,\delta}$}\\
 & RE & PSNR   & SSIM & RE & PSNR   & SSIM\\
\cmidrule{2-4} \cmidrule{5-7}
\midrule
global TV  & - & - & - & 0.1196 & 30.7946 & 0.8591 \\
FBP-$W\ell_1$                     & 0.2879 & 23.1629 & 0.4133         & 0.1009 & 32.2698 & 0.8832  \\
FBP-Net-$W\ell_1$     & 0.1038 & 32.0253 & 0.8588         & 0.0893 & 33.3329 & 0.9009  \\ 
FBP-GNet-$W\ell_1$     & 0.5092 & 18.2113 & 0.3843         & 0.0834 & 33.9224 & 0.9128  \\   
 IR$\ell_1$-A & - & - & - & 0.1293 & 30.1179 & 0.8274 \\
 IR$\ell_1$-B & - & - & - & 0.1027 & 32.1152 & 0.8717 \\
\bottomrule
\end{tabular*}
\end{table}

We now present the numerical results obtained from the analysis of the real chest tomographic image shown in Figure \ref{fig:GT}. We highlight that this image is not included in the training subset. 
As visible, low-contrast regions and high-contrast tiny details in the lungs characterize the image.
Consequently, the gradient image is less sparse compared to the previous synthetic case. The objective of the following experiments is twofold: to evaluate the performance of the proposed methods on a real image and to examine the behavior of the deep network-based reconstructions under varying levels of noise in the sinograms.\\
We initially consider 45 projections within the angular range $[0, 180]$ degrees, introducing noise to the measurements with a noise level of $\nu = 0.005$.
We set $\lambda = 0.8$, $p=0.3$ and $\eta=2\cdot 10^{-3}$ in our approach and for the global TV. We also set $\lambda = 0.05$ for the IR$\ell_1$-A algorithm and $\lambda = 50$ for IR$\ell_1$-B.

In Table \ref{tab:Mayo005} we report the image quality assessment metrics for both the $\tilde{\x}$ and the solution $\x^*_{\Psi, \delta}$. 
All the metrics reflect that our proposal overcomes the widely used IR strategy of adaptive regularization.
It is also evident how the embedding of a deep neural network improves the final reconstruction quality, with respect to the global TV regularization and to the mere FBP operators within our $\Psi$-$W\ell_1$ framework. \\
The benefit of training the reconstructor with the gradient loss is noticeable (final $RE=0.0834$)  even if the corresponding $\tilde{\x}$ image is not an accurate approximation of $\x^{GT}$ (its relative error is $RE=0.5092$), as theoretically predicted in Corollary  \ref{corollary:reconstructor_stability_gt}.
We have also computed the RE between the gradient images, i.e., $RE(|\D\tilde{\x}|, |\D\x^{GT}|)$, for the three considered $\Psi$-$W\ell_1$ cases.  The results are $1.1687$ for FBP-$W\ell_1$, $0.4309$ for FBP-Net-$W\ell_1$ and $0.3298$ for FBP-GNet-$W\ell_1$, confirming that the gradient image is best approximated by the neural network trained with the gradient loss.
The benefit of using the gradient loss is noticeable   in the left plot of Figure \ref{fig:MayoPlots}, where we compare the relative errors of the FBP-$W\ell_1$, FBP-Net-$W\ell_1$ and FBP-GNet-$W\ell_1$ approaches over the iterations.

\begin{figure}
\centering
\includegraphics[width=0.4\linewidth]{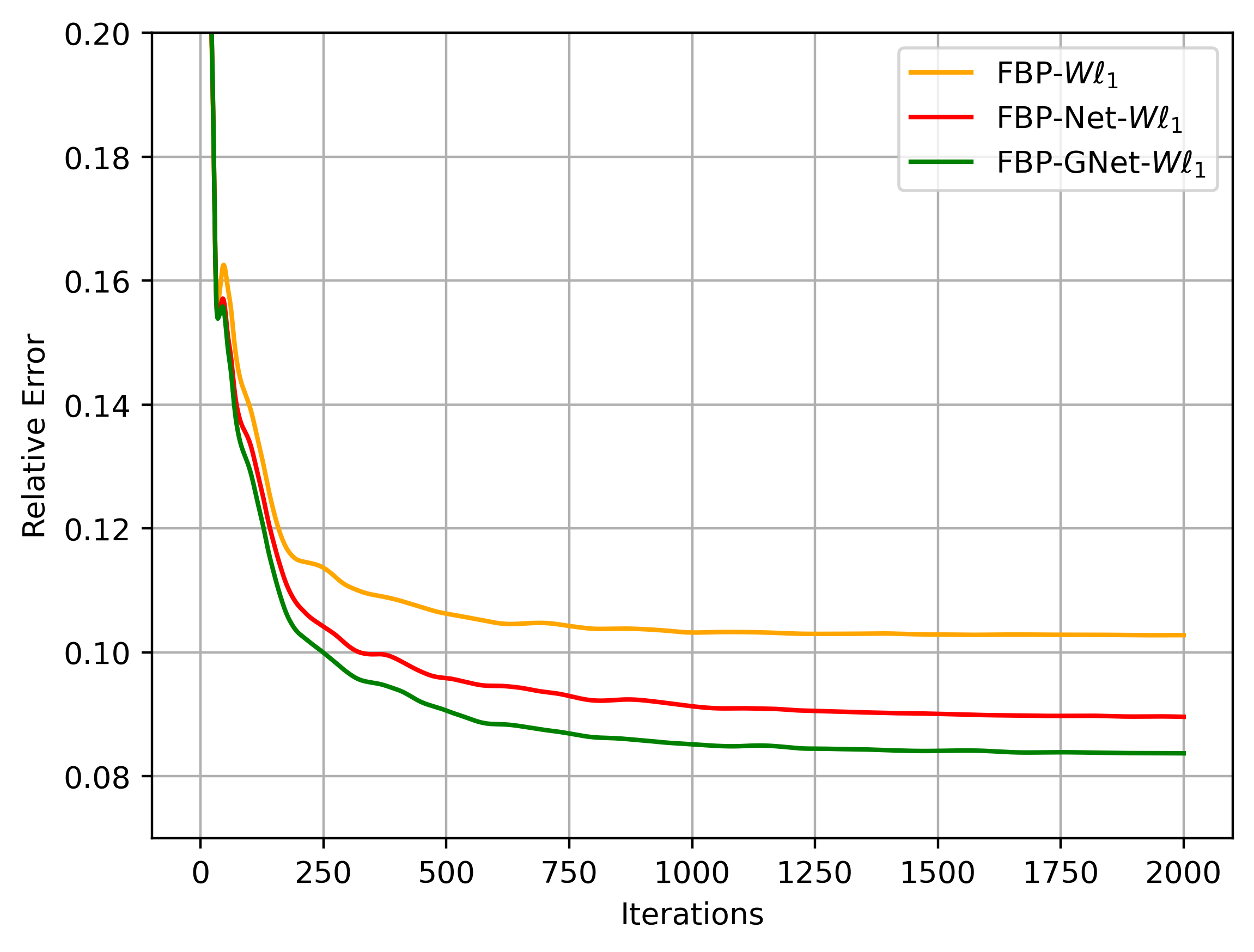}
\includegraphics[width=0.4\linewidth]{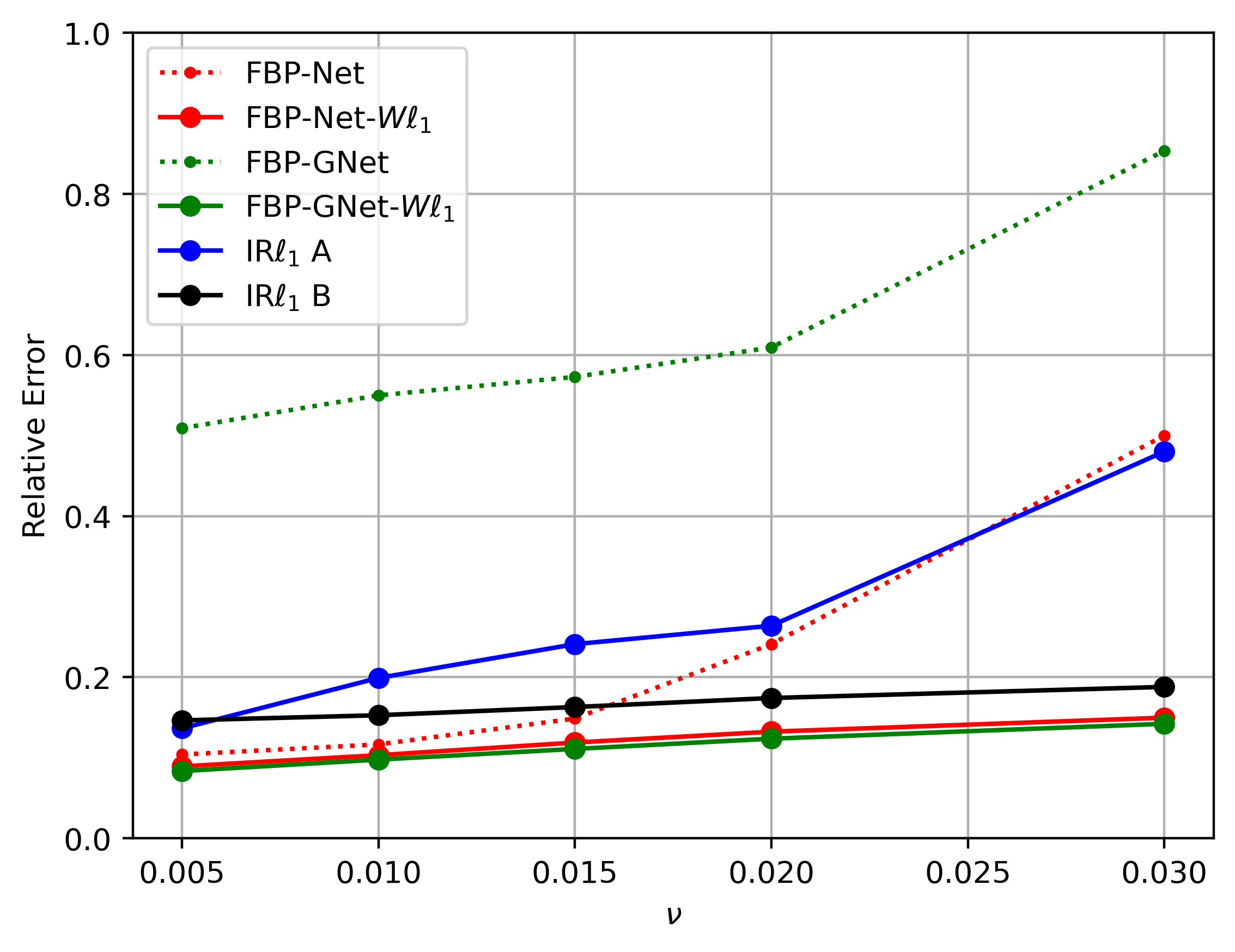}
  \caption{Plots of the relative errors of the solutions computed for the Mayo test image depicted in Figure \ref{fig:GT}. In both cases, the networks have been trained on sinograms affected by noise with $\nu=0.005$ for the $\mathcal{G}_{45}$ geometry. On the left: the RE plot over iterations for the solutions by FBP-$W\ell_1$, FBP-Net-$W\ell_1$ and FBP-GNet-$W\ell_1$.
  On the right: plot of the RE metric for $\nu$ in the range $[0.005, 0.030]$.}
  \label{fig:MayoPlots}
\end{figure}

We extend the evaluation of our FBP-Net-W $\ell_1$ and FBP-GNet-W $\ell_1$ schemes by examining their performance when processing data affected by unexpected noise levels. Specifically, we utilize networks previously trained with $\nu = 0.005$ for G45 and test our approaches on data with $\nu$ values ranging from 0.005 to 0.030. To optimize the results, the regularization parameter $\lambda$ is adjusted to minimize the relative error (RE). This out-of-distribution testing is critical for assessing the reliability and robustness of the models.
Noise injection is often described as an adversarial statistical attack on a network \cite{stabilizing_deep_tomographic_reconstruction_A,stabilizing_deep_tomographic_reconstruction_B}, a practice widely adopted in the deep learning imaging community. In our previous investigations, we have highlighted the susceptibility of learned operators to unseen noise and demonstrated the potential risks associated with employing networks as black-box tools.
 \cite{green_post_processing, evangelista2023ambiguity, loli2023ctprintnet}. 

In the proposed scheme, the networks do not generate the final solution; instead, any potential hallucinations are mitigated by the subsequent variational model. This is demonstrated in the second image of Figure \ref{fig:MayoPlots}, which shows the trends in relative errors for images reconstructed by our methods, as well as by the $W\ell_1$ and IR$\ell_1$ under simulated noise levels of $\nu=0.010$, $\nu=0.015$, $\nu=0.020$ and $\nu=0.030$. As illustrated, the accuracy of outputs from the FBP-Net and FBP-GNet networks declines significantly (depicted by the red and green dotted lines, respectively), highlighting the instability of end-to-end neural networks and corroborating findings reported in previous studies \cite{green_post_processing, evangelista2023ambiguity}.
  However, the corresponding final reconstructions (shown by the red and green solid lines) do not degrade and instead exhibit the stability properties characteristic of regularized variational approaches, as observed in \cite{loli2023ctprintnet}. 

Finally, we examine several images presented in Figure \ref{fig:results_Mayo005}.

In Figure \ref{fig:results_Mayo005}, the first column displays the outputs of the FBP-Net, while the second column shows the resulting FBP-Net-$W\ell_1$ solutions. The third and fourth columns contain the outputs of the FBP-GNet and FBP-GNet-$W\ell_1$, respectively. The images in the first row are generated from simulations with $\nu=0.005$, corresponding to tests statistically consistent with the training samples. In contrast, the images in the last row are derived from simulations with higher unseen noise, characterized by $\nu=0.015$.\\
both networks exhibit difficulty in preserving structural details, with the presence of higher unseen noise significantly degrading output quality. This results in blurred and incomplete features, as well as the appearance of artifacted structures.
In contrast, the final reconstructions demonstrate significant improvements in contrast and edge sharpness, irrespective of the noise level. This outcome underscores the effectiveness of the proposed adaptive weighted model.
Finally, we highlight that FBP-GNet-$W\ell_1$ outperforms FBP-Net-$W\ell_1$, fewer noise artifacts and providing a clearer representation of fine structures, such as those within the lung, even under unseen noise levels. This demonstrates the robustness of the GNet-based initialization and its capacity to generalize effectively to out-of-distribution noise conditions.

\begin{figure}
    \centering
    \subfloat[FBP-Net]{
    \begin{tabular}{c}
         \includegraphics[width=0.18\linewidth]{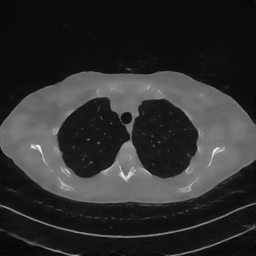}\\
         \includegraphics[trim={10mm 25mm 35mm 20mm},clip, width=0.18\linewidth]{imm_Mayo_005/xtilde_FBP-Net.png}\\
         \\
         \includegraphics[width=0.18\linewidth]{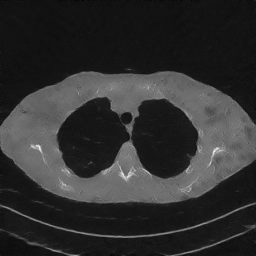}\\
         \includegraphics[trim={10mm 25mm 35mm 20mm},clip, width=0.18\linewidth]{imm_Mayo_005/xtilde_FBP-Net_0.015.png} 
    \end{tabular}
    }
    \subfloat[FBP-Net-$W\ell_1$]{
    \begin{tabular}{c}
         \includegraphics[width=0.18\linewidth]{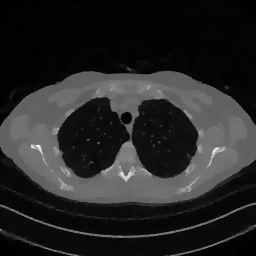}\\
         \includegraphics[trim={10mm 25mm 35mm 20mm},clip, width=0.18\linewidth]{imm_Mayo_005/sol_FBP-Net.png}\\
         \\
         \includegraphics[width=0.18\linewidth]{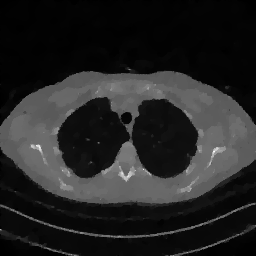}\\
         \includegraphics[trim={10mm 25mm 35mm 20mm},clip, width=0.18\linewidth]{imm_Mayo_005/sol_FBP-Net_0.015.png} 
    \end{tabular}
    } 
    \subfloat[FBP-GNet]{
    \begin{tabular}{c}
         \includegraphics[width=0.18\linewidth]{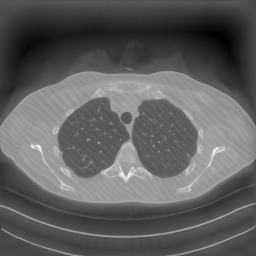}\\
         \includegraphics[trim={10mm 25mm 35mm 20mm},clip, width=0.18\linewidth]{imm_Mayo_005/xtilde_FBP-Net.png}\\
         \\
         \includegraphics[width=0.18\linewidth]{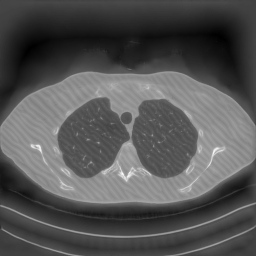}\\
         \includegraphics[trim={10mm 25mm 35mm 20mm},clip, width=0.18\linewidth]{imm_Mayo_005/xtilde_FBP-GNet_0.015.png} 
    \end{tabular}
    }
    \subfloat[FBP-GNet-$W\ell_1$]{
    \begin{tabular}{c}
         \includegraphics[width=0.18\linewidth]{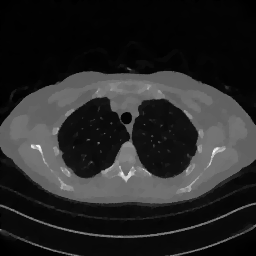}\\
         \includegraphics[trim={10mm 25mm 35mm 20mm},clip, width=0.18\linewidth]{imm_Mayo_005/sol_FBP-GNet.png}\\
         \\
         \includegraphics[width=0.18\linewidth]{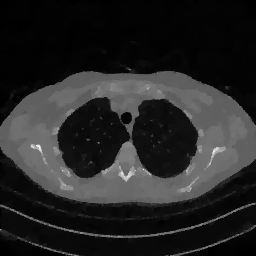}\\
         \includegraphics[trim={10mm 25mm 35mm 20mm},clip, width=0.18\linewidth]{imm_Mayo_005/sol_FBP-GNet_0.015.png} 
    \end{tabular}
    }
    \caption{Results on the Mayo real medical image for the $\mathcal{G}_{45}$ geometry, under different noise levels. Columns show FBP-Net (1st), FBP-Net-$W\ell_1$ (2nd), FBP-GNet (3rd), and FBP-GNet-$W\ell_1$ (4th). The first two rows use $\nu = 0.005$ (training-consistent noise), while the last two rows use higher unseen noise with $\nu = 0.015$. }
  \label{fig:results_Mayo005}
\end{figure}

\section{Conclusions \label{sec:concl}}

In this paper, we proposed an adaptive weighted Total Variation (TV) scheme for reconstructing CT images from few views. The adaptive weights are determined based on an intermediate image, which is expected to approximate the ground truth as closely as possible. We propose computing that intermediate image using a neural network that recovers the ground truth image, or its gradient, from a fast, coarse Filtered Back Projection reconstruction computed upon the sparse sinogram. The results of simulations conducted considering both synthetic and real datasets demonstrate that the proposed approach outperforms different state-of-the-art methods.\\
In conclusion, the proposed framework effectively balances noise reduction and detail preservation by introducing spatially adaptive regularization weights computed from neural network-generated approximations. In addition, theoretical analyses confirmed the well-posedness of the method, ensuring convergence and stability.

Future work may explore expanding the neural network's generalization across diverse tomographic setups and noise conditions, further integrating hybrid optimization and learning approaches to improve computational efficiency and reconstruction fidelity. In particular, given the limited availability of ground truth data in the context of tomography, we will consider ground truth images obtained by means of unsupervised or semi-supervised frameworks such as \cite{evangelista2023rising, ongie2020deep}. In addition, further extensions of this strategy to different sparsity regularizers will be explored, involving different image transforms such as wavelets or shearlets. 

\section*{References}
\bibliographystyle{iopart-num}
\bibliography{biblio}

\section*{Acknowledgement}
E. Loli Piccolomini, D. Evangelista and E. Morotti are supported by the ``Fondo per il Programma Nazionale di Ricerca e Progetti di Rilevante Interesse Nazionale (PRIN)'' 2022 project ``STILE: Sustainable Tomographic Imaging with Learning and rEgularization'', project  code: 20225STXSB, funded by the European Commission under the NextGeneration EU programme, project code MUR 20225STXSB, CUP J53D23003600006. \\
This work is partially supported by the Gruppo Nazionale per il Calcolo Scientifico (GNCS-INdAM) within the projects "Deep Variational Learning: un approccio combinato per la ricostruzione di immagini" and "MOdelli e MEtodi Numerici per il Trattamento delle Immagini (MOMENTI)", projects code: CUP E53C23001670001
A. Sebastiani is supported by the project “PNRR - Missione 4 “Istruzione e Ricerca” - Componente C2 Investimento 1.1 , PRIN “Advanced optimization METhods for automated central veIn Sign detection in multiple sclerosis from magneTic resonAnce imaging (AMETISTA)”, project code: P2022J9SNP, MUR D.D. financing decree n. 1379 of 1st September 2023 (CUP E53D23017980001) funded by the European Commission under the NextGeneration EU programme.

\begin{appendices}
\section{Proof of well-posedness of \texorpdfstring{$\Psi$-W$\ell_1$}{Psi-WL1} scheme}\label{app:proofs}
In this Section we prove some  lemmas stated  in Section \ref{sec:wellposedness}. 


    
    \subsection{Proof of Lemma \ref{lemma:coercivity}}
    \begin{proof}
        Let $\{ \x_k\}_{k \in \mathbb{N}} \subseteq \X$ any sequence such that $|| \x_k ||_2 \to \infty$ as $k \to \infty$. In particular, $||\x_k||_2 \geq 0$ for any $k \geq \bar{k}$. When it happens, Assumption \ref{assumption:kernels} implies that at least one of the following must hold:
        \begin{enumerate}
            \item $\x_k \in \ker(\K)^c$, which implies that $|| \K \x_k - \y^\delta ||_2^2 \geq f_1(|| \x_k ||_2)$, where $f_1(|| \x_k ||_2) \to \infty$ as $k \to \infty$,
            \item $\x_k \in \ker(\boldsymbol{W}_{\Psi, \delta} \D)^c$, which implies that $\mathcal{R}_{\Psi, \delta}(\x_k) \geq f_2(|| \x_k ||_2)$, where $f_2(|| \x_k ||_2) \to \infty$ as $k \to \infty$,
        \end{enumerate}
        as both $|| \K\x - \y^\delta ||_2^2$ and $\mathcal{R}_{\Psi, \delta}(\x)$ are coercive on $\ker(\K)^c$ and $\ker(\boldsymbol{W}_{\Psi, \delta}\D)^c$, respectively. Since $\mathcal{J}_{\Psi, \delta}(\x_k) \geq \min \{ || \K \x_k - \y^\delta ||_2^2, \mathcal{R}_{\Psi, \delta}(\x_k)\}$, it implies that:
        \begin{align}
            \mathcal{J}_{\Psi, \delta}(\x_k) \geq \min \{ f_1( || \x_k ||_2), f_2( || \x_k ||_2)\} \to \infty, \quad k \to \infty,
        \end{align}
        concluding the proof.
    \end{proof}




\subsection{Proof of Lemma \ref{lemma:TV_gradient}}\label{app:TV_gradient}

\begin{proof}
    The Total Variation can be rewritten as
    $TV(\x) = \sum_{i=1}^n\|\boldsymbol{U}_i \D\x \|_{2} $
    where the linear operator $\boldsymbol{U}_i\in\R^{2\times2n}$ is such that $\boldsymbol{U}_i\D\x=((\D_h\x)_i,(\D_v\x)_i)$ $\forall\ i=1,\ldots,n$.
    Recalling the computation properties of the subgradients \cite{beck2017first}, we derive
     \begin{equation*}    
        \partial TV(\x) = \sum_{i=1}^n\partial (\|\boldsymbol{U}_i\D\x \|_{2}).
     \end{equation*}
    Noticing that $\|\boldsymbol{U}_i\D\x\|_2=(\|\D\x\|)_i$ and focusing on a single term, we obtain
    \begin{equation*}
        \partial (\|\boldsymbol{U}_i\D\x \|_{2}) = \D^T\boldsymbol{U}_i^T \boldsymbol{v},
    \end{equation*}
    where $\boldsymbol{v}\in\R^2$ is defined as follows
    \begin{equation*}
        \begin{cases}
            \frac{\boldsymbol{U}_i\D\x}{| \D \x |_i} & \mbox{ if } (| \D \x |)_i \neq0,\\
            (c,c) & \mbox{ if } (| \D \x |)_i =0,
        \end{cases}
    \end{equation*}
    with $c\in[-1,1]$.
    This concludes the proof since $\boldsymbol{w}=\boldsymbol{U}_i^T\boldsymbol{v}\in\R^{2n}$ is such that
    \begin{equation*}
        (\boldsymbol{w})_j = 
        \begin{cases}
            (\boldsymbol{v})_1 & \mbox{if } j=i,\\
            (\boldsymbol{v})_2 & \mbox{if } j=n+1,\\
            0 & \mbox{otherwise}.
        \end{cases}
    \end{equation*}
\end{proof}

\subsection{Proof of Lemma \ref{lemma:J_properties_on_M}}\label{app:J_properties_on_M}

\begin{proof}
    By simple algebraic manipulations, it is not hard to show that for any $\x_1, \x_2 \in \R^n$, it holds:
    \begin{align}
        \mathcal{J}_{\Psi, \delta}\left(\frac{\x_1 + \x_2}{2} \right) \leq \frac{1}{2} \left( \mathcal{J}_{\Psi, \delta}(\x_1) + \mathcal{J}_{\Psi, \delta}(\x_2) \right) - \frac{1}{8} || \K\x_1 - \K\x_2 ||_2^2.
    \end{align}
    
    Now, let $\x_1, \x_2 \in \mathcal{M}$. By convexity of $\mathcal{J}_{\Psi, \delta}(\x)$ and of $\X$, necessarily $\mathcal{J}_{\Psi, \delta}(\x_1) = \mathcal{J}_{\Psi, \delta}(\x_2) = \mathcal{J}_{\Psi, \delta} (\frac{\x_1 + \x_2}{2}) = \mathcal{J}_{\Psi, \delta}^*$. Therefore:
    \begin{align}
    \begin{split}
        &\mathcal{J}_{\Psi, \delta}^* \leq \frac{1}{2} (\mathcal{J}_{\Psi, \delta}^* + \mathcal{J}_{\Psi, \delta}^*) - \frac{1}{8} || \K\x_1 - \K\x_2 ||_2^2 \\ \iff & \mathcal{J}_{\Psi, \delta}^* \leq \mathcal{J}_{\Psi, \delta}^* - \frac{1}{8} || \K\x_1 - \K\x_2 ||_2^2 \\ \iff & || \K\x_1 - \K\x_2 ||_2^2 \leq 0 \\ \iff & \K\x_1 = \K\x_2,
    \end{split}
    \end{align}
    which proves that $\x_1 - \x_2 \in \ker (\K)$.

    Note that this causes $|| \K\x_1 - \y^\delta ||_2^2$ to be equal to $|| \K\x_2 - \y^\delta||_2^2$ which, together with the observation that $\mathcal{J}_{\Psi, \delta}(\x_1) = \mathcal{J}_{\Psi, \delta}(\x_2)$, implies $\mathcal{R}_{\Psi, \delta}(\x_1) = \mathcal{R}(\x_2)$.
\end{proof}

\subsection{Proof of Lemma \ref{lemma:delta_1_2_inequality}}\label{app:delta_1_2_inequality}

\begin{proof}
    It is known (see e.g. \cite{scherzer2009variational}) that for any $p \geq 1$, and for any $\x_1, \x_2 \in \X$, it holds:
    \begin{align}\label{eq:norm_inequality}
        || \x_1 + \x_2 ||_p^p \leq 2^{p-1} \left( || \x_1 ||_p^p + || \x_2 ||_p^p \right).
    \end{align}

    Applying this inequality on $|| \K\x - \y^{\delta_1} ||_2^2 = || (\K \x - \y^{\delta_2}) + (\y^{\delta_2} - \y^{\delta_1}) ||_2^2$ with $p=2$ leads:
    \begin{align}
        || \K\x - \y^{\delta_1} ||_2^2 \leq 2|| \K\x - \y^{\delta_2} ||_2^2 + 2 || \y^{\delta_1} - \y^{\delta_2} ||_2^2.
    \end{align}

    Similarly, considering $|| \w(\Psi(\y^{\delta_1}) \odot |\D \x| \> ||_1 = || \w(\Psi(\y^{\delta_2}) \odot |\D \x| + (\w(\Psi(\y^{\delta_1})) - \w(\Psi(\y^{\delta_2}))) \odot |\D \x| \> ||_1$ with $p=1$:
    \begin{align}
        || \w(\Psi(\y^{\delta_1}) \odot |\D \x| \> ||_1 \leq || \w(\Psi(\y^{\delta_2}) \odot |\D \x| \> ||_1 + ||(\w(\Psi(\y^{\delta_1})) - \w(\Psi(\y^{\delta_2}))) \odot |\D \x| \> ||_1.
    \end{align}

    Moreover, note that for any weights vector $\w\in\R^n$, 
    \begin{align}
        || \w \odot | \D \x | \> ||_1 = \sum_{i=1}^n  \w_i \left( |\D \x | \right)_i \leq \sum_{i=1}^n \w_i \sum_{i=1}^n \left( |\D \x | \right)_i = || \w ||_1 || \> | \D \x | \> ||_1,
    \end{align}
    where we used that both $\w_i$ and $\left( |\D \x | \right)_i$ are non-negative for any index $i$. Therefore:
    \begin{align}
    \begin{split}
        \mathcal{J}_{\Psi, \delta_1}(\x) &= || \K\x - \y^{\delta_1} ||_2^2 + \lambda || \w(\Psi(\y^{\delta_1})) \odot | \D \x | \> ||_1 \\ &\leq 2|| \K\x - \y^{\delta_2} ||_2^2 + 2 || \y^{\delta_1} - \y^{\delta_2} ||_2^2 + \lambda || \w(\Psi(\y^{\delta_2})) \odot |\D \x| \> ||_1 \\ & \hspace{14em}+ \lambda ||(\w(\Psi(\y^{\delta_1}) - \w(\Psi(\y^{\delta_2}))) \odot |\D \x| \> ||_1 \\& \leq 2|| \K\x - \y^{\delta_2} ||_2^2 + 2 \lambda || \w(\Psi(\y^{\delta_2})) \odot |\D \x| \> ||_1 + 2 || \y^{\delta_1} - \y^{\delta_2} ||_2^2 \\ & \hspace{14em} + \lambda ||(\w(\Psi(\y^{\delta_1}) - \w(\Psi(\y^{\delta_2})) \odot |\D \x| \> ||_1 \\& = 2\mathcal{J}_{\Psi, \delta_2}(\x) + 2 || \y^{\delta_1} - \y^{\delta_2} ||_2^2 + \lambda ||(\w(\Psi(\y^{\delta_1})) - \w(\Psi(\y^{\delta_2}))) \odot |\D \x| \> ||_1 \\ &\leq 2\mathcal{J}_{\Psi, \delta_2}(\x) + 2 || \y^{\delta_1} - \y^{\delta_2} ||_2^2 + \lambda ||\w(\Psi(\y^{\delta_1})) - \w(\Psi(\y^{\delta_2}))||_1 || \> |\D \x| \> ||_1,
    \end{split}
    \end{align}
    concluding the proof.
\end{proof}



\subsection{Proof of Lemma \ref{lemma:minimizers_bounded}}\label{app:minimizers_bounded}

\begin{proof}
    By contradiction, assume $\{ \x^*_{\Psi, \delta_k} \}_{k \in \mathbb{N}}$ is unbounded, i.e. $|| \x^*_{\Psi, \delta_k} ||_2 \to \infty$ as $k \to \infty$. Since $\mathcal{J}_{\Psi, \delta_k}(\x)$ is coercive as proved in Lemma \ref{lemma:coercivity}, then $\mathcal{J}_{\Psi, \delta_k} (\x^*_{\Psi, \delta_k}) \to \infty$ as $k \to \infty$. 

    Let $\bar{\x} \in \X$ such that $\bar{\x} \in \ker (\D)$. Clearly $\mathcal{R}_{\Psi, \delta_k}(\bar{\x}) = 0$ for any $k \in \mathbb{N}$. By the same reasoning used in the proof of Lemma \ref{lemma:delta_1_2_inequality}, it follows:
    \begin{align}
        \mathcal{J}_{\Psi, \delta_k}(\bar{x}) &= || \K \bar{\x} - \y^{\delta_k} ||_2^2 \leq 2|| \K \bar{\x} - \y^0 ||_2^2 + 2|| \y^{\delta_k} - \y^0 ||_2^2 \\&\leq 2|| \K \bar{\x} - \y^0 ||_2^2 + 2\delta_k^2 \to 2|| \K \bar{\x} - \y^0 ||_2^2 < \infty,
    \end{align}

    This implies that there exists a $\bar k\in \mathbb{N}$ such that $\mathcal{J}_{\Psi, \delta_k}(\bar{x}) \leq \mathcal{J}_{\Psi, \delta_k}(\x^*_{\Psi, \delta_k})$ for every $k>\bar k$, which is a contradiction since $\x^*_{\Psi, \delta_k}$ is a minimizer of $\mathcal{J}_{\Psi, \delta_k}(\x)$.
    
\end{proof}

\subsection{Proof of Lemma \ref{lemma:psi_1_2_inequality}}\label{app:psi_1_2_inequality}

\begin{proof}
    The proof is similar to Lemma \ref{lemma:delta_1_2_inequality}. Applying the inequality \ref{eq:norm_inequality} on $|| \w(\Psi_1(\y^{\delta}) \odot |\D \x| \> ||_1 = || \w(\Psi_2(\y^{\delta}) \odot |\D \x| + (\w(\Psi_1(\y^{\delta}) - \w(\Psi_2(\y^{\delta})) \odot |\D \x| \> ||_1$ with $p=1$, it follows:
    \begin{align}
        || \w(\Psi_1(\y^{\delta}) \odot |\D \x| \> ||_1 \leq || \w(\Psi_2(\y^{\delta}) \odot |\D \x| \> ||_1 + ||(\w(\Psi_1(\y^{\delta}) - \w(\Psi_2(\y^{\delta})) \odot |\D \x| \> ||_1.
    \end{align}

    Therefore:
    \begin{align}
    \begin{split}
        \mathcal{J}_{\Psi_1, \delta}(\x) &= || \K\x - \y^{\delta} ||_2^2 + \lambda || \w(\Psi_1(\y^{\delta})) \odot | \D \x | \> ||_1 \\ &\leq || \K\x - \y^{\delta} ||_2^2 + \lambda || \w(\Psi_2(\y^{\delta}) \odot |\D \x| \> ||_1 + \lambda ||\left(\w(\Psi_1(\y^{\delta})) - \w(\Psi_2(\y^{\delta}))\right) \odot |\D \x| \> ||_1\\& = \mathcal{J}_{\Psi_2, \delta}(\x) + \lambda ||\left(\w(\Psi_1(\y^{\delta})) - \w(\Psi_2(\y^{\delta}))\right) \odot |\D \x| \> ||_1 \\ &\leq \mathcal{J}_{\Psi, \delta_2}(\x) + \lambda ||\w(\Psi_1(\y^{\delta})) - \w(\Psi_2(\y^{\delta}))||_1 || \> |\D \x| \> ||_1,
    \end{split}
    \end{align}
    concluding the proof.
\end{proof}

The next lemma proves the boundedness of the sequence $\{ \x^*_{\Psi_k, \delta} \}_{k \in \mathbb{N}}$ of the minimizers of $\mathcal{J}_{\Psi_k, \delta}(\x)$.


\subsection{Proof of Lemma \ref{lemma:minimizers_bounded_2}}\label{app:minimizers_bounded_2}

\begin{proof}
    By contradiction, assume $\{ \x^*_{\Psi_k, \delta} \}_{k \in \mathbb{N}}$ is unbounded, i.e. $|| \x^*_{\Psi_k, \delta} ||_2 \to \infty$ as $k \to \infty$. Since $\mathcal{J}_{\Psi_k, \delta}(\x)$ is coercive as proved in Lemma \ref{lemma:coercivity}, then $\mathcal{J}_{\Psi_k, \delta} (\x^*_{\Psi_k, \delta}) \to \infty$ as $k \to \infty$. 

    Let $\bar{\x} \in \X$ such that $\bar{\x} \in \ker (\D)$. It follows:
    \begin{align}
        \mathcal{J}_{\Psi_k, \delta}(\bar{x}) &= || \K \bar{\x} - \y^{\delta} ||_2^2 < \infty,
    \end{align}

    which implies that $\mathcal{J}_{\Psi_k, \delta}(\bar{x}) \leq \mathcal{J}_{\Psi_k, \delta}(\x^*_{\Psi_k, \delta})$ for sufficiently large values of $k \in \mathbb{N}$, which is a contradiction as $\x^*_{\Psi_k, \delta}$ is a minimizer of $\mathcal{J}_{\Psi_k, \delta}(\x)$.
\end{proof}
\end{appendices}

\end{document}